\documentclass[reqno,a4paper]{amsart}

\usepackage{amsmath,amssymb,amsfonts,amsthm,amscd}
\usepackage{geometry}
\usepackage{mathrsfs}
\usepackage{graphicx}
\usepackage{xcolor}
\usepackage{hyperref}
\usepackage[square,numbers]{natbib}
\hypersetup{
	colorlinks=true,
	linkcolor=blue,
	citecolor=blue,
	urlcolor=orange
}

\numberwithin{equation}{section}
\allowdisplaybreaks[3]

\newtheorem{theorem}{Theorem}[section]
\newtheorem{proposition}[theorem]{Proposition}
\newtheorem{lemma}[theorem]{Lemma}
\newtheorem{corollary}[theorem]{Corollary}

\theoremstyle{definition}
\newtheorem{definition}[theorem]{Definition}

\title{
The exterior Dirichlet problem for special Lagrangian equations
}

	\author{Yu Lei}
\address{School of Mathematics and Center for Nonlinear Studies, Northwest University, Xi'an, 710127, PR China}
\email{leiyu@stumail.nwu.edu.cn}

\author{Zhisu Li}
\address{School of Mathematics and Center for Nonlinear Studies, Northwest University, Xi'an, 710127, PR China}
\email{lizhisu@nwu.edu.cn}
\date{\today}

\keywords{special Lagrangian equation, exterior problem, viscosity solution,
	smooth solution, subsolution.}

\subjclass[2020]{35J15, 35J25, 35J60, 53C38}

\begin{document}

\begin{abstract}
We establish the existence and uniqueness theorem
for the exterior Dirichlet problem for the special Lagrangian equation 
with prescribed asymptotic behavior at infinity,
in both the viscosity setting for all the phases
and classical setting for the critical and supercritical phases.
These results generalize previous work by the second author
by removing restrictive assumptions on the asymptotic matrix 
and improving the decay rate to the order $2-n$. 
We also solve the interior Dirichlet problem for the critical special Lagrangian equation
and, as applications, all the above-mentioned corresponding problems
for the three dimensional quadratic Hessian equation  without any admissibility condition.
\end{abstract}
\maketitle
	\setcounter{tocdepth}{1}
		\tableofcontents
			
	\section{Introduction}
	
Let $\Omega \subset \mathbb{R}^n$ be a bounded domain with $n \geq 3$. In this paper, we investigate the exterior Dirichlet problem for the special Lagrangian equation
\begin{equation}\label{eqsle}
	\begin{cases}
		\displaystyle \sum_{i=1}^n \arctan \lambda_i(D^2u) = \Theta, & x \in \mathbb{R}^n \setminus \overline{\Omega}, \\[1.2ex]
		u = \varphi, & x \in \partial \Omega,
	\end{cases}
\end{equation}
where the solution is required to possess the quadratic asymptotic 
\begin{equation}\label{2-n}
	u(x) = \frac{1}{2}x^T A x + b \cdot x + c + O(|x|^{2-n}), \qquad |x| \to \infty,
\end{equation}
where $\Theta$ is a constant satisfying
\begin{equation}
	-\frac{n\pi}{2} < \Theta < \frac{n\pi}{2},
\end{equation}
and  $\lambda_i(D^2u)$ denote the eigenvalues of the Hessian matrix $D^2u$. 

In view of the quadratic asymptotics \eqref{2-n}, it is natural to impose on the asymptotic matrix $A$ the structural condition
\begin{equation}
	A \in \mathcal{A}_\Theta := \left\{ A \in \mathrm{Sym}(n) : \sum_{i=1}^n \arctan \lambda_i(A) = \Theta \right\},
\end{equation}
with $b \in \mathbb{R}^n$ and $c \in \mathbb{R}$.

The study of exterior Dirichlet problems for fully nonlinear elliptic equations with prescribed quadratic asymptotics originates from the seminal work of Caffarelli and Li \cite{CL03} on the Monge--Amp\`ere equation. They proved that if $u$ is a convex viscosity solution of
$$
\det(D^2u) = 1
$$
outside a bounded domain, then $u$ must have the asymptotic expansion
$$
u(x) = \frac{1}{2}x^T A x + b \cdot x + c + O(|x|^{2-n}), \qquad |x| \to \infty,
$$
for some $A \in \mathrm{Sym}(n)$, $b \in \mathbb{R}^n$, $c \in \mathbb{R}$. Moreover, they established existence and uniqueness of solutions to the exterior Dirichlet problem with prescribed such asymptotics.

For Hessian equations $\sigma_k(D^2u) = 1$, the exterior problem was first studied by Dai and Bao \cite{DB11, Dai11} in the case where the asymptotic matrix is a multiple of the identity. Subsequently, Bao, Li, and Li \cite{BLL14} introduced generalized symmetric functions and treated the case of positive definite asymptotic matrices $A > 0$. Their work initiated a systematic study of exterior problems for Hessian-type equations. Later developments have included  extensions to Hessian quotient equations \cite{LL18, DBL23}, special Lagrangian equations \cite{Li19}, and other fully nonlinear elliptic equations \cite{JLL21, LC24}.

Recently, two significant advances have been made in the Hessian equation theory. Bao and Jiang \cite{BJ26} studied the exterior Dirichlet problem for Hessian equations 
with $k$-admissible asymptotic matrices, allowing the asymptotic matrix to be merely $k$-admissible rather than positive definite. They introduced the key idea that the metric at infinity should be governed by the linearized operator at the asymptotic matrix, rather than by the asymptotic matrix itself. In a complementary direction, Li and Xiao \cite{LX26} proved the existence of smooth solutions to the exterior Hessian problem on non-convex rings, assuming only that the domain is strictly $k-1$ convex and star-shaped, thereby relaxing the strict convexity assumption that was essential in earlier works.

The special Lagrangian equation
\begin{equation}
	\sum_{i=1}^n \arctan \lambda_i(D^2u) = \Theta, \label{eq:SLE}
\end{equation}
was introduced by Harvey and Lawson \cite{HL82} in 1982 in their foundational work on calibrated geometries. A Lagrangian graph $(x, Du) \subset \mathbb{R}^n \times \mathbb{R}^n$ is called special if and only if it is a minimal submanifold, and \eqref{eq:SLE} is precisely the governing equation for such graphs.

The equation has been extensively studied. Yuan proved a Bernstein-type result for global semiconvex solutions \cite{Yua02}, and later established the convexity of the level sets for the critical and supercritical phases \cite{Y06}. Interior Hessian estimates were first obtained by Warren and Yuan \cite{WY09} for convex solutions, and subsequently extended by Warren and Yuan \cite{WY10} and Wang and Yuan \cite{WY14} to the critical and supercritical phases without requiring convexity.

For the Dirichlet problem on bounded domains, Caffarelli, Nirenberg, and Spruck \cite{CNS85} solved \eqref{eq:SLE} for $\Theta = (n-2)\pi/2$ when $n$ is even, and for $\Theta = (n-1)\pi/2$ when $n$ is odd, under suitable geometric conditions on the domain. In the supercritical regime $\Theta > (n-2)\pi/2$, Collins, Picard, and Wu \cite{CPW17} established the existence and uniqueness of smooth solutions, relying crucially on the concavity of the Lagrangian phase operator. The existence and uniqueness of viscosity solutions for the Dirichlet problem in the general case was later established by Harvey and Lawson \cite{HL09}.

For the Dirichlet problem on bounded domains, the special Lagrangian equation is now known to admit a unique viscosity solution for all phases, thanks to the works of Harvey and Lawson \cite{HL09} and Bhattacharya \cite{B24}. In the supercritical regime $\Theta > (n-2)\pi/2$, smooth solutions were established by Collins, Picard, and Wu \cite{CPW17}. The critical case $\Theta = (n-2)\pi/2$ on bounded domains remained open.

A natural question is whether the viscosity theory of Harvey--Lawson \cite{HL09} for all phases and the smooth theory of Collins--Picard--Wu \cite{CPW17} for the supercritical regime can be extended to the exterior Dirichlet problem with prescribed quadratic asymptotics, especially in the critical case.

The present work draws inspiration from the recent contributions of Bao and Jiang \cite{BJ26} on exterior Dirichlet problems for Hessian equations, from the results of Li and Xiao \cite{LX26} on exterior Hessian problems on non-convex rings, and from the work of Fu, Yau, and Zhang \cite{FYZ26} on the critical LYZ equation in K\"{a}hler geometry.

For the special Lagrangian equation, the exterior Dirichlet problem was first solved by Li \cite{Li19}, under certain restrictive assumptions on the asymptotic matrix. The present paper continues this line of research. Our main contributions are threefold. First, we remove the restrictive assumptions on the asymptotic matrix $A$, allowing it to lie anywhere on the phase level set $\mathcal{A}_\Theta$ without any positive definiteness or admissibility condition. Second, we improve the decay rate to the order $2-n$, which is the natural decay rate for the fundamental solution of the linearized operator. Third, we solve the Dirichlet problem in the critical case $\Theta = (n-2)\pi/2$, which was not covered in previous works. As applications, our results solve the $3$-dimensional $2$-Hessian equation $\sigma_2 = 1$ without imposing any admissibility condition.

\begin{theorem}[Viscosity exterior solution]
	\label{thm:viscosity}
	Let $n \geq 3$. Suppose that $\Omega \subset \mathbb{R}^n$ is a bounded uniformly convex $C^{1,1}$ domain, and $\varphi \in C^0(\partial \Omega)$ is uniformly semiconvex with respect to $\partial \Omega$. Then, for any given $A \in \mathcal{A}_\Theta$ with
	$$
	-\frac{n\pi}{2} < \Theta < \frac{n\pi}{2},
	$$
	and  given $b \in \mathbb{R}^n$, there exists
	$c_* = c_*(n,\Theta,A,b,\Omega,\varphi) > 0$
	such that for every $c > c_*$, the exterior Dirichlet problem \eqref{eqsle}
	admits a unique viscosity solution $u \in C^0(\mathbb{R}^n\setminus\Omega)$ satisfying \eqref{2-n}.
\end{theorem}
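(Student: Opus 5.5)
We plan to use Perron's method, in the form of Harvey--Lawson's viscosity theory: the maximum of subsolutions is again a subsolution and comparison holds on bounded domains for all phases. The problem will be reduced to building two barriers, a global subsolution $\underline u$ and a supersolution $\overline u$, with $\underline u\le \overline u$, both equal to $\varphi$ on $\partial\Omega$ or squeezing it there, and both with the asymptotics \eqref{2-n}.

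\textbf{Normalization.} Set $F(M)=\sum\arctan\lambda_i(M)$ and $L=DF(A)$, so $L_{ij}=((I+A^2)^{-1})_{ij}$, which is positive definite. After a linear change of variables $x=L^{1/2}y$ and subtraction of $b\cdot x$, the linearized operator at $A$ becomes the Laplacian. In the new variables the natural radial quantity is $r=(x^TL^{-1}x)^{1/2}$. This is the Bao--Jiang idea: measure the metric at infinity through the linearization rather than through $A$ itself. Orthogonal invariance of $F$ lets us also diagonalize $A$, so we take $A=\mathrm{diag}(\lambda_i)$ and $L=\mathrm{diag}((1+\lambda_i^2)^{-1})$.

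\textbf{Subsolutions.} We use the family
\[
w_\beta(x)=\tfrac12x^TAx+b\cdot x+c-\beta\,\phi(r),\qquad \phi(r)=\int_r^\infty s^{1-n}\,ds\sim \frac{r^{2-n}}{n-2},
\]
together with generalized-symmetric radial ODE subsolutions $\tfrac12x^TAx+b\cdot x+\int_{1}^{r}\psi(s)\,ds$ whose Hessians exceed $A$ in the appropriate direction. The point is that $-\phi(r)$ gives $D^2\phi$ with $L$-trace zero, but it has one direction of positive curvature of size $\sim r^{-n}$. Since $F$ is not concave for low phases, the main technical step is to show
\[
F(A+\beta D^2(-\phi))\ge\Theta
\]
for suitable radial profiles. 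To do this we expand to second order, $F(A+H)=\Theta+\mathrm{tr}(LH)+O(|H|^2)$. First-order ellipticity will not suffice because the first-order term vanishes. We therefore perturb the profile to $\phi_\epsilon$ with exponent $2-n+\epsilon$ near a large scale and exact $2-n$ at infinity, so that $\mathrm{tr}(LD^2\phi_\epsilon)$ has a strict sign dominating the $O(\beta^2r^{-2n})$ error for $r\ge R_0$. Alternatively, we exploit that $F$ is monotone along the eigenvalue ordering and solve an ODE for $\psi$ as in \cite{Li19,BLL14}. I expect this to be the main obstacle, since no positivity or admissibility of $A$ is assumed and the error must be handled for both signs of the eigenvalues.

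\textbf{Boundary barriers.} For $\xi\in\partial\Omega$, uniform convexity and semiconvexity of $\varphi$ give quadratics
\[
q_\xi(x)=\varphi(\xi)+p_\xi\cdot(x-\xi)+\tfrac K2\bigl(|x-\xi|^2\text{-type terms}\bigr)-K\,\mathrm{dist}
\]
with a very negative Hessian in the normal direction. Then $F(D^2q_\xi)$ is close to $-n\pi/2<\Theta$, so $q_\xi$ is a subsolution, and $q_\xi\le\varphi$ on $\partial\Omega$ with equality at $\xi$. Set $\underline u=\max\{\sup_\xi q_\xi,\ w\}$, where $w$ is the subsolution from the previous step with $c$ large. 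Taking $c>c_*$ makes $w\ge\sup_\xi q_\xi$ outside a large ball while $w\le\varphi$ on $\partial\Omega$, achieved by taking $\beta$ large. Symmetrically, a supersolution is $\overline u=\tfrac12x^TAx+b\cdot x+c+\beta'\phi(r)$ modified near $\Omega$ by $\min_\xi$ of concave-type quadratics tangent to $\varphi$ from above. Here we use that $\varphi$ is continuous and that $D^2$ very positive gives $F$ near $n\pi/2$.

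\textbf{Perron and uniqueness.} Define $u$ as the supremum of subsolutions $v$ with $\underline u\le v\le\overline u$. Harvey--Lawson's theory gives that $u$ is a viscosity solution. The boundary barriers give $u=\varphi$ on $\partial\Omega$, and squeezing gives $u=\tfrac12x^TAx+b\cdot x+c+O(r^{2-n})$. For uniqueness, let $u_1,u_2$ be two solutions. Then $u_1-u_2=O(|x|^{2-n})\to0$, so for each $\delta>0$, comparison on $B_R\setminus\overline\Omega$ with $R$ large gives $|u_1-u_2|\le\delta$. Letting $\delta\to0$ yields $u_1=u_2$.
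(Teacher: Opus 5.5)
Your normalization through $L=DF(A)$ (the paper's $G$ and $\rho$) and your uniqueness argument are fine. The barrier construction, however, carries the whole proof, and it does not work as written.

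First, the boundary quadratics have the wrong sign. By your own convention a subsolution needs $F(D^2q)\ge\Theta$, so a $q_\xi$ with $F(D^2q_\xi)$ near $-n\pi/2$ is a strict \emph{super}solution. Likewise, your upper barriers with $F$ near $n\pi/2$ are subsolutions. No extreme Hessian is needed. The paper takes $D^2q_\xi=M_\varepsilon=A+\varepsilon G^{-1}$, so that $F(M_\varepsilon)=\Theta+n\varepsilon+O(\varepsilon^2)>\Theta$. It then gets $q_\xi<\varphi$ on $\partial\Omega\setminus\{\xi\}$ from the linear term $p_\xi=\tau_\xi+N\nu(\xi)$ alone, using $(x-\xi)\cdot\nu(\xi)\le-\kappa_\Omega|x-\xi|^2$ near $\xi$ and $\le-\xi_\Omega$ away from it.

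Second, once the $q_\xi$ really are subsolutions, a global $\max\{\sup_\xi q_\xi,w\}$ cannot have the asymptotics \eqref{2-n}. You would need $q_\xi\le Q_c$ near infinity, which forces $D^2q_\xi\le A$. Strict monotonicity of $F$ then gives $D^2q_\xi=A$, hence $p_\xi=A\xi$ and $q_\xi=Q_0+\mathrm{const}$. Such functions cannot touch $\varphi$ from below at every $\xi$ unless $\varphi-Q_0$ is constant on $\partial\Omega$. So ``$w\ge\sup_\xi q_\xi$ outside a large ball'' is impossible, and you must glue on a bounded annulus. Use $w_\varepsilon=\sup_\xi q_\xi$ inside $E_R$, the far-field function outside $E_{2R}$, and their maximum in between. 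This requires the far-field function to be $\le w_\varepsilon$ on $\partial E_R$ and $\ge w_\varepsilon$ on $\partial E_{2R}$.

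Making both interface inequalities hold at once is the quantitative core. Taking $\beta$ large so that $w\le\varphi$ on $\partial\Omega$ does not address it, since the far-field function need not be a subsolution near $\partial\Omega$ at all. The paper sets $R_c=\tau c$ and $\varepsilon_c=R_c^{-n}$, so that $|w_{\varepsilon_c}-Q_0|\le C_0(1+\rho)+\tfrac{\varepsilon_c}{2}\rho^2$ is essentially linear on $E_{2R_c}$. It then checks that the admissible interval $[B_c^-,B_c^+]$ for the coefficient of $\rho^{2-n}$ is nonempty: its length is at least $R_c^{n-2}$ times $(2^{n-2}-1)c-C(1+\tau c)-C'$, which is positive for $\tau$ small and $c$ large. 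This is where $c>c_*$ comes from.

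Third, the far-field subsolution with decay exactly $2-n$, which you correctly flag as the main obstacle, is left unresolved. Raising the exponent to $2-n+\epsilon$ loses the rate. Wherever the exponent is exactly $2-n$, the $L$-trace vanishes and nothing controls the $O(\beta^2\rho^{-2n})$ remainder. For $\Theta\ge(n-2)\pi/2$, convexity of $\{F\ge\Theta\}$ even makes that second-order term nonpositive on $L$-traceless directions, so $Q_c-\beta\rho^{2-n}$ alone is not a subsolution.

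The missing idea is a faster-decaying correction with positive $L_A$-trace: $u_\infty^-=Q_c-B\rho^{2-n}+\delta\rho^{-n}$, where $L_A(\rho^{-n})=2n\rho^{-n-2}$. Because $n+2<2n$ for $n\ge3$, the gain $2n\delta\rho^{-n-2}$ beats $CB^2\rho^{-2n}$ as soon as $\rho^{n-2}\ge CB^2/\delta$, and the asymptotics are unchanged. Any $\rho^\alpha$ with $2-2n<\alpha<2-n$ would also work. Scaling $\delta_c=\Lambda c^2R_c^{n-2}$ against $B_c\sim cR_c^{n-2}$, with $\Lambda$ large, makes this valid from $\rho=R_c$ onward, which is what the annulus gluing needs. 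The radial ODE route of Li and of Bao--Li--Li, which you offer as an alternative, is what imposed the restrictions on $A$ in earlier work.

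Finally, no elaborate supersolution is needed. $Q_c$ is an exact solution, dominates $\underline u_c$ for $c$ large, and satisfies $Q_c-\underline u_c\to0$ at infinity, so Lemma \ref{lem:perron} applies directly.
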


\begin{theorem}[Smooth exterior solutions]
	\label{thm:smooth}
	Let $n \geq 3$. Suppose that $\Omega\subset \mathbb{R}^n$ is a smooth bounded domain with strictly convex boundary, and $\varphi \in C^\infty(\partial \Omega)$. Then, for any given $A \in \mathcal{A}_\Theta$ with
	$$
	\frac{(n-2)\pi}{2} \leq \Theta < \frac{n\pi}{2},
	$$
	and any given $b \in \mathbb{R}^n$, there exists a constant
	$c_* = c_*(n,\Theta,A,b,\Omega,\|\varphi\|_{C^2(\partial \Omega)}) > 0$
	such that for every $c > c_*$, the exterior Dirichlet problem \eqref{eqsle} admits a unique smooth solution $u \in C^\infty(\mathbb{R}^n\setminus\Omega)$ satisfying \eqref{2-n}.
\end{theorem}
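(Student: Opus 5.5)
The plan is to prove existence by continuity, or by exhaustion on annuli, using a sub/supersolution pair whose quadratic asymptotics are fixed by the linearization at $A$. Uniqueness then follows from the comparison principle. The main tool at infinity is the linearized operator
$$L_A w = \sum_{i,j} a^{ij} D_{ij} w,\qquad (a^{ij}) = (I + A^2)^{-1}.$$
This matrix is positive definite for every $A\in\mathcal{A}_\Theta$, with no admissibility condition on $A$.

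\emph{Barriers and comparison.} After an orthogonal change of coordinates and the linear change $x = (I+A^2)^{1/2} y$, the operator $L_A$ becomes the Laplacian. The function $|(I+A^2)^{-1/2}x|^{2-n}$ is then $L_A$-harmonic away from the origin. I will build radially symmetric (in these coordinates) perturbations
$$\underline{w}(x) = \tfrac12 x^T A x + b\cdot x + c - \psi(r),\qquad r = |(I+A^2)^{-1/2}x|,$$
with $\psi(r)\sim \beta r^{2-n}$ at infinity, chosen by an ODE so that $\sum \arctan\lambda_i(D^2\underline w)\ge \Theta$. The key is the second-order expansion of the phase at $A$, where the quadratic error is dominated by the first-order gain because $D^2\psi = O(r^{-n})$. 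Any $C^{1,1}$ barrier in a compact region can be corrected by taking the maximum with local quadratic subsolutions at each boundary point $\xi\in\partial\Omega$. These local subsolutions use uniform convexity of $\Omega$ and semiconvexity of $\varphi$: the strict convexity term $K|x-\xi|^2$ lies in the subsolution cone after adding a large multiple of the defining function. This yields a subsolution $\underline u$ with $\underline u=\varphi$ on $\partial\Omega$ and $\underline u = \tfrac12 x^TAx+b\cdot x + c + O(|x|^{2-n})$ for all $c>c_*$. The threshold $c_*$ is what lets the global max of the two pieces agree with the asymptotic branch outside a large ball. The function $\overline u = \tfrac12 x^TAx + b\cdot x + c$ (plus a harmless correction near $\partial\Omega$) serves as the supersolution.

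\emph{Viscosity existence (Theorem~\ref{thm:viscosity}).} I will use Perron's method in the Harvey--Lawson framework. The comparison principle for viscosity sub/supersolutions on bounded domains holds for all phases, by Harvey--Lawson and Bhattacharya. On the exterior domain I will apply it on $B_R\setminus\overline\Omega$ and let $R\to\infty$, using that both candidate functions differ by $o(1)$ at infinity. Taking the supremum of subsolutions lying between $\underline u$ and $\overline u$ gives a viscosity solution trapped between the barriers. This yields the asymptotics \eqref{2-n} and the boundary values, and uniqueness follows from the same comparison.

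\emph{Smooth case (Theorem~\ref{thm:smooth}).} I will solve the Dirichlet problems on $B_R\setminus\overline\Omega$ with data $\underline u$. For $\Theta\ge (n-2)\pi/2$ the level sets are convex (Yuan), so the operator is concave on the relevant cone. The supercritical case follows Collins--Picard--Wu: $C^0$ bounds come from the barriers, and boundary $C^2$ estimates come from the subsolution $\underline u$. The critical case $\Theta=(n-2)\pi/2$ is the main obstacle, because there the phase cone degenerates and uniform ellipticity of the solution is not automatic. I would first handle it by approximation with $\Theta_\epsilon\downarrow\Theta$, keeping the subsolution strict. The estimates would be made uniform in $\epsilon$ via the Warren--Yuan/Wang--Yuan interior Hessian bounds, which are valid at the critical phase, together with boundary $C^2$ estimates that depend only on $\underline u$ and strict convexity of $\partial\Omega$. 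With Evans--Krylov and Schauder estimates this gives $C^\infty_{loc}$ bounds independent of $R$. Passing $R\to\infty$ via the barriers then gives the smooth solution with \eqref{2-n}, and uniqueness follows as before.
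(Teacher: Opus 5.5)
Your overall architecture matches the paper's. You perturb $Q_c$ by a multiple of the fundamental solution of the linearization at $A$, use $Q_c$ as supersolution, exhaust by bounded annuli, pass to the limit between the barriers, and get uniqueness by comparison. However, the smooth case has two genuine gaps. The first affects only the critical phase $\Theta=(n-2)\pi/2$; the second affects all phases.

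\textbf{Critical phase.} You approximate by $\Theta_\epsilon\downarrow\Theta$ and assert boundary $C^2$ estimates ``depending only on $\underline u$ and strict convexity of $\partial\Omega$'', uniform in $\epsilon$. That is exactly what has to be proved, and your plan gives no mechanism for it. The Collins--Picard--Wu estimates depend on the gap $\sigma_\epsilon=\Theta_\epsilon-(n-2)\pi/2$ and degenerate as $\sigma_\epsilon\to0$. For instance, the semiconvexity $\lambda_n>-\cot\sigma_\epsilon$ is lost, since at the critical phase $\lambda_n$ can tend to $-\infty$ together with $\lambda_{n-1}$. The Warren--Yuan/Wang--Yuan bounds cannot substitute near the boundary: they control $|D^2u(x)|$ through $\sup_{B_r(x)}|Du|/r$ with $r\le\operatorname{dist}(x,\partial\Omega)$, whereas the theorem requires $u$ smooth up to $\partial\Omega$. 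The paper (Section \ref{sectionbp}) supplies three missing ingredients:
\begin{enumerate}
\item[(a)] the shifted approximation $F(D^2u^t+tI)=(n-2)\pi/2+C_0t$, which keeps the original subsolution admissible with no strictness margin (Lemma \ref{lem:subsolution_preserved});
\item[(b)] the Fu--Yau--Zhang dichotomy (Lemmas \ref{lem:dichotomy} and \ref{lem:linearized_dichotomy}), driven by $\min_j\sum_{i\ne j}\arctan\underline\lambda_i>(n-3)\pi/2$, which yields a $t$-uniform boundary barrier (Lemma \ref{lem:boundary_barrier});
\item[(c)] a CNS-type double-normal bound obtained by minimizing the tangential phase on $\partial\Omega$ (Lemmas \ref{lem:unn_relation} and \ref{lem:unn_tau}).
\end{enumerate}
Note also that a far-field subsolution $Q_c-\psi$ with $D^2\psi=O(\rho^{-n})$ has $F(D^2\underline u)=\Theta+O(\rho^{-n})$. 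Its strictness margin on $B_R$ therefore tends to $0$, so in your scheme $\epsilon\to0$ must be taken on each fixed annulus before $R\to\infty$.

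\textbf{Regularity of the subsolution.} As written, your $\underline u$ is a supremum of quadratic supports glued to the asymptotic branch by a plain maximum. It is therefore only Lipschitz and semiconvex near $\partial\Omega$ and across the gluing region. That suffices for Perron's method but not for the smooth theory: solvability of each annular problem and the boundary $C^2$ estimates both need a $C^2$ subsolution equal to $\varphi$ on $\partial\Omega$. The paper instead uses $v_0=\Phi+(d+1)^N-1$ near $\partial\Omega$ (Lemma \ref{lem:local-subsolution}), where strict convexity of $\partial\Omega$ sends every eigenvalue to $+\infty$. It then inserts the bridge $Q_0+a\rho+C_b$ (Lemma \ref{brg}) and glues the pieces with the regularized maximum $M_\mu$. $M_\mu$ preserves subsolutions only because $\{F\ge\Theta\}$ is convex (Lemma \ref{max-sub}), which is where $\Theta\ge(n-2)\pi/2$ enters the construction itself.

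A minor slip: the $L_A$-harmonic function is $\rho^{2-n}$ with $\rho=|(I+A^2)^{1/2}x|$, via the substitution $x=(I+A^2)^{-1/2}y$. Your $|(I+A^2)^{-1/2}x|^{2-n}$ is harmonic for $\operatorname{tr}((I+A^2)D^2)$ instead.
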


\section{Preliminaries}
\subsection{Notation and definitions}
The following notation and definitions will be used throughout this paper. Since the special Lagrangian equation depends only on the Hessian of $u$, the linear term $b \cdot x$ in the quadratic asymptotics can be absorbed into the boundary data by replacing $\varphi$ with $\varphi - b \cdot x$. Without loss of generality,  we assume $b = 0$. 
	\begin{equation*}
	Q_c(x) := \frac{1}{2}x^T A x + c.
	\label{def:Qc}
\end{equation*}
We denote by $\mathrm{Sym}(n)$ the space of $n \times n$ real symmetric matrices. For $M \in \mathrm{Sym}(n)$, we write $\lambda(M) = (\lambda_1(M), \ldots, \lambda_n(M))$ for its eigenvalues.

Define the Lagrangian phase function
\begin{equation*}
	F(M) := \sum_{i=1}^n \arctan \lambda_i(M), \qquad   F(A)=\Theta.
	\label{def:H}
\end{equation*}
For $M = (M_{ij}) \in \mathrm{Sym}(n)$, denote the matrix $G$ by
\begin{equation}
G_{ij} := \frac{\partial F}{\partial M_{ij}}(A), \qquad	G= (G_{ij})_{i,j=1}^n      .
	\label{def:G}
\end{equation}
$G$ is a positive definite matrix for every $A \in \mathcal{A}_\Theta$ since 
$$
G \sim \mathrm{diag}\left( \frac{1}{1+\lambda_1(A)^2}, \dots, \frac{1}{1+\lambda_n(A)^2} \right).
$$
We define the ellipsoid norm in terms of $G$ by
\begin{equation}
	\rho(x) := (x^T G^{-1} x)^{1/2}.
	\label{def:rho}
\end{equation}
In this setting, under the linear change of variables $x = G^{1/2}y$, set $w(y) = v(G^{1/2}y)$. Then the linearized operator
\begin{equation}
	L_A v := \sum_{i,j=1}^n G_{ij} v_{ij},
	\label{def:LA}
\end{equation}
satisfies $L_A v(x) = \Delta_y w(y)$, and the norm $\rho(x)$ becomes $|y|$. 
For $R > 0$, set
\begin{equation}
	E_R := \{ x \in \mathbb{R}^n : \rho(x) < R \}.
\end{equation}
Since $G > 0$, each $E_R$ is a smooth strictly convex ellipsoid.

\subsection{Some preliminary lemmas}
In this subsection, we collect some well-known preliminary lemmas which will be used in the proof of the main theorems.

\begin{lemma}[\cite{Li19}, Lemma 2.4]
	\label{lem:comparison}
	Let $\Omega \subset \mathbb{R}^n$ be a domain, and let $\underline{u}, \overline{u} \in C^0(\overline{\Omega})$ be a viscosity subsolution and a  viscosity supersolution, respectively, of the special Lagrangian equation
	$$
	\sum_{i=1}^n \arctan \lambda_i(D^2u) = \Theta,
	$$
	with $\Theta \in (-n\pi/2, n\pi/2)$. Suppose $\underline{u} \le \overline{u}$ on $\partial \Omega$, and additionally,
	$$
	\lim_{|x| \to +\infty} (\underline{u} - \overline{u})(x) = 0,
	$$
	provided $\Omega$ is unbounded. Then $\underline{u} \le \overline{u}$ in $\Omega$.
\end{lemma}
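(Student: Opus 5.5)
The plan is to reduce the unbounded case to the bounded comparison principle for the special Lagrangian operator, using a small constant shift to absorb the behaviour at infinity. The bounded case itself comes from Harvey--Lawson \cite{HL09}, sharpened for all phases by Bhattacharya \cite{B24}. Write $F(M)=\sum\arctan\lambda_i(M)$. This operator is uniformly degenerate elliptic, since $F(M+P)\ge F(M)$ for $P\ge0$. It depends only on $D^2u$, so adding constants preserves the sub- and supersolution properties.

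\emph{Bounded case.} For $\Omega$ bounded, I would invoke the comparison principle for continuous viscosity sub- and supersolutions of $F(D^2u)=\Theta$. The standard route goes through Jensen's approximation: take sup-/inf-convolutions $\underline u^\varepsilon$ and $\overline u_\varepsilon$, which are semiconvex/semiconcave and remain sub/supersolutions on slightly smaller domains. Suppose the conclusion fails, i.e. $\max(\underline u-\overline u)>0$ is attained at an interior point. Perturb by $\delta|x|^2$ and use $F(M+2\delta I)>F(M)$ strictly; here strict monotonicity in the $I$-direction holds because $\partial F/\partial M=\sum (1+\lambda_i^2)^{-1}>0$ along $I$. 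Then Jensen's lemma and Aleksandrov twice-differentiability give a point where both functions are punctually twice differentiable with $D^2\underline u^\varepsilon\le D^2\overline u_\varepsilon$. This yields the contradiction $\Theta<F(D^2\underline u^\varepsilon+2\delta I)\le F(D^2\overline u_\varepsilon)\le\Theta$. Alternatively, one simply cites \cite{HL09}, since the phase set is a Dirichlet set, i.e. a closed set in $\mathrm{Sym}(n)$ stable under adding positive semidefinite matrices.

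\emph{Unbounded case.} Fix $\varepsilon>0$. By the limit assumption there is $R$ with $\underline u-\overline u<\varepsilon$ on $\Omega\setminus B_R$. Set $\Omega_R=\Omega\cap B_R$, which is bounded. On $\partial\Omega_R\subset\partial\Omega\cup(\overline\Omega\cap\partial B_R)$ we have $\underline u\le\overline u+\varepsilon$. Since $\overline u+\varepsilon$ is still a viscosity supersolution, the bounded case gives $\underline u\le\overline u+\varepsilon$ in $\Omega_R$, and this also holds trivially outside $B_R$. Letting $\varepsilon\to0$ yields $\underline u\le\overline u$ in $\Omega$.

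\emph{Main obstacle.} The real work is the bounded comparison for non-concave phases, $\Theta<(n-2)\pi/2$, where $F$ is neither concave nor convex. The strict-perturbation argument above still works because it uses only monotonicity and strict monotonicity along $I$, with no concavity. The remaining technical check is that the sup-convolution preserves the subsolution property on the shrunken domain $\{x:\operatorname{dist}(x,\partial\Omega_R)>C\sqrt\varepsilon\}$. Continuity up to the boundary then lets us pass to the limit.
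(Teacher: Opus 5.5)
Your proof is correct. The paper gives no argument for this lemma and only imports it from \cite{Li19}, so your proposal supplies what the paper omits.

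The exhaustion step is the standard reduction of the unbounded case to the bounded one: compare $\underline u$ with the supersolution $\overline u+\varepsilon$ on the bounded set $\Omega\cap B_R$, whose boundary lies in $\partial\Omega\cup(\overline\Omega\cap\partial B_R)$, then let $\varepsilon\to0$.

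Your bounded-domain argument is valid for every phase. It uses only the monotonicity of $F$ and the pointwise strict increase $\frac{d}{ds}F(M+sI)=\sum_i\bigl(1+(\lambda_i(M)+s)^2\bigr)^{-1}>0$. No concavity is needed. No uniform strictness margin is needed either, because Jensen's lemma gives you an actual point where both convolutions are twice differentiable. Alternatively, as you note, you can cite the Harvey--Lawson comparison theorem for the Dirichlet set $\{F\ge\Theta\}$.

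A few small repairs are needed:
\begin{itemize}
\item The maximum you extract is of $\underline u^\varepsilon+\delta|x|^2-\overline u_\varepsilon$. So at that point you get $D^2\underline u^\varepsilon+2\delta I\le D^2\overline u_\varepsilon$, not merely $D^2\underline u^\varepsilon\le D^2\overline u_\varepsilon$. The weaker inequality would not justify your middle step.
\item The contradiction should therefore read $\Theta\le F(D^2\underline u^\varepsilon)<F(D^2\underline u^\varepsilon+2\delta I)\le F(D^2\overline u_\varepsilon)\le\Theta$.
\item Choose $\delta$ small relative to $\max(\underline u-\overline u)$ and $\sup|x|^2$ over the bounded set. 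This ensures the perturbed difference still has a positive interior maximum exceeding its boundary values.
\item Use different letters for the convolution parameter and the shift $\varepsilon$ in the exhaustion step.
\end{itemize}
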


\begin{lemma}[\cite{Li19}, Lemma 2.5]
	\label{lem:perron}
	Let $\Omega \subset \mathbb{R}^n$ be a domain, and let $\underline{u}, \overline{u} \in C^0(\overline{\Omega})$ be  viscosity subsolution and a  viscosity supersolution, respectively, of the special Lagrangian equation
	$$
	\sum_{i=1}^n \arctan \lambda_i(D^2u) = \Theta,
	$$
	with $\Theta \in (-n\pi/2, n\pi/2)$. Suppose $\underline{u} \le \overline{u}$ in $\Omega$, $\underline{u} =\varphi$ on $\partial \Omega$, and additionally,
	$$
	\lim_{|x| \to +\infty} (\overline{u} - \underline{u})(x) = 0,
	$$
	provided $\Omega$ is unbounded. Then there exists a viscosity solution $u \in C^0(\overline{\Omega})$ of
	$$
	\sum_{i=1}^n \arctan \lambda_i(D^2u) = \Theta \quad \text{in } \Omega,
	$$
	such that
	$$
	\underline{u} \le u \le \overline{u} \quad \text{in } \Omega,
	$$
	and $u = \underline{u} = \varphi$ on $\partial \Omega$.
\end{lemma}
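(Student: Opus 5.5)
The plan is to run Perron's method in the form of Ishii, using Lemma \ref{lem:comparison} to get continuity. Define the admissible class
$$
\mathcal{S} := \{ v \in \mathrm{USC}(\overline{\Omega}) : v \text{ is a viscosity subsolution in } \Omega,\ \underline{u} \le v \le \overline{u} \text{ in } \overline{\Omega} \},
$$
which is nonempty since $\underline{u} \in \mathcal{S}$. Set $u(x) := \sup_{v \in \mathcal{S}} v(x)$, and let $u^*$ and $u_*$ be its upper and lower semicontinuous envelopes. Then
$$
\underline{u} \le u_* \le u \le u^* \le \overline{u}.
$$
The last inequality uses that $\overline{u}$ is continuous.

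\textbf{Step 1: $u^*$ is a subsolution.} Since $F(M)=\sum \arctan\lambda_i(M)$ is continuous and degenerate elliptic, the upper envelope of a family of subsolutions is again a subsolution. Take a $C^2$ test function $\psi$ touching $u^*$ strictly from above at $x_0$. Choose $v_k \in \mathcal{S}$ and points $x_k \to x_0$ with $v_k(x_k) \to u^*(x_0)$. Then $v_k - \psi$ attains local maxima at points $y_k \to x_0$, which gives $F(D^2\psi(y_k)) \ge \Theta$, and letting $k\to\infty$ yields $F(D^2\psi(x_0))\ge\Theta$. Since also $u^* \le \overline{u}$, we get $u^* \in \mathcal{S}$, and therefore $u^* = u$.

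\textbf{Step 2: $u_*$ is a supersolution (bump construction).} Suppose instead that some $C^2$ function $\psi$ touches $u_*$ from below at $x_0 \in \Omega$ with $F(D^2\psi(x_0)) > \Theta$. First, $u_*(x_0) < \overline{u}(x_0)$. Otherwise $\psi$ would touch the supersolution $\overline{u}$ from below at $x_0$, forcing $F(D^2\psi(x_0)) \le \Theta$, a contradiction. Next, by continuity of $F$, the function $\psi_\delta := \psi + \delta - \varepsilon|x-x_0|^2$ is a strict classical subsolution in a small ball $B_r(x_0)$, for small $\varepsilon, \delta$. On $\partial B_r(x_0)$ it lies below $u$, and it is at most $\overline{u}$. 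Then $\max(u,\psi_\delta)$ in $B_r(x_0)$, extended by $u$ outside, belongs to $\mathcal{S}$. It exceeds $u$ at points near $x_0$, which contradicts the maximality of $u$.

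\textbf{Step 3: boundary values, behaviour at infinity, and continuity.} This is the delicate step. For $u$ to attain $\varphi$ on $\partial\Omega$, I will also use $\overline{u} = \varphi$ on $\partial\Omega$. This holds for the barriers the paper constructs, and I would state it explicitly. Granting it, the squeeze $\underline{u} \le u_* \le u^* \le \overline{u}$ with $\underline{u} = \overline{u} = \varphi$ on $\partial\Omega$ gives $u_* = u^* = \varphi$ there, and both functions are continuous up to $\partial\Omega$ from the continuity of the barriers. The assumption $\overline{u}-\underline{u}\to 0$ as $|x|\to\infty$ gives $u^*-u_*\to 0$ at infinity. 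Now apply the comparison principle, Lemma \ref{lem:comparison}, to the subsolution $u^*$ and the supersolution $u_*$; this gives $u^* \le u_*$, so $u = u^* = u_*$ is continuous and is a viscosity solution.

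\textbf{Main obstacle.} Lemma \ref{lem:comparison} is stated for continuous sub- and supersolutions, whereas here it must be applied to semicontinuous ones. I expect this to be the main difficulty. I would handle it by observing that the comparison proof for this equation carries over verbatim to the semicontinuous case: it goes through Jensen--Ishii sup-convolution doubling, or through Harvey--Lawson's subequation framework, which is naturally posed for upper semicontinuous functions. On an unbounded $\Omega$, one runs that proof on $\Omega\cap B_R$ with the error $\sup_{\partial B_R}(u^*-u_*)\to 0$.
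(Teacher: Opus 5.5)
Your Steps 1 and 2 are the standard Ishii argument and are fine. If $\overline u=\varphi$ on $\partial\Omega$, Step 3 (the squeeze, plus comparison for semicontinuous sub/supersolutions run on $\Omega\cap B_R$) finishes the proof, and the semicontinuous comparison you flag is routine.

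\textbf{The gap is the added hypothesis $\overline u=\varphi$ on $\partial\Omega$.} It is not in the statement. Your claim that it holds for the paper's barriers is false: the lemma is applied only in the proof of Theorem \ref{thm:viscosity}, with $\overline u=Q_c$, and there $Q_c-\varphi\ge c-C>0$ on $\partial\Omega$ for all large $c$.

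Without the hypothesis, your construction gives the wrong boundary values. Your class $\mathcal S$ imposes nothing on $\partial\Omega$, so the function equal to $\underline u$ in $\Omega$ and to $\overline u$ on $\partial\Omega$ is upper semicontinuous and lies in $\mathcal S$. This forces $u=\overline u$ on $\partial\Omega$.

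The missing idea is as follows. Put $v\le\varphi$ on $\partial\Omega$ into the class. Then prove $\limsup_{x\to\xi}u(x)\le\varphi(\xi)$ at every $\xi\in\partial\Omega$ using local upper barriers. These are supersolutions that are $\ge\varphi$ on $\partial\Omega$, within $\epsilon$ of $\varphi(\xi)$ at $\xi$, and $\ge\overline u$ on the rest of the boundary of a neighbourhood of $\xi$. This, not semicontinuous comparison, is the real obstacle.

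Whether such barriers exist depends on the phase, on $\partial\Omega$ and on $\varphi$. For $\Theta\ge(n-2)\pi/2$, every subsolution is subharmonic by Lemma \ref{lem:yuan}(iv), so harmonic functions serve as barriers whenever $\partial\Omega$ is regular for the Laplacian. Nothing of the sort is automatic in general.

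This step cannot be bypassed, because the lemma as stated is false for general domains and phases. Take:
\begin{itemize}
\item $\Omega=B_2\setminus\overline{B_1}$ and $\varphi=0$;
\item $\Theta=-\Theta'$ with $(n-1)\pi/2<\Theta'<n\pi/2$;
\item $\underline u\equiv0$;
\item $\overline u=2K-\frac{K}{2}|x|^2$ with $n\arctan K\ge\Theta'$.
\end{itemize}
Every hypothesis holds.

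Suppose $u$ were as in the conclusion. Then $U=-u$ satisfies $F(D^2U)\ge\Theta'$ in the viscosity sense. Hence $\lambda_{\min}(D^2U)\ge c_0:=\tan(\Theta'-(n-1)\pi/2)>0$, so $U-\frac{c_0}{2}|x|^2$ is convex on every segment in $\Omega$.

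Consider the segment $\{(1+s)e_1+te_2 : |t|\le\sqrt{4-(1+s)^2}\}$. Its endpoints lie on $\partial B_2$, where $U=0$. Convexity gives $U((1+s)e_1)\le\frac{c_0}{2}(1+s)^2-2c_0$. Letting $s\to0^+$ gives $U(e_1)\le-\frac{3}{2}c_0<0$, contradicting $U(e_1)=0$.

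So you have two options. You can keep $\overline u=\varphi$ on $\partial\Omega$ (or an explicit boundary-barrier assumption) as a hypothesis; your proof is then correct, but it no longer covers the paper's use with $\overline u=Q_c$. Alternatively, you must construct the boundary barriers in the specific exterior setting where the lemma is applied.
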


\begin{lemma}[\cite{Y06}, Lemma 2.1]
	\label{lem:convex}
	For $\Theta \in [(n-2)\pi/2, n\pi/2)$, the level set
	\begin{equation*}
		L_\Theta := \left\{ \lambda \in \mathbb{R}^n : \sum_{i=1}^n \arctan \lambda_i \geq \Theta \right\}
		\label{def:LTheta}
	\end{equation*}
	is convex.
\end{lemma}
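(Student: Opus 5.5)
The plan is to prove Lemma~\ref{lem:convex} in three steps: a structural fact about points of $L_\Theta$, nonnegative curvature of the level hypersurfaces of $f(\lambda)=\sum_i\arctan\lambda_i$ (computed only at points where $f$ is strictly supercritical), and a local-to-global passage, handled by approximation from strictly supercritical levels.

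\textbf{Step 1: at most one negative eigenvalue.} Fix $\sigma>(n-2)\pi/2$ and a point $\lambda$ with $f(\lambda)\ge\sigma$. For $i\neq j$, the other $n-2$ terms contribute less than $(n-2)\pi/2$. Hence $\arctan\lambda_i+\arctan\lambda_j>\sigma-(n-2)\pi/2>0$, so $\lambda_i+\lambda_j>0$. Consequently at most one eigenvalue, say $\lambda_n$, is nonpositive. If $\lambda_n\le0$, then $\lambda_i>|\lambda_n|\ge0$ for all $i<n$.

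\textbf{Step 2: curvature of the level sets.} Write $f_i=1/(1+\lambda_i^2)>0$ and $f_{ii}=-2\lambda_i f_i^2$. Convexity of $\{f\ge\sigma\}$ near its boundary amounts to
\[
\sum_i f_{ii}\xi_i^2\le 0 \qquad\text{whenever}\qquad \sum_i f_i\xi_i=0,
\]
that is, $\sum_i\lambda_i f_i^2\xi_i^2\ge0$ on the tangent space. This is trivial if all $\lambda_i\ge0$. Otherwise $\lambda_n<0$, and substituting $f_n\xi_n=-\sum_{i<n}f_i\xi_i$ reduces the claim to
\[
|\lambda_n|\Bigl(\sum_{i<n}f_i\xi_i\Bigr)^2\le\sum_{i<n}\lambda_i f_i^2\xi_i^2 .
\]
By Cauchy--Schwarz the left side is at most $|\lambda_n|\bigl(\sum_{i<n}\lambda_if_i^2\xi_i^2\bigr)\bigl(\sum_{i<n}1/\lambda_i\bigr)$. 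It therefore suffices to show $\sum_{i<n}1/\lambda_i\le 1/|\lambda_n|$. Using $\pi/2-\arctan\lambda=\arctan(1/\lambda)$ for $\lambda>0$, the phase condition $f\ge\sigma>(n-2)\pi/2$ gives
\[
\sum_{i<n}\arctan(1/\lambda_i)<\pi/2+\arctan\lambda_n=\arctan(1/|\lambda_n|).
\]
Since $\arctan$ is concave with $\arctan0=0$, it is subadditive on $[0,\infty)$. So $\arctan\bigl(\sum_{i<n}1/\lambda_i\bigr)$ is bounded by the left side, and monotonicity gives the desired inequality. Thus every level hypersurface $\{f=\sigma\}$ with $\sigma>(n-2)\pi/2$ has nonnegative second fundamental form with respect to the normal $\nabla f$, which never vanishes.

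\textbf{Step 3: globalization, the main obstacle.} A nonnegatively curved boundary only gives local convexity, so I would use the Tietze--Nakajima theorem: a closed, connected, locally convex subset of $\mathbb{R}^n$ is convex. Connectedness of $L_\sigma=\{f\ge\sigma\}$ is easy, because $f$ is increasing in each coordinate: any $\lambda\in L_\sigma$ can be joined inside $L_\sigma$ to $(t,\dots,t)$ for large $t$ by raising coordinates one at a time. Local convexity at a boundary point follows from Step 2. Since $\nabla f\neq0$, the implicit function theorem writes $\partial L_\sigma$ locally as a graph over the tangent plane, and by Step 2 this graph is a convex function with $L_\sigma$ its supergraph. (The Hessian form is only semidefinite, but a $C^2$ function with nonnegative Hessian is convex, so no strictness is needed.) This proves convexity of $L_\sigma$ for every $\sigma>(n-2)\pi/2$.

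\textbf{Step 4: the endpoint $\Theta=(n-2)\pi/2$.} Since $f$ is strictly increasing along $(1,\dots,1)$, every point of $\{f\ge\Theta\}$ is a limit of points in $\bigcup_{\sigma>\Theta}L_\sigma$. Hence $L_\Theta$ is the closure of an increasing union of convex sets, and is therefore convex. This also covers the case $\Theta>(n-2)\pi/2$ directly, completing the proof.
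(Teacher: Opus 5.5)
Your proof is correct. The paper gives no proof of its own; it simply quotes Yuan's Lemma 2.1. Your Steps 1--2 are essentially Yuan's argument: the phase condition forces at most one negative eigenvalue and gives $\sum_{i<n}1/\lambda_i\le 1/|\lambda_n|$ (equivalently $\sigma_{n-1}(\lambda)\ge 0$ when $\lambda_n<0$), and Cauchy--Schwarz then shows that the level hypersurface has nonnegative second fundamental form.

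Your Steps 3--4, the Tietze--Nakajima globalization and the approximation from supercritical levels, make explicit the local-to-global passage that the citation leaves implicit. For $n\ge 3$, Step 4 could be dropped. Steps 1--2 remain valid at $\sigma=(n-2)\pi/2$: Step 1 still gives $\lambda_i+\lambda_j>0$, because each of the remaining $n-2\ge 1$ arctangents is strictly below $\pi/2$, and Step 2 only needs the non-strict bound.
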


\begin{lemma}[Far-field barriers]
	\label{lem:farfield}
	Assume $A \in \mathcal{A}_\Theta$ with $-n\pi/2< \Theta < n\pi/2$. For
	$B>0$ and $\delta>0$, there exists a structural constant $R_* > 0$, depending only on $n$, $\Theta$, $A$, $B$, and $\delta$, such that for every $R > R_*$, the function
	\begin{equation}
		u_\infty^-(x) := Q_c(x) - B \rho(x)^{2-n} + \delta \rho(x)^{-n}, \qquad \rho(x) \ge R,
		\label{farfield-sub}
	\end{equation}
	is a smooth subsolution of the special Lagrangian equation
	\begin{equation}
		\sum_{i=1}^n \arctan \lambda_i(D^2u) = \Theta
	\end{equation}
	in $\mathbb{R}^n \setminus E_R$, and satisfies
	\begin{equation}
		Q_c(x) - C|x|^{2-n} < u_\infty^-(x) < Q_c(x), \qquad \rho(x) > R,
		\label{farfield-estimate}
	\end{equation}
	for some constant $C = C(n,\Theta,A,B) > 0$.
	
	Moreover, the function
	\begin{equation*}
		u_\infty^+(x) := Q_c(x) + B \rho(x)^{2-n} - \delta \rho(x)^{-n}, \qquad \rho(x) \ge R,
		\label{farfield-super}
	\end{equation*}
	is a smooth supersolution in $\mathbb{R}^n \setminus E_R$.
\end{lemma}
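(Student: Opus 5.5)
The plan is to expand $F(D^2u_\infty^-)$ to second order around $A$ and show that, for $\rho$ large, the first-order term $L_A$ applied to the perturbation dominates the quadratic remainder. Write $u_\infty^- = Q_c + w$ with $w := -B\rho^{2-n} + \delta\rho^{-n}$, so that $D^2u_\infty^- = A + D^2 w$. After the change of variables $x = G^{1/2}y$ we have $\rho(x)=|y|$ and $L_A v = \Delta_y(v\circ G^{1/2})$. Since $|y|^{2-n}$ is harmonic away from the origin,
\[
L_A\bigl(\rho^{2-n}\bigr)=0,\qquad L_A\bigl(\rho^{-n}\bigr)=\Delta_y |y|^{-n} = (-n)(-n+n-2+... )\,|y|^{-n-2}=2n\,\rho^{-n-2}.
\]
More precisely, $\Delta_y|y|^{p}=p(p+n-2)|y|^{p-2}$, and with $p=-n$ this gives $(-n)(-2)=2n>0$. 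Hence
\[
L_A w = 2n\delta\,\rho^{-n-2}>0 .
\]
This is the key positive term; it has order $\rho^{-n-2}$. Note that the $B$-term contributes nothing at first order, which is exactly why the $\delta\rho^{-n}$ correction is included.

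Next comes the Taylor expansion. $F$ is smooth (indeed analytic) on $\mathrm{Sym}(n)$ with bounded derivatives of all orders, since $\arctan$ has bounded derivatives and $F$ is a smooth spectral function. Therefore
\[
F(A+D^2w)\ \ge\ F(A)+L_A w - C_1|D^2w|^2,
\]
with $C_1$ depending on $n$ and a bound on $D^2F$ (one can even take it uniform). From the homogeneity of $\rho$ we get $|D^2\rho^{2-n}|\le C_2\rho^{-n}$ and $|D^2\rho^{-n}|\le C_2\rho^{-n-2}$, with $C_2=C_2(n,G)$. Thus $|D^2w|\le C_2(B+\delta)\rho^{-n}$ for $\rho\ge1$, and
\[
F(D^2u_\infty^-)-\Theta\ \ge\ 2n\delta\rho^{-n-2}-C_1C_2^2(B+\delta)^2\rho^{-2n}.
\]
Since $n\ge3$, we have $2n>n+2$, so the error decays strictly faster than the main term. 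Choosing $R_*$ with $R_*^{\,n-2}> C_1C_2^2(B+\delta)^2/(2n\delta)$ (and $R_*\ge1$) makes the right side positive for $\rho>R_*$, so $u_\infty^-$ is a classical, hence viscosity, subsolution on $\mathbb{R}^n\setminus E_R$. The supersolution statement follows by the symmetric argument with $w\mapsto -w$. There $L_A(-w)=-2n\delta\rho^{-n-2}$ and we use the opposite inequality $F(A-D^2w)\le\Theta-2n\delta\rho^{-n-2}+C_1|D^2w|^2$.

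For the estimate \eqref{farfield-estimate}, the upper bound $u_\infty^-<Q_c$ is equivalent to $B\rho^{2-n}>\delta\rho^{-n}$, i.e.\ $\rho^2>\delta/B$. We ensure this by also requiring $R_*^2>\delta/B$. For the lower bound, $u_\infty^- - Q_c\ge -B\rho^{2-n}$, and $\rho(x)\ge \lambda_{\min}(G^{-1})^{1/2}|x|$ gives $\rho^{2-n}\le C|x|^{2-n}$ with $C$ depending on $G$, hence on $A$. The strict inequality follows from adding the positive $\delta$-term.

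The main obstacle is only bookkeeping: the Taylor remainder of $F$ needs a uniform bound, which holds because $D^2F$ is globally bounded on $\mathrm{Sym}(n)$. This can be seen, for instance, by writing $F(M)=\operatorname{Im}\log\det(I+iM)$, whose derivatives involve $(I+iM)^{-1}$, and $\|(I+iM)^{-1}\|\le1$. The dependence of $R_*$ on $(n,\Theta,A,B,\delta)$ then comes only through $G$ and these constants.
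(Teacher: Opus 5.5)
Your proposal is correct and follows essentially the same argument as the paper: expand $F$ to second order at $A$, use $L_A(\rho^{2-n})=0$ and $L_A(\rho^{-n})=2n\rho^{-n-2}$ in the coordinates $y=G^{-1/2}x$, and let the positive term $2n\delta\rho^{-n-2}$ dominate the $O(\rho^{-2n})$ remainder, which works because $n\ge 3$. The only difference is that you control the Taylor remainder with a global bound on $D^2F$ (from $\|(I+iM)^{-1}\|\le 1$), whereas the paper uses a local expansion near $A$ after first taking $R$ large enough that $|D^2\psi|\le\eta$; you also state explicitly the extra condition $R_*^2>\delta/B$ that gives $u_\infty^-<Q_c$, which the paper covers by ``$R$ sufficiently large''.
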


\begin{proof}
	For every real $\alpha$, differentiating $\rho^\alpha = (x^T G^{-1}x)^{\alpha/2}$ gives
	\begin{equation*}
		\partial_{ij}\rho^\alpha = \alpha \rho^{\alpha-2}(G^{-1})_{ij} + \alpha(\alpha-2)\rho^{\alpha-4}(G^{-1}x)_i(G^{-1}x)_j.
		\label{eq:rho-derivative}
	\end{equation*}
	Since $|G^{-1}x| \le C\rho$, it follows that
	\begin{equation*}
		|\partial_{ij}\rho^\alpha| \le C\rho^{\alpha-2},
	\end{equation*}
	where $C$ depends only on $n$, $\alpha$, and the eigenvalue bounds of $A$.
	
	With the change of variables $y = G^{-1/2}x$, one has $\rho(x) = |y|$ and $L_A = \Delta_y$. Hence
	\begin{equation*}
		L_A(\rho^{2-n}) = 0, \qquad L_A(\rho^{-n}) = 2n\rho^{-n-2}.
		\label{eq:LA-rho}
	\end{equation*}
	Set
	\begin{equation*}
		\psi(x) := -B \rho(x)^{2-n} + \delta \rho(x)^{-n}.
	\end{equation*}
	Then $D^2u_\infty^- = A + D^2\psi$ and
	\begin{equation}
		L_A\psi = 2n\delta \rho^{-n-2}.
		\label{eq:LA-psi}
	\end{equation}
	On the other hand,
	\begin{equation}
		|D^2\psi| \le C_1 B \rho^{-n} + C_2\delta \rho^{-n-2}.
		\label{eq:D2psi-estimate}
	\end{equation}
	
	Since $F(M) = \sum_{i=1}^n \arctan \lambda_i(M)$ is smooth near $A$, there exists $\eta > 0$ such that whenever $H \in \mathrm{Sym}(n)$ and $|H| \le \eta$,
	\begin{equation}
		F(A + H) = F(A) + L_A H + O(|H|^2).
		\label{taylor}
	\end{equation}
 Choosing $R \ge R_*$ large enough and using \eqref{eq:D2psi-estimate}, we obtain $|D^2\psi|\leq \eta$. 
	
	Taking $H = D^2\psi$, for large $R_*$ so that $|D^2\psi| \le \eta$ for $\rho \ge R$, we obtain from \eqref{eq:LA-psi} and \eqref{eq:D2psi-estimate},
	\begin{equation}
		F(A + D^2\psi) = F(A) + L_A(D^2\psi) + O(|D^2\psi|^2)= \Theta + 2n\delta \rho^{-n-2} + O(\rho^{-2n}).
	\end{equation}
	Indeed, there exists a constant $C > 0$ such that
	$$
	|O(\rho^{-2n})| \le C\rho^{-2n}.
	$$
	Thus
	$$
	F(A + D^2\psi) \ge \Theta + 2n\delta \rho^{-n-2} - C\rho^{-2n}.
	$$
	To ensure the right-hand side is at least $\Theta$, it suffices to have
	$$
	2n\delta \rho^{-n-2} - C\rho^{-2n} \ge 0,
	$$
	which is equivalent to
	$$
	2n\delta \rho^{n-2} \ge C.
	$$
	Since $n \ge 3$, we may choose $R$ sufficiently large such that
	$$
	R^{n-2} \ge \frac{C}{2n\delta}.
	$$
	Then for all $\rho \ge R$, we have $2n\delta \rho^{n-2} \ge C$, and consequently
	$$
	2n\delta \rho^{-n-2} - C\rho^{-2n} \ge 0.
	$$
	Therefore,
	$$
	F(A + D^2\psi) \ge \Theta \quad \text{for } \rho \ge R.
	$$
	Finally, since $\rho$ and $|x|$ are comparable, for $R$ sufficiently large,
	\begin{equation}
		-C|x|^{2-n} < -B \rho^{2-n} + \delta \rho^{-n} < 0.
	\end{equation}
	This gives the two-sided estimate
	\begin{equation}
		Q_c(x) - C|x|^{2-n} < u_\infty^-(x) < Q_c(x), \qquad \rho(x) \ge R.
	\end{equation}
	The supersolution is proved in the same way, with
	\begin{equation}
		\psi_+(x) := B \rho^{2-n} - \delta \rho^{-n}.
	\end{equation}
	The linear term changes sign, and the same Taylor expansion shows that, for $\rho$ sufficiently large, the negative linear contribution dominates the quadratic remainder; hence $F(A + D^2\psi_+) \le \Theta$, with increasing $R$ if necessary.
\end{proof}

\begin{lemma}[\cite{BJ26}, Lemma 2.4]
	\label{lem:regmax}
	Let $\mu > 0$. There exists a function  $M_\mu \in C^\infty(\mathbb{R}^2)$ with the following properties:
	\begin{enumerate}
		\item[(i)] $M_\mu$ is convex and nondecreasing in each variable;
		\item[(ii)] $M_\mu(s,t) = \max\{s,t\}$ whenever $|s-t| \ge \mu$;
		\item[(iii)] $\max\{s,t\} \le M_\mu(s,t) \le \max\{s,t\} + \mu$.
	\end{enumerate}
\end{lemma}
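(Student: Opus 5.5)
We construct $M_\mu$ explicitly by smoothing the maximum with a mollifier in the single variable $s-t$. We use the identity $\max\{s,t\} = t + (s-t)^+$, so it suffices to build a smooth convex function $h_\mu:\mathbb{R}\to\mathbb{R}$ satisfying three properties: $h_\mu(r)=r^+$ for $|r|\ge\mu$, $r^+\le h_\mu(r)\le r^+ + \mu$, and $0\le h_\mu'\le 1$. We then set $M_\mu(s,t) := t + h_\mu(s-t)$.

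To build $h_\mu$, take a smooth even nonnegative mollifier $\eta$ supported in $[-1,1]$ with $\int\eta=1$, and put $\eta_\mu(r)=\mu^{-1}\eta(r/\mu)$. Define
\[
h_\mu(r) := \int_{\mathbb{R}} (r-\tau)^+\,\eta_\mu(\tau)\,d\tau .
\]
Smoothness and convexity follow because $h_\mu$ is a convolution of a convex function with a nonnegative smooth kernel. The derivative is $h_\mu'(r)=\int \mathbf{1}_{\{r>\tau\}}\eta_\mu(\tau)\,d\tau\in[0,1]$.

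We check the boundary behaviour of $h_\mu$ next.
\begin{itemize}
\item If $r\le -\mu$, then $r-\tau\le 0$ on the support of $\eta_\mu$, so $h_\mu(r)=0=r^+$.
\item If $r\ge\mu$, then $r-\tau\ge 0$ on the support, so $h_\mu(r)=r-\int\tau\eta_\mu=r$, since $\eta$ is even.
\end{itemize}
For the two-sided bound:
\begin{itemize}
\item Jensen's inequality applied to the convex function $(\cdot)^+$ gives $h_\mu(r)\ge (r-\int\tau\eta_\mu)^+=r^+$.
\item Since $(r-\tau)^+\le r^+ + |\tau|\le r^+ +\mu$ on the support, we get $h_\mu\le r^+ +\mu$.
\end{itemize}

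Now we transfer these properties to $M_\mu$. Property (iii) and property (ii) follow immediately from the corresponding properties of $h_\mu$. For (i), convexity holds because $(s,t)\mapsto h_\mu(s-t)$ is a convex function composed with a linear map, and adding the linear function $t$ preserves convexity. Monotonicity follows from the partial derivatives:
\[
\partial_s M_\mu = h_\mu'(s-t)\in[0,1], \qquad \partial_t M_\mu = 1-h_\mu'(s-t)\in[0,1].
\]
Both are nonnegative, so $M_\mu$ is nondecreasing in each variable.

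There is no real obstacle here. The only point needing care is choosing an even kernel, so that $h_\mu(r)=r$ exactly for $r\ge\mu$ and the lower bound $h_\mu\ge r^+$ holds via Jensen.
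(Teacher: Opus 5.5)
Your proposal is correct and is essentially the paper's construction. Since $\eta$ is even and $(r)^+=\frac{r+|r|}{2}$, your $h_\mu$ equals $\frac{r+\chi_\mu(r)}{2}$, so your $M_\mu(s,t)=t+h_\mu(s-t)$ is exactly the paper's $\frac{s+t+\chi_\mu(s-t)}{2}$. Your verification of (i)--(iii) also fills in the checks that the paper states without proof.
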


	The function $M_\mu$ is constructed as follows \cite{BJ26}. Let $\eta \in C_c^\infty((-1,1))$ be an even nonnegative function with $\int_{\mathbb{R}} \eta(r)\,dr = 1$, and define
	$$
	\eta_\mu(r) := \mu^{-1} \eta(r/\mu), \qquad 
	\chi_\mu(r) := \int_{\mathbb{R}} |r - s| \eta_\mu(s)\,ds.
	$$
	Then $\chi_\mu \in C^\infty(\mathbb{R})$ is even and convex, satisfies $\chi_\mu(r) = |r|$ for $|r| \ge \mu$, and $|r| \le \chi_\mu(r) \le |r| + \mu$. Finally, set
	$$
	M_\mu(s,t) := \frac{s + t + \chi_\mu(s - t)}{2}.
	$$

\begin{lemma}
	\label{max-sub}
	Let $M_\mu$ be as in Lemma \ref{lem:regmax} and $\Theta\geq (n-2)\pi/2$. Suppose $u_1, u_2 \in C^2(\mathbb{R}^n)$ satisfy 
	$$
	F(D^2u_1) \ge \Theta, \qquad F(D^2u_2) \ge \Theta \quad \text{in } \quad  \mathbb{R}^n.  
	$$
   Then $M_\mu(u_1, u_2) \in C^2(\mathbb{R}^n)$ and
	$$
	F(D^2 M_\mu(u_1, u_2)) \ge \Theta \quad \text{in } \quad \mathbb{R}^n.
	$$
	
\end{lemma}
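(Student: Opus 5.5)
Set $w := M_\mu(u_1,u_2)$ and $\chi=\chi_\mu$. The idea is that the Hessian of $w$ at each point is a convex combination of $D^2u_1$ and $D^2u_2$ plus a positive semidefinite term; then the convexity of the superlevel set (Lemma~\ref{lem:convex}) together with monotonicity of $F$ gives the conclusion.

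\emph{Regularity.} Since $\chi\in C^\infty$ and $u_1,u_2\in C^2$, we have $w=\tfrac12\bigl(u_1+u_2+\chi(u_1-u_2)\bigr)\in C^2(\mathbb{R}^n)$.

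\emph{Hessian computation.} Set $v=u_1-u_2$. Then
\begin{equation*}
D^2 w=\tfrac12\bigl(1+\chi'(v)\bigr)D^2u_1+\tfrac12\bigl(1-\chi'(v)\bigr)D^2u_2+\tfrac12\chi''(v)\,Dv\otimes Dv .
\end{equation*}
Because $\chi$ is convex with $|\chi'|\le1$ (it is a convolution of $|\cdot|$ with a probability density), we have $t:=\tfrac12(1+\chi'(v))\in[0,1]$ and $\chi''\ge0$. Hence
\[
D^2w=tD^2u_1+(1-t)D^2u_2+P, \qquad P\ge0 .
\]

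\emph{Convexity at the matrix level.} The set $\mathcal{S}_\Theta:=\{M\in\mathrm{Sym}(n):F(M)\ge\Theta\}$ is convex. To see this, recall that $F(M)$ is a symmetric function of the eigenvalues, and $L_\Theta$ is a convex, permutation-invariant subset of $\mathbb{R}^n$ by Lemma~\ref{lem:convex}. A standard result (via Schur--Horn/Ky Fan, or Davis's theorem on convex spectral sets) then shows that $\{M:\lambda(M)\in L_\Theta\}$ is convex.

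Concretely, for $M=tM_1+(1-t)M_2$, the vector $\lambda(M)$ lies in the convex hull of permutations of $t\lambda(M_1)+(1-t)\lambda(M_2)$, by the Lidskii/Ky Fan majorization. Since $L_\Theta$ is convex and symmetric, this gives $\lambda(M)\in L_\Theta$.

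This is the one nontrivial step. Lemma~\ref{lem:convex} is stated only for eigenvalue vectors, so it must be transferred to matrices. I would cite the convexity of spectral sets generated by symmetric convex sets rather than reprove it.

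\emph{Monotonicity and conclusion.} $F$ is nondecreasing in the Loewner order: if $P\ge0$ then $\lambda_i(M+P)\ge\lambda_i(M)$ by Weyl's inequality, and $\arctan$ is increasing. Therefore
\[
F(D^2w)\ge F\bigl(tD^2u_1+(1-t)D^2u_2\bigr)\ge\Theta .
\]
Here the last inequality uses that $D^2u_1,D^2u_2\in\mathcal{S}_\Theta$ and that $\mathcal{S}_\Theta$ is convex.

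The hypothesis $\Theta\ge(n-2)\pi/2$ is used only in the convexity step. The main obstacle is justifying that step cleanly: passing from convexity of $L_\Theta$ in eigenvalue space to convexity of the corresponding matrix set.
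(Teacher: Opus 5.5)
Your proposal is correct and follows the paper's proof essentially step for step: the same decomposition $D^2M_\mu(u_1,u_2)=\alpha D^2u_1+(1-\alpha)D^2u_2+\gamma\,D(u_1-u_2)\otimes D(u_1-u_2)$ with $\alpha\in[0,1]$ and $\gamma\ge0$, then convexity of $\{M:F(M)\ge\Theta\}$, then monotonicity of $F$. The only difference is that you justify the passage from convexity of $L_\Theta$ in eigenvalue space to convexity of the matrix superlevel set, via Ky Fan majorization and permutation invariance; the paper simply asserts this step.
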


\begin{proof}
	By Lemma \ref{lem:regmax}, it is clear that $M_\mu(u_1, u_2) \in C^2(\mathbb{R}^n)$. Let $r = u_1 - u_2$ and put
	$$
	\alpha := \frac{1 + \chi_\mu'(r)}{2}, \qquad \gamma := \frac{1}{2} \chi_\mu''(r) \ge 0.
	$$
	Then $0 \le \alpha \le 1$ and direct differentiation gives
	\begin{equation}
		D^2 M_\mu(u_1, u_2) = \alpha D^2 u_1 + (1 - \alpha) D^2 u_2 + \gamma D(u_1 - u_2) \otimes D(u_1 - u_2).
		\label{eq:max-hessian}
	\end{equation}
	Since 
	$\left\{ M \in \mathrm{Sym}(n) :F(M) \ge \Theta \right\}$
	  is convex, we have
	$$
	F(\alpha D^2u_1 + (1-\alpha)D^2u_2) \ge \Theta.
	$$
	Moreover, since $\gamma D(u_1 - u_2) \otimes D(u_1 - u_2) \ge 0$  and $F$ is monotone, we obtain
	$$
	F(D^2 M_\mu(u_1, u_2)) \ge F(\alpha D^2u_1 + (1-\alpha)D^2u_2) \ge \Theta.
	$$

\end{proof}
\section{Viscosity exterior solutions}
We define
$$
M_\varepsilon := A + \varepsilon G^{-1}.
$$
And from \eqref{taylor}, there exists $\varepsilon_0>0$ sufficiently small such that for every $0<\varepsilon<\varepsilon_0$,  $$
F(M_\varepsilon) = F(A) + \varepsilon L_A(G^{-1}) + O(\varepsilon^2)
= \Theta + n\varepsilon + O(\varepsilon^2)>\Theta.
$$

\begin{definition}[Uniform convexity]
	\label{def:convex}
	Let $\Omega \subset \mathbb{R}^n$ be a bounded domain. The domain $\Omega$ is said to be \emph{uniformly convex in the supporting-plane sense} if there exist constants $r_\Omega, c_\Omega, \kappa_\Omega, \xi_\Omega > 0$ such that, for every $\xi, x \in \partial \Omega$,
	\begin{equation}\label{unicon}
		-c_\Omega |x-\xi|^2 \le (x-\xi)\cdot \nu(\xi) \le -\kappa_\Omega |x-\xi|^2, \quad |x-\xi| < r_\Omega, 
	\end{equation}
	and
	\begin{equation}\label{unicon2}
		(x-\xi)\cdot \nu(\xi) \le -\xi_\Omega, \quad |x-\xi| \ge r_\Omega,
	\end{equation}
	where $\nu(\xi)$ denotes the exterior unit normal to $\partial \Omega$ at $\xi$.
\end{definition}

\begin{definition}[Uniform semiconvexity]
	\label{def:semiconvex}
	Let $\Omega \subset \mathbb{R}^n$ be a bounded $C^{1,1}$ domain. A function $\varphi \in C^0(\partial \Omega)$ is called \emph{uniformly semiconvex with respect to $\partial \Omega$} if there exist constants $r_\varphi, K_\varphi, P_\varphi > 0$ such that, for every $\xi \in \partial \Omega$, there exists a vector
	$$
	\tau_\xi \in T_\xi \partial \Omega, \qquad |\tau_\xi| \le P_\varphi,
	$$
	depending on $\varphi$, such that
	\begin{equation}\label{semi}
		\varphi(x) \ge \varphi(\xi) + \tau_\xi \cdot (x-\xi) - K_\varphi |x-\xi|^2 
	\end{equation}
	whenever $x \in \partial \Omega$ and $|x-\xi| < r_\varphi$.
\end{definition}

\begin{lemma}[Quadratic boundary supports]
	\label{lem:boundary-supports}
	Let \(\Omega\) and \(\varphi\) satisfy the hypotheses of Theorem \ref{thm:viscosity}. 
	For every \(0 < \varepsilon \le \varepsilon_0\) and every \(\xi \in \partial \Omega\), 
	there exists a uniformly bounded vector \(p_\xi^\varepsilon\), 
	depending only on \(n, \Theta, A, \Omega, K_\varphi, P_\varphi\), such that
	$$
	\omega_\xi^\varepsilon(x) := \varphi(\xi) + p_\xi^\varepsilon \cdot (x-\xi) 
	+ \frac{1}{2}(x-\xi)^T M_\varepsilon (x-\xi), \qquad x \in \mathbb{R}^n,
	$$
	satisfies
	$$
	\omega_\xi^\varepsilon(\xi) = \varphi(\xi), \qquad 
	\omega_\xi^\varepsilon < \varphi \quad \text{on } \partial \Omega \setminus \{\xi\}.
	$$
	Moreover, each \(\omega_\xi^\varepsilon\) is a strict smooth subsolution of the 
	special Lagrangian equation, since
	$$
	F(M_\varepsilon) = F(A + \varepsilon G^{-1}) 
	= \Theta + n\varepsilon + O(\varepsilon^2) > \Theta
	$$
	for all sufficiently small \(\varepsilon > 0\).
\end{lemma}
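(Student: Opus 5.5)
The plan is to take the tangential part of $p_\xi^\varepsilon$ equal to the semiconvexity vector $\tau_\xi$ from Definition \ref{def:semiconvex}, and the normal part to be a large multiple of the exterior normal:
$$
p_\xi^\varepsilon := \tau_\xi + \beta\,\nu(\xi),
$$
with $\beta>0$ a constant to be fixed, depending only on $n,\Theta,A,\Omega,K_\varphi,P_\varphi$ (and on $r_\varphi$). The strict subsolution property is immediate. Indeed, $D^2\omega_\xi^\varepsilon\equiv M_\varepsilon$, and $F(M_\varepsilon)>\Theta$ for $0<\varepsilon<\varepsilon_0$, as computed just before Definition \ref{def:convex}. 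Also $\omega_\xi^\varepsilon(\xi)=\varphi(\xi)$ trivially. So the whole argument reduces to the strict inequality $\omega_\xi^\varepsilon<\varphi$ on $\partial\Omega\setminus\{\xi\}$.

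For this, set $\Lambda:=\sup_{0<\varepsilon\le\varepsilon_0}|M_\varepsilon|\le |A|+\varepsilon_0|G^{-1}|$, which depends only on $n,\Theta,A$. Write $r:=\min\{r_\Omega,r_\varphi\}$ and split $x\in\partial\Omega$ into the near region $|x-\xi|<r$ and the far region $|x-\xi|\ge r$.

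\emph{Near region.} Combining \eqref{semi} with the upper bound in \eqref{unicon} gives
$$
\omega_\xi^\varepsilon(x)-\varphi(x)\le \beta (x-\xi)\cdot\nu(\xi)+\Bigl(K_\varphi+\tfrac{\Lambda}{2}\Bigr)|x-\xi|^2
\le \Bigl(K_\varphi+\tfrac{\Lambda}{2}-\beta\kappa_\Omega\Bigr)|x-\xi|^2 .
$$
This is strictly negative for $x\neq\xi$ once $\beta>(K_\varphi+\Lambda/2)/\kappa_\Omega$. It is exactly here that the uniform convexity of $\Omega$ is essential: the quadratic normal gap $\kappa_\Omega|x-\xi|^2$ must absorb both the semiconvexity defect $K_\varphi$ and the possibly indefinite Hessian $M_\varepsilon$. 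Note that $A$ need not be positive definite, so we cannot rely on $M_\varepsilon$ helping.

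\emph{Far region.} Let $d:=\operatorname{diam}\Omega$. Using $|\tau_\xi|\le P_\varphi$, the bound $\varphi(x)\ge\min_{\partial\Omega}\varphi$, and \eqref{unicon2}, we get
$$
\omega_\xi^\varepsilon(x)-\varphi(x)\le \operatorname{osc}_{\partial\Omega}\varphi + P_\varphi d+\tfrac{\Lambda}{2}d^2-\beta\,\xi_\Omega ,
$$
which is negative once $\beta$ is large. The main subtlety I expect is this far-region step. It requires control of $\operatorname{osc}\varphi$, which is finite since $\varphi\in C^0(\partial\Omega)$ on compact $\partial\Omega$, so $\beta$ (and hence $p_\xi^\varepsilon$) depends also on $\|\varphi\|_{C^0}$. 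I would record this as a harmless additional dependence, or absorb it by noting that semiconvexity with bounded $\tau_\xi$ on a compact connected boundary already bounds the oscillation in terms of $P_\varphi,K_\varphi,r_\varphi$ and $\Omega$. Choosing $\beta$ as the maximum of the two thresholds gives $|p_\xi^\varepsilon|\le P_\varphi+\beta$ uniformly in $\xi$ and $\varepsilon$, which completes the argument.
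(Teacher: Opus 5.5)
Your proposal is correct and follows essentially the same route as the paper: the paper also takes $p_\xi^\varepsilon=\tau_\xi+N\nu(\xi)$, absorbs $K_\varphi+\tfrac12\|M_\varepsilon\|$ into $N\kappa_\Omega$ on $|x-\xi|<\min\{r_\Omega,r_\varphi\}$, and uses the separation constant $\xi_\Omega$ against a compactness bound $C_\partial$ on the far part. Your remark that the constant also depends on $\operatorname{osc}_{\partial\Omega}\varphi$ is accurate and applies equally to the paper's $C_\partial$. Your alternative, controlling the oscillation by $P_\varphi,K_\varphi,r_\varphi$ and $\Omega$ via local two-sided Lipschitz bounds and chaining along the connected $\partial\Omega$, is also valid.
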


\begin{proof}
	The number $\varepsilon_0 > 0$ has been fixed so that $\|M_\varepsilon\| \le c(n,\Theta,A)$ for $0 < \varepsilon \le \varepsilon_0$.
	
	Choose
	$$
	p_\xi^\varepsilon := \tau_\xi + N\nu(\xi),
	$$
	where $N > 0$ will be chosen later.
	
	If $0 < |x-\xi| < r_0 := \min\{r_\Omega, r_\varphi\}$, then \eqref{unicon} and \eqref{semi} give
	$$
	\begin{aligned}
		\omega_\xi^\varepsilon(x) - \varphi(x)
		&\le N\nu(\xi)\cdot(x-\xi) + K_\varphi |x-\xi|^2 
		+ \frac{1}{2}\|M_\varepsilon\|\,|x-\xi|^2 \\
		&\le -\left(N\kappa_\Omega - K_\varphi - \frac{1}{2}\|M_\varepsilon\|\right)|x-\xi|^2.
	\end{aligned}
	$$
	Taking $N = N(n,\Theta,A,\Omega,K_\varphi)$ sufficiently large makes this negative 
	for every $0 < |x-\xi| < r_0$.
	
	It remains to consider the compact part $\{x\in \partial \Omega : |x-\xi| \ge r_0\}$. 
	Since $\varphi$ is continuous and $\partial \Omega$ is compact, there exists a constant 
	$C_\partial$, independent of $\xi$ and $\varepsilon$, such that
	$$
	\tau_\xi\cdot(x-\xi) + \frac{1}{2}(x-\xi)^T M_\varepsilon (x-\xi) 
	- \left(\varphi(x)-\varphi(\xi)\right) \le C_\partial
	$$
	on this set. The global separation in Definition \ref{def:convex} then yields
	\[
	\omega_\xi^\varepsilon(x) - \varphi(x) \le -N\xi_\Omega + C_\partial.
	\]
	Increasing $N$ once more, still only in terms of the fixed data, makes this strictly negative. 
	Hence $\omega_\xi^\varepsilon$ touches $\varphi$ from below at $\xi$ and is strictly below 
	$\varphi$ at every other boundary point.
	
	Finally, $D^2\omega_\xi^\varepsilon = M_\varepsilon$, and the strict smooth subsolution 
	property follows from
	$$
	F(M_\varepsilon) = F(A + \varepsilon G^{-1}) 
	= \Theta + n\varepsilon + O(\varepsilon^2) > \Theta
	$$
	for all sufficiently small $\varepsilon > 0$.
\end{proof}

Define
$$
w_\varepsilon(x) := \sup_{\xi \in \partial \Omega} \omega_\xi^\varepsilon(x), \qquad x \in \mathbb{R}^n.
$$
Since the family $\{\omega_\xi^\varepsilon\}_{\xi \in \partial \Omega}$ is locally uniformly Lipschitz in $x$, with constants independent of $\xi$, the function $w_\varepsilon$ is continuous. By the stability of viscosity subsolutions under locally bounded suprema (see  \cite{89I}, Proposition 2.2), $w_\varepsilon$ is a viscosity subsolution of the special Lagrangian equation. The touching property of the family $\{\omega_\xi^\varepsilon\}$ gives
$$
w_\varepsilon = \varphi \quad \text{on } \partial \Omega.
$$
A direct computation gives
$$
\begin{aligned}
	\omega_\xi^\varepsilon(x) - Q_0(x)
	&= \frac{1}{2}x^T(M_\varepsilon - A)x + (p_\xi^\varepsilon - M_\varepsilon \xi - b)\cdot x + \varphi(\xi) - p_\xi^\varepsilon \cdot \xi + \frac{1}{2}\xi^T M_\varepsilon \xi.
\end{aligned}
$$
Since \(\partial \Omega\) is compact and the vectors \(p_\xi^\varepsilon\) are bounded uniformly for \(0<\varepsilon\le\varepsilon_0\), there exists a constant \(C>0\), independent of \(\xi\) and \(\varepsilon\), such that
$$
|p_\xi^\varepsilon - M_\varepsilon \xi - b| \le C, \qquad 
\left|\varphi(\xi) - p_\xi^\varepsilon \cdot \xi + \frac{1}{2}\xi^T M_\varepsilon \xi\right| \le C, \qquad \xi \in \partial \Omega.
$$
Consequently, using the relation \(|x| \le C(A)\rho(x)\), we obtain
$$
\left|(p_\xi^\varepsilon - M_\varepsilon \xi - b)\cdot x + \varphi(\xi) - p_\xi^\varepsilon \cdot \xi + \frac{1}{2}\xi^T M_\varepsilon \xi\right|
\le C(1+|x|) \le C(1+\rho(x)).
$$
On the other hand,
$$
\frac{1}{2}x^T(M_\varepsilon - A)x = \frac{\varepsilon}{2}x^T G^{-1}x = \frac{\varepsilon}{2}\rho(x)^2.
$$
Thus, uniformly in \(\xi \in \partial \Omega\),
$$
\omega_\xi^\varepsilon(x) - Q_0(x) \le C(1+\rho(x)) + \frac{\varepsilon}{2}\rho(x)^2.
$$
Taking the supremum over $\xi \in \partial \Omega$ yields the upper bound for $w_\varepsilon - Q_0$.

For the reverse inequality, fix some $\xi_0 \in \partial \Omega$. Since \(w_\varepsilon \ge \omega_{\xi_0}^\varepsilon\), the preceding expansion implies
$$
w_\varepsilon(x) - Q_0(x) \ge \omega_{\xi_0}^\varepsilon(x) - Q_0(x) 
\ge -C(1+\rho(x)) + \frac{\varepsilon}{2}\rho(x)^2.
$$
Combining the two estimates proves
\begin{equation}\label{sup}
	|w_\varepsilon(x) - Q_0(x)| \le C_0(1+\rho(x)) + \frac{\varepsilon}{2}\rho(x)^2, \qquad x \in \mathbb{R}^n.
\end{equation}

\begin{proposition}[Viscosity global subsolution]
	\label{prop:global-subsolution}
	Under the assumptions of Theorem \ref{thm:viscosity}, for any given $b \in \mathbb{R}^n$, one can find a constant $c_* > 0$ with the following property: for every $c > c_*$, there exists a function $\underline{u}_c \in C(\overline{\Omega})$ which is a viscosity subsolution of the special Lagrangian equation, satisfies
	$$
	\underline{u}_c = \varphi \quad \text{on } \partial \Omega, \qquad \underline{u}_c \le Q_c,
	$$
	and possesses the asymptotic expansion
	$$
	\underline{u}_c(x) = Q_c(x) + O(|x|^{2-n}) \quad \text{as } |x| \to \infty.
	$$
	Furthermore, the following quantitative bound holds:
	$$
	Q_c(x) - C c^{n-1} |x|^{2-n} \le \underline{u}_c(x) \le Q_c(x), \qquad |x| \ge c,
	$$
	where the constant $C$  is independent of $c$.
\end{proposition}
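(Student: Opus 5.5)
The plan is to glue the boundary subsolution $w_\varepsilon$ constructed above (which equals $\varphi$ on $\partial\Omega$ and satisfies \eqref{sup}) to a far-field subsolution of the form in Lemma \ref{lem:farfield}, by taking a pointwise maximum on an annulus between two ellipsoids. For this phase range no convexity is available, so I would not use Lemma \ref{max-sub}. Instead I use the fact that the maximum of two continuous viscosity subsolutions is a viscosity subsolution for every phase. All parameters will be tied to $c$, and the transition annulus will be $E_{\lambda c}\setminus \overline{E_{ac}}$, with fixed numbers $0<a<\lambda$ independent of $c$.

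\textbf{Step 1 (scaled far-field barrier).} I take
\[
u^-(x)=Q_c(x)-Kc^{n-1}\rho^{2-n}+\delta' c^{n+1}\rho^{-n},
\]
so that $B=Kc^{n-1}$ and $\delta=\delta' c^{n+1}$. The Taylor argument in the proof of Lemma \ref{lem:farfield} must now be checked on $\rho\ge ac$ uniformly in $c$, since its $R_*$ depends on $B$ and $\delta$. Writing $\rho=c s$ with $s\ge a$, one has $|D^2\psi|\lesssim (K+\delta')c^{-1}$, which is small for large $c$. The linear term is $L_A\psi=2n\delta'c^{n+1}\rho^{-n-2}=2n\delta' c^{-1}s^{-n-2}$, while the quadratic remainder is $O(K^2c^{-2}s^{-2n})$. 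Hence $F(D^2u^-)\ge\Theta$ on $\{\rho\ge ac\}$ once $c\ge c_1(K,\delta',a)$. This scaling is exactly what produces the bound $Q_c-Cc^{n-1}|x|^{2-n}\le u^-\le Q_c$ for $|x|\gtrsim c$. It also gives $u^-\le Q_c$, once $\delta'$ is small relative to $Ka^2$.

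\textbf{Step 2 (choice of $\varepsilon$ and comparison on the two ellipsoids).} I fix $\lambda<1/(4C_0)$, where $C_0$ is the constant in \eqref{sup}, which is independent of $\varepsilon$. I also set $\varepsilon=\varepsilon(c)=c^{-1}\lambda^{-2}/4\le\varepsilon_0$ for $c$ large.

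On the outer ellipsoid $\rho=\lambda c$, estimate \eqref{sup} gives
\[
w_\varepsilon\le Q_0+C_0(1+\lambda c)+\tfrac{c}{8}<Q_0+\tfrac{c}{2}
\]
for large $c$. On the other hand $u^-\ge Q_0+c-K\lambda^{2-n}c$, so $w_\varepsilon<u^-$ there provided $K\lambda^{2-n}<1/2$.

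On the inner ellipsoid $\rho=ac$, estimate \eqref{sup} gives $w_\varepsilon\ge Q_0-C_0(1+ac)$, while $u^-\le Q_0+c-Ka^{2-n}c+\delta'a^{-n}c$. Choosing $a$ small relative to $\lambda$ (so that $Ka^{2-n}$ is large even with $K\lambda^{2-n}<1/2$), and then $\delta'$ small, gives $u^-<w_\varepsilon$ there.

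For $c$ large we also have $\overline\Omega\subset E_{ac}$.

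\textbf{Step 3 (gluing).} I define
\[
\underline u_c=
\begin{cases}
w_\varepsilon & \text{on } \overline{E_{ac}}\setminus\Omega,\\
\max\{w_\varepsilon,u^-\} & \text{on } E_{\lambda c}\setminus E_{ac},\\
u^- & \text{outside } E_{\lambda c}.
\end{cases}
\]
By the strict inequalities of Step 2 and continuity, the three definitions agree in neighbourhoods of the two ellipsoids. So $\underline u_c$ is continuous and is locally a viscosity subsolution, hence a subsolution everywhere.

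The remaining properties follow directly. We have $\underline u_c=\varphi$ on $\partial\Omega$ because $w_\varepsilon=\varphi$ there. The inequality $\underline u_c\le Q_c$ holds on the annulus and outside it by Step 1 and the bound $w_\varepsilon< Q_0+c/2$, which propagates inside $E_{\lambda c}$ via \eqref{sup}. The expansion and the quantitative bound for $|x|\ge c$ come from Step 1, after possibly replacing $\lambda c$ by a comparable multiple of $c$ and using $\rho\sim|x|$.

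\textbf{Main obstacle.} The hard part is the parameter balancing in Steps 1–2. The growth $\tfrac{\varepsilon}{2}\rho^2$ of $w_\varepsilon$ forces $\varepsilon\sim c^{-1}$. The linear error $C_0\rho$ forces the outer radius to be at most a small multiple of $c$. The far-field lemma's $R_*$ must also not blow up with $B$. I would resolve all three by the parabolic scaling $\rho=cs$ described above, which shows that every constraint is $c$-independent in the variable $s$.
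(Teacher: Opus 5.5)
Your proposal is correct and follows essentially the same route as the paper. Both glue $w_\varepsilon$ to the far-field barrier $Q_c-B\rho^{2-n}+\delta\rho^{-n}$ by a pointwise maximum of viscosity subsolutions on an ellipsoidal annulus at scale comparable to $c$, with $\varepsilon$, $B\sim c^{n-1}$ and $\delta$ all tied to $c$. The differences are only bookkeeping: the paper takes $\varepsilon_c=R_c^{-n}$, uses the ratio-$2$ annulus $E_{2R_c}\setminus E_{R_c}$ with $B_c$ picked from an interval that is nonempty because $2^{n-2}>1$, and sets $\delta_c=\Lambda c^2R_c^{n-2}$ with $\Lambda$ large; in addition, your rescaling $\rho=cs$ makes explicit the uniformity of $R_*$ in Lemma~\ref{lem:farfield} for $c$-dependent $B,\delta$, which the paper only asserts.
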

\begin{proof}
	Let
	$$
	m := \min_{\partial E_R} (w_\varepsilon - Q_0), \qquad 
	M := \max_{\partial E_{2R}} (w_\varepsilon - Q_0).
	$$
	Set $\varepsilon = R^{-n}$. Then from \eqref{sup}, there exists a constant $K_0 > 0$ such that
	$$
	|m| + |M| \le K_0(1+R).
	$$
	Choose $\tau > 0$ sufficiently small (to be determined below), and choose $\Lambda > 0$ sufficiently large (also to be determined below).
	For $c > 0$, set
	$$
	R_c := \tau c, \qquad \varepsilon_c := R_c^{-n}, \qquad \delta_c := \Lambda c^2 R_c^{n-2}.
	$$
	Write $w_c := w_{\varepsilon_c}$, and set
	$$
	m_c := \min_{\partial E_{R_c}} (w_c - Q_0), \qquad 
	M_c := \max_{\partial E_{2R_c}} (w_c - Q_0).
	$$
	Define
	$$
	B_c^- := (c + \delta_c R_c^{-n} - m_c) R_c^{n-2}, \qquad
	B_c^+ := (c + \delta_c (2R_c)^{-n} - M_c) (2R_c)^{n-2}.
	$$
	A direct computation gives
	$$
	B_c^+ - B_c^- 
	= R_c^{n-2} \left[ (2^{n-2} - 1)c + m_c - 2^{n-2}M_c + (2^{-2} - 1)\delta_c R_c^{-n} \right].
	$$
	 And $R_c = \tau c$, we get
	$$
	B_c^+ - B_c^- 
	\ge R_c^{n-2} \left[ (2^{n-2} - 1)c - K_0(1+\tau c) - C\Lambda \tau^{-2} \right].
	$$
	Choose $\tau > 0$ so small that $(2^{n-2} - 1) - K_0\tau > 0$. Then, after choosing $\Lambda > 0$ and then taking $c$ sufficiently large (say $c \ge c_*$). Hence $B_c^- < B_c^+$ for all $c \ge c_*$.
	Thus there exists $c_* > 0$ such that for every $c \ge c_*$, the interval $[B_c^-, B_c^+]$ is nonempty.
	Fix any $B_c \in [B_c^-, B_c^+]$. Define the far-field subsolution
	$$
	u_\infty^c(x) := Q_c(x) - B_c \rho(x)^{2-n} + \delta_c \rho(x)^{-n}, \qquad \rho(x) \ge R_c.
	$$
	By Lemma \ref{lem:farfield}, for all sufficiently large $c$, $u_\infty^c$ is a smooth subsolution of the special Lagrangian equation in $\mathbb{R}^n \setminus E_{R_c}$. The choice of $B_c$ gives the interface inequalities
	\begin{align}
		u_\infty^c &\le w_c \quad \text{on } \partial E_{R_c}, \label{eq:interface1} \\
		u_\infty^c &\ge w_c \quad \text{on } \partial E_{2R_c}. \label{eq:interface2}
	\end{align}
	Define
	\begin{equation}
		\underline{u}_c(x) :=
		\begin{cases}
			w_c(x), & x \in E_{R_c} \setminus \overline{\Omega}, \\[1.2ex]
			\max\{w_c(x), u_\infty^c(x)\}, & x \in E_{2R_c} \setminus E_{R_c}, \\[1.2ex]
			u_\infty^c(x), & x \in \mathbb{R}^n \setminus E_{2R_c}.
		\end{cases} 
	\end{equation}
	Then we have:
	
	1. $\underline{u}_c$ is continuous and satisfies
	\[
	\sum_{i=1}^n \arctan \lambda_i(D^2\underline{u}_c) \ge \Theta
	\]
	in $\mathbb{R}^n \setminus \overline{\Omega}$ in the viscosity sense, by the maximum stability of viscosity subsolutions.
	
	2. $\underline{u}_c = \varphi$ on $\partial \Omega$, since $\underline{u}_c = w_c = \varphi$ on $\partial \Omega$.
	
	3. For $|x|$ sufficiently large,
	$$
	\underline{u}_c(x) = u_\infty^c(x) = \frac{1}{2}x^T A x + c + O(|x|^{2-n}) \quad (|x| \to \infty).
	$$
	
	Let $$\overline{u}:= Q_c = \frac{1}{2}x^T A x + c.$$
	To apply the Perron method (Lemma \ref{lem:perron}), we need to verify that 
	$\overline{u}$ is a viscosity supersolution and that $\underline{u} \le \overline{u}\) in \(\Omega$. It remains to ensure that $w_c \le Q_c$ in the bounded part. For $x \in E_{2R_c} \setminus \overline{\Omega}$, estimate \eqref{sup} gives
	$$
	w_c(x) - Q_c(x) \le C(1+\rho(x)) + \frac{\varepsilon_c}{2}\rho(x)^2 - c.
	$$
	Since $\rho(x) < 2R_c$, $\varepsilon_c = R_c^{-n}$, and $R_c = \tau c$, we have
	$$
	w_c(x) - Q_c(x) \le C_0 + C_1 \tau c - c.
	$$
	We choose $\tau > 0$ at the beginning so small that $C_1\tau \le 1/2$. 
	After increasing $c_*$ such that $C_0 \le c/2$, the last quantity is nonpositive. 
	Hence $w_c \le Q_c$ in $E_{2R_c} \setminus \overline{\Omega}$. 
	Combined with the far-field inequality $u_\infty^c \le Q_c$, we obtain $\underline{u}_c \le Q_c$ in $\Omega$.
	Recall that $G > 0$, and let
	$$
	\alpha_G := \lambda_{\min}(G^{-1})^{1/2} = \lambda_{\max}(G)^{-1/2},
	$$
	then $\rho(x) \ge \alpha_G |x|$. We choose $\tau_0 > 0$ so small that $\tau_0 \le \alpha_G/2$. Then, for every $0 < \tau \le \tau_0$ and $R_c = \tau c$,
	$$
	|x| \ge c \quad \Longrightarrow \quad \rho(x) \ge 2\tau |x| \ge 2R_c.
	$$
	Hence, for $|x| \ge c$, we have $\underline{u}_c = u_\infty^c$. By Lemma \ref{lem:farfield}, we obtain
	$$
	Q_c(x) \ge \underline{u}_c(x) \ge Q_c(x) - C c^{n-1} |x|^{2-n}, \qquad |x| \ge c,
	$$
	and
	$$
	\underline{u}_c(x) = Q_c(x) + O(|x|^{2-n}) \quad \text{as } |x| \to \infty.
	$$
\end{proof}

\begin{proof}[Proof of Theorem \ref{thm:viscosity}]

We shall verify the hypotheses of the Perron method (Lemma \ref{lem:perron}).

First, by Proposition \ref{prop:global-subsolution}, there exists $c_* > 0$ such that for every $c > c_*$, there is a viscosity subsolution $\underline{u}_c \in C(\overline{\Omega})$ of the special Lagrangian equation satisfying
$$
\underline{u}_c = \varphi \quad \text{on } \partial \Omega, \qquad 
\underline{u}_c \le Q_c \quad \text{in } \mathbb{R}^n \setminus \overline{\Omega},
$$
and
\begin{equation}\label{Q}
\underline{u}_c(x) = Q_c(x) + O(|x|^{2-n}) \quad \text{as } |x| \to \infty. 
\end{equation}
Second, the quadratic polynomial
$$
\overline{u}(x) := Q_c(x) = \frac{1}{2}x^T A x + b\cdot x + c
$$
is a classical solution of the special Lagrangian equation, since $D^2\overline{u}=A$ and $A\in\mathcal{A}_\Theta$. Thus $\overline{u}$ is a viscosity supersolution. Increasing $c$ if necessary, we have $\overline{u} \ge \underline{u}_c$ in $\Omega$ by $\underline{u}_c \le Q_c$ and $\overline{u}=Q_c$.
Moreover, from \eqref{Q},
$$
\lim_{|x|\to\infty} (\overline{u} - \underline{u}_c)(x) = 0.
$$
Thus all the hypotheses of Lemma \ref{lem:perron} are satisfied with $ \mathbb{R}^n \setminus \overline{\Omega}$. Applying Lemma \ref{lem:perron}, we obtain a unique viscosity solution $u \in C^0(\overline{\Omega})$ of the special Lagrangian equation such that
$$
\underline{u}_c \le u \le \overline{u} \quad \text{in } \mathbb{R}^n \setminus \overline{\Omega}, \qquad u = \varphi \quad \text{on } \partial \Omega,
$$
and
$$
u(x) = Q_c(x) + O(|x|^{2-n}) \quad \text{as } |x| \to \infty.
$$

By the Perron method (Lemma \ref{lem:perron}) and the comparison principle (Lemma \ref{lem:comparison}), the function $u$ is continuous, solves the special Lagrangian equation in the viscosity sense, and satisfies the prescribed quadratic asymptotics 
$$
u(x) = Q_c(x) + O(|x|^{2-n}) \quad \text{as } |x| \to \infty.
$$
Uniqueness follows from the comparison principle.
\end{proof}

\section{Smooth strict global subsolutions}

\begin{lemma}
	\label{lem:local-subsolution}
	Let \(\Omega\subset\mathbb R^n\) be a smooth bounded strictly convex domain, and let 
	$\varphi\in C^\infty(\partial \Omega)$. Let $U\subset\mathbb R^n$ be a smooth bounded 
	domain such that $\overline \Omega\subset U$, and let $\Phi\in C^\infty(\overline U\setminus \Omega)$
	be a smooth extension of $\varphi$, namely
	$$
	\Phi=\varphi \quad \text{on } \partial \Omega.
	$$
	Let $d(x):=\operatorname{dist}(x,\partial \Omega)$ be the  distance function, 
	positive in \(\Omega\). Then, for every phase
	$$
	\frac{(n-2)\pi}{2}\leq\Theta<\frac{n\pi}{2},
	$$
	there exists a sufficiently large constant \(N>1\), depending on \(n,\Theta,\Omega,U,\Phi\), such that the function
	\begin{equation}
		v_0(x):=\Phi(x)+(d(x)+1)^N-1, \qquad x\in \overline U\setminus \Omega,
		\label{eq:v0-li-xiao}
	\end{equation}
	satisfies
	\begin{enumerate}
		\item[(i)] $v_0=\varphi$ on $\partial \Omega$;
		\item[(ii)] $\displaystyle F(D^2v_0):=\sum_{i=1}^n \arctan \lambda_i(D^2v_0) > \Theta$ 
		in $\overline U\setminus \overline \Omega$.
	\end{enumerate}
\end{lemma}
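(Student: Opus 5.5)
The plan is to make every eigenvalue of $D^2v_0$ large and positive, uniformly on $\overline U\setminus\overline\Omega$. Since $\Theta<n\pi/2$, the conclusion $F(D^2v_0)>\Theta$ then follows from monotonicity of $\arctan$. Property (i) is immediate: $d=0$ on $\partial\Omega$, so $v_0=\Phi+1-1=\varphi$ there. The argument does not use the lower bound $\Theta\ge (n-2)\pi/2$; only $\Theta<n\pi/2$ is needed.

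\emph{Step 1: the distance function.} Here $d$ is the distance to $\partial\Omega$ measured outside $\Omega$, since $v_0$ is defined on $\overline U\setminus\Omega$. Because $\Omega$ is smooth and convex, the nearest-point projection onto $\overline\Omega$ is unique and smooth on $\mathbb R^n\setminus\overline\Omega$. Hence $d$ is smooth on $\overline U\setminus\Omega$ (up to $\partial\Omega$ from outside) and satisfies $|Dd|=1$.

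Fix $x$ with nearest point $\xi\in\partial\Omega$, and choose principal coordinates at $\xi$ with curvatures $\kappa_1,\dots,\kappa_{n-1}>0$. The standard computation gives
$$
D^2d(x)=\mathrm{diag}\Big(\frac{\kappa_1}{1+\kappa_1 d},\dots,\frac{\kappa_{n-1}}{1+\kappa_{n-1}d},0\Big),\qquad Dd(x)=e_n .
$$
The curvatures are bounded, $0<\kappa_{\min}\le\kappa_i\le\kappa_{\max}$, by strict convexity and compactness of $\partial\Omega$, and $d\le \operatorname{diam}U$ on $U$. Therefore
$$
D^2d+Dd\otimes Dd\ \ge\ c_0 I,\qquad c_0:=\min\Big\{1,\ \frac{\kappa_{\min}}{1+\kappa_{\max}\operatorname{diam}U}\Big\}>0,
$$
with $c_0$ depending only on $\Omega$ and $U$. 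I expect this step to be the main technical point: the tangential positivity comes from strict convexity, and it must stay uniform up to $\partial U$.

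\emph{Step 2: Hessian of $v_0$.} Direct differentiation gives
$$
D^2v_0=D^2\Phi+N(d+1)^{N-1}D^2d+N(N-1)(d+1)^{N-2}\,Dd\otimes Dd .
$$
Take $N\ge2$. Since $d+1\ge1$, $D^2d\ge0$ and $Dd\otimes Dd\ge0$, we have $N(d+1)^{N-1}\ge N$ and $N(N-1)(d+1)^{N-2}\ge N$. Hence the last two terms together are at least $N(D^2d+Dd\otimes Dd)\ge Nc_0 I$. With $C_\Phi:=\|D^2\Phi\|_{L^\infty(\overline U\setminus\Omega)}$, this gives
$$
\lambda_i(D^2v_0)\ \ge\ Nc_0-C_\Phi\qquad\text{for all } i .
$$

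\emph{Step 3: conclusion.} Since $\Theta<n\pi/2$, choose $N$ so large that
$$
n\arctan(Nc_0-C_\Phi)>\Theta .
$$
Monotonicity of $\arctan$ then gives
$$
F(D^2v_0)=\sum_{i=1}^n\arctan\lambda_i(D^2v_0)\ \ge\ n\arctan(Nc_0-C_\Phi)>\Theta
$$
on $\overline U\setminus\overline\Omega$. The choice of $N$ depends only on $n,\Theta,\Omega,U,\Phi$, which proves (ii).
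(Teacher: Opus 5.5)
Your proof is correct, and it takes a different route from the paper's, with an advantage that matters later.

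The paper computes $D^2v_0$ only at points of $\partial\Omega$: the tangential block is $\Phi_{ab}+N\kappa_a\delta_{ab}$ and the normal entry is $\Phi_{nn}+N(N-1)$. It concludes $F(D^2v_0)>\Theta$ on $\partial\Omega$ for large $N$, then extends the strict inequality by continuity ``after possibly shrinking $U$'' to a thin collar. That collar depends on $N$. So the paper really proves the lemma only on some small neighborhood of $\partial\Omega$, not on the prescribed $U$ with $N=N(U)$ as stated.

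You instead use the global structure of the exterior distance function of a strictly convex body. The map $(\xi,s)\mapsto \xi+s\nu(\xi)$ is a diffeomorphism onto $\mathbb R^n\setminus\overline\Omega$, so $d$ is smooth on all of $\overline U\setminus\Omega$ and $D^2d=\operatorname{diag}(\kappa_i/(1+\kappa_i d),0)\ge 0$ everywhere. Together with $(d+1)^{N-1},(d+1)^{N-2}\ge 1$, this gives the uniform bound $\lambda_i(D^2v_0)\ge Nc_0-C_\Phi$ on the whole given region. Your $N$ is explicit in $\kappa_{\min}$, $\kappa_{\max}$, $\operatorname{diam}U$ and $\|D^2\Phi\|_\infty$.

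This is the version the paper actually needs. In the gluing with $v_{\mathrm{br}}$ it requires $\overline\Omega\subset E_{r_1}$ and $E_{r_2}\subset U$. Then $U$ must contain $E_{r_2}\setminus\Omega$, which in general does not fit inside a thin collar of $\partial\Omega$; think of $\Omega$ a ball and $E_R$ a very elongated ellipsoid.

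The paper's argument is shorter and uses only local information at $\partial\Omega$, at the price of shrinking $U$. Yours uses global convexity of $\Omega$ and in return proves the lemma exactly as stated, for an arbitrary bounded $U$.
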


\begin{proof}
	Take $U$ to be a sufficiently small tubular neighborhood of $\partial \Omega$ so that 
	$d$ is smooth in $\overline U\setminus \Omega$. Let $\xi\in\partial \Omega$ and choose an 
	orthonormal frame $\{e_1,\dots,e_{n-1},\nu\}$ at $\xi$, where $\nu$ is the 
	exterior unit normal and $\{e_a\}$ are the principal directions. Let $\kappa_a>0$
	be the principal curvatures with respect to the interior normal. On $\partial \Omega$,
	$$
	d=0,\qquad \nabla d=\nu,\qquad D^2d(e_a,e_a)=\kappa_a,\qquad D^2d(\nu,\nu)=0.
	$$
	A direct computation gives
	\begin{equation}
		D^2v_0\big|_{\partial \Omega}
		=
		\begin{pmatrix}
			\Phi_{ab}+N\kappa_a\delta_{ab} & \Phi_{an}\\
			\Phi_{na} & \Phi_{nn}+N(N-1)
		\end{pmatrix}.
		\label{eq:v0-hessian}
	\end{equation}
	Hence the eigenvalues of $D^2v_0\big|_{\partial \Omega}$ satisfy
	$$
	\lambda_a=N\kappa_a+O(1),\quad a=1,\dots,n-1,\qquad
	\lambda_n=N(N-1)+O(1).
	$$
	Since $\kappa_a>0$, all eigenvalues tend to $+\infty$ as $N\to\infty$. Therefore
	$$
	F(D^2v_0)\big|_{\partial \Omega}
	=\sum_{a=1}^{n-1}\arctan(N\kappa_a+O(1))+\arctan(N(N-1)+O(1))
	\longrightarrow \frac{n\pi}{2}.
	$$
	Since $\Theta<n\pi/2$, choose $N$ large so that $F(D^2v_0)>\Theta$ on $\partial \Omega$. By continuity, after possibly shrinking $U$, 
	the strict inequality holds throughout $\overline U\setminus\overline \Omega$. 
	Finally, 
	$v_0=\varphi$ on $\partial \Omega$ since $d=0$ and $\Phi=\varphi$ there.
\end{proof}

\begin{lemma}\label{brg}
	Let $a > 0$ and $C_b \in \mathbb{R}$. Define
	$$
	u_{\mathrm{mid}}(x) := Q_0(x) + a\rho(x) + C_b.
	$$
	Then $u_{\mathrm{mid}}$ satisfies
	$$
	\sum_{i=1}^n \arctan \lambda_i(D^2 u_{\mathrm{mid}}) \ge \Theta.
	$$
\end{lemma}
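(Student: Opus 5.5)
The plan is to show that $a\rho$ is convex away from the origin, so that $D^2u_{\mathrm{mid}} \ge A$ there, and then conclude from the monotonicity of $F$. At the origin, where $\rho$ is not differentiable, we argue in the viscosity sense.

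\textbf{Away from the origin.} First compute $D^2 u_{\mathrm{mid}} = A + a D^2\rho$ for $x\neq 0$. Take $\alpha=1$ in the derivative formula from the proof of Lemma \ref{lem:farfield}:
$$
D^2\rho = \rho^{-1}G^{-1} - \rho^{-3}(G^{-1}x)\otimes(G^{-1}x).
$$
This matrix is positive semidefinite. To see it, apply Cauchy--Schwarz in the inner product induced by $G^{-1}$: for any $\zeta\in\mathbb{R}^n$,
$$
(\zeta^T G^{-1}x)^2 \le (\zeta^T G^{-1}\zeta)(x^T G^{-1}x) = \rho^2\,\zeta^TG^{-1}\zeta.
$$
Hence $\zeta^T D^2\rho\,\zeta = \rho^{-3}\bigl(\rho^2\zeta^TG^{-1}\zeta - (\zeta^TG^{-1}x)^2\bigr) \ge 0$. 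Equivalently, $\rho$ is the Euclidean norm $|y|$ composed with the linear map $y=G^{-1/2}x$, and is therefore convex.

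Since $a>0$, this gives $D^2u_{\mathrm{mid}} \ge A$ for $x\ne0$. Weyl's monotonicity then gives $\lambda_i(D^2u_{\mathrm{mid}})\ge\lambda_i(A)$ for each $i$. Because $\arctan$ is increasing,
$$
F(D^2u_{\mathrm{mid}}) \ge F(A)=\Theta \quad \text{for } x\neq0,
$$
classically. This is exactly the monotonicity of $F$ already used in Lemma \ref{max-sub}.

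\textbf{At the origin.} This is the only delicate point, since $\rho$ is not $C^2$ at $x=0$. Let $\phi\in C^2$ touch $u_{\mathrm{mid}}$ from above at $0$. Then $\phi - Q_0 - C_b$ touches the convex function $a\rho$ from above at $0$. But $a\rho$ grows linearly in every direction, so no $C^2$ function can touch it from above at its cone vertex; the viscosity subsolution condition at $0$ is therefore vacuous.

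One caution: $u_{\mathrm{mid}}$ is certainly not a subsolution if one tests from below, but tests from above are all the viscosity subsolution definition requires. In any case, the intended application only uses $u_{\mathrm{mid}}$ on a region away from the origin, namely outside $\Omega$ or on an annulus, so the classical computation above already suffices there.
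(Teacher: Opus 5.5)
Your proof is correct and follows the paper's route. Both arguments use the convexity of $\rho$ to get $D^2u_{\mathrm{mid}}\ge A$ and then conclude by monotonicity of $F$; you prove the convexity via Cauchy--Schwarz in the $G^{-1}$ inner product, where the paper computes the eigenvalues of $D_y^2|y|$ after $y=G^{-1/2}x$. Your extra viscosity argument at the origin (no $C^2$ function can touch the cone $a\rho$ from above there) correctly covers a point the paper's computation silently excludes.
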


\begin{proof}
	Since \(u_{\mathrm{mid}}(x)=Q_0(x)+a\rho(x)+C_b\), we have
	$$
	D^2u_{\mathrm{mid}}(x)=A+aD^2\rho(x).
	$$
	Let $y=G^{-1/2}x$. Then $\rho(x)=|y|$, and a direct computation gives
	$$
	D_y^2\rho=\frac{I}{|y|}-\frac{y\otimes y}{|y|^3}.
	$$
	Its eigenvalues are
	$$
	\lambda_1=\cdots=\lambda_{n-1}=\frac{1}{|y|}>0,\qquad \lambda_n=0,
	$$
	so \(D_y^2\rho\ge0\). Returning to the \(x\)-coordinates,
	$$
	D_x^2\rho=G^{-1/2}\left(\frac{I}{|y|}-\frac{y\otimes y}{|y|^3}\right)G^{-1/2}\ge0.
	$$
	Hence \(D^2u_{\mathrm{mid}}(x)\ge A\). Since \(F(M)=\sum_i\arctan\lambda_i(M)\) is monotone, it follows that
	$$
	F(D^2u_{\mathrm{mid}})\ge F(A)=\Theta.
	$$
	Moreover, if \(D^2\rho\neq0\), the inequality is strict, since \(a>0\).
\end{proof}

We now glue the local subsolution \(v_0\) constructed in Lemma \eqref{lem:local-subsolution} to the bridge 
function \(v_{\mathrm{br}}\) defined in Lemma \eqref{brg}. Choose radii \(0<r_1<r_2\) such that
$$
\overline \Omega \subset E_{r_1}, \qquad E_{r_2} \subset U,
$$
where \(U\) is the tubular neighborhood on which \(v_0\) is defined.

Define
$$
m_1 := \min_{\partial E_{r_1}} (v_0 - Q_0),
\qquad
M_2 := \max_{\partial E_{r_2}} (v_0 - Q_0).
$$
Fix \(\mu>0\) and choose \(a>0\) so large that
$$
a(r_2 - r_1) > M_2 - m_1 + 6\mu.
$$
Set
$$
C_b := m_1 - 2\mu - ar_1.
$$
Then on \(\partial E_{r_1}\),
$$
v_{\mathrm{br}} - Q_0 = ar_1 + C_b = m_1 - 2\mu \le v_0 - Q_0 - 2\mu,
$$
and on \(\partial E_{r_2}\),
$$
v_{\mathrm{br}} - Q_0 = ar_2 + C_b = m_1 - 2\mu + a(r_2 - r_1) > M_2 + 4\mu \ge v_0 - Q_0 + 4\mu.
$$
These inequalities are strict with a positive margin. Hence, by continuity, the same 
inequalities hold in neighborhoods of the two interfaces within \(E_{r_2}\setminus E_{r_1}\). 
That is, there exist neighborhoods \(\mathcal N_1\) of \(\partial E_{r_1}\) and 
\(\mathcal N_2\) of \(\partial E_{r_2}\), both contained in \(E_{r_2}\setminus E_{r_1}\), 
such that
$$
v_0 \ge v_{\mathrm{br}} + \mu \quad \text{in } \mathcal N_1,
$$
and
$$
v_{\mathrm{br}} \ge v_0 + \mu \quad \text{in } \mathcal N_2.
$$
Let \(M_\mu\) be the regularized maximum from Lemma \eqref{max-sub}. Define
$$
v_1(x) :=
\begin{cases}
	v_0(x), & x \in E_{r_1} \setminus \overline \Omega, \\[1.2ex]
	M_\mu(v_0(x), v_{\mathrm{br}}(x)), & x \in E_{r_2} \setminus E_{r_1}, \\[1.2ex]
	v_{\mathrm{br}}(x), & x \in \mathbb R^n \setminus E_{r_2}.
\end{cases}
$$
By Lemma \eqref{max-sub}, \(v_1\in C^\infty(\mathbb R^n\setminus \Omega)\) is a smooth subsolution. 
Near \(\partial E_{r_1}\), the regularized maximum equals \(v_0\), and near 
\(\partial E_{r_2}\), it equals \(v_{\mathrm{br}}\), so no loss of smoothness occurs 
at the interfaces. Moreover, \(v_1=\varphi\) on \(\partial \Omega\).

\begin{lemma}
	\label{far-gluing}
	There exist constants \(\tau_0>0\), \(\Lambda>0\), and \(c_0>0\) such that 
	for every \(0<\tau\le\tau_0\) and every \(c\ge c_0\), setting
	$$
	R_c:=\tau c,\qquad \delta_c:=\Lambda c^2 R_c^{n-2},
	$$
	there exists \(B_c>0\) satisfying
	$$
	\frac12 cR_c^{n-2}\le B_c\le CcR_c^{n-2},
	$$
	such that
	$$
	u_\infty^c(x):=Q_c(x)-B_c\rho(x)^{2-n}+\delta_c\rho(x)^{-n}
	$$
	is a smooth far-field subsolution of the special Lagrangian equation in 
	\(\mathbb R^n\setminus E_{R_c}\), and satisfies the interface gluing conditions
	\begin{equation}
		u_\infty^c \le v_{\mathrm{br}} - 2\mu \quad \text{on } \partial E_{R_c},
		\label{eq:inner-interface}
	\end{equation}
	\begin{equation}
		u_\infty^c \ge v_{\mathrm{br}} + 2\mu \quad \text{on } \partial E_{2R_c}.
		\label{eq:outer-interface}
	\end{equation}
\end{lemma}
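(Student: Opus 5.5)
This will follow the interval argument of Proposition \ref{prop:global-subsolution}, now with $w_c$ replaced by the fixed bridge function $v_{\mathrm{br}} = Q_0 + a\rho + C_b$. This makes things simpler, since $v_{\mathrm{br}}-Q_0 = a\rho + C_b$ is constant on each $\partial E_R$ and does not depend on $c$.

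On $\partial E_{R_c}$ the inner condition \eqref{eq:inner-interface} reads
$$c - B_cR_c^{2-n} + \delta_c R_c^{-n} \le aR_c + C_b - 2\mu.$$
This is equivalent to $B_c \ge B_c^- := \bigl(c + \delta_c R_c^{-n} - aR_c - C_b + 2\mu\bigr)R_c^{n-2}$. On $\partial E_{2R_c}$ the outer condition \eqref{eq:outer-interface} is equivalent to $B_c \le B_c^+ := \bigl(c + \delta_c(2R_c)^{-n} - 2aR_c - C_b - 2\mu\bigr)(2R_c)^{n-2}$. Using $\delta_c R_c^{-n} = \Lambda c^2 R_c^{-2} = \Lambda\tau^{-2}$, both brackets are $c + O(\tau c) + O(\Lambda\tau^{-2}) + O(1)$.

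First I fix $\tau_0$ small so that $a\tau_0$ is small compared with $2^{n-2}-1$, for instance $2^{n-1}a\tau_0 \le \tfrac14(2^{n-2}-1)$. Then $B_c^+ - B_c^- = R_c^{n-2}\bigl[(2^{n-2}-1)c - O(a\tau c) - O(\Lambda\tau^{-2}+|C_b|+\mu)\bigr]$, which is positive once $c \ge c_0(\tau,\Lambda,a,C_b,\mu)$. I then take $B_c := \max\{B_c^-, \tfrac12 cR_c^{n-2}\}$ if this does not exceed $B_c^+$. Since $B_c^+ \ge \tfrac12 cR_c^{n-2}\cdot 2^{n-2}$ for large $c$, this is indeed the case. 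The bounds $\tfrac12 cR_c^{n-2} \le B_c \le B_c^+ \le CcR_c^{n-2}$ follow, with $C$ depending only on $n$ and $a$.

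The main obstacle is the subsolution property: Lemma \ref{lem:farfield} is stated with $B,\delta$ fixed, but here both scale with $c$. I would redo its proof in rescaled form. We have $|D^2\psi| \le C_1B_c\rho^{-n} + C_2\delta_c\rho^{-n-2}$, so on $\rho \ge R_c$ this is at most $C(c\tau^{n-2}c^{n-2}(\tau c)^{-n} + \Lambda\tau^{-2}(\tau c)^{-2}) = C(\tau^{-2}+\Lambda\tau^{-4})c^{-1}$. This quantity is $\le \eta$ for large $c$, so the Taylor expansion \eqref{taylor} applies. We then need $2n\delta_c\rho^{-n-2} \ge C_F|D^2\psi|^2$, where $C_F$ is the constant in \eqref{taylor}. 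The term $B_c^2\rho^{-2n}$ is the dangerous one, and it suffices that $2n\Lambda c^2R_c^{n-2}\rho^{-n-2} \ge 2C_F C_1^2 C^2c^2R_c^{2n-4}\rho^{-2n}$. This is equivalent to $\rho^{n-2} \ge (C_F C_1^2C^2/(n\Lambda))R_c^{n-2}$, and it holds for all $\rho \ge R_c$ once $\Lambda$ is chosen large, depending only on $n,\Theta,A,a$. The $\delta_c^2$ cross terms are of lower order, since $\delta_c\rho^{-2} \lesssim \Lambda\tau^{-2}c^{-2}\cdot c^{n}\ldots$ relative to the linear term; one checks that $\delta_c\rho^{-n-2}\cdot\rho^{-n-2}\delta_c \ll \delta_c\rho^{-n-2}$ since $\delta_c R_c^{-n-2} = \Lambda\tau^{-4}c^{-2} \to 0$.

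The order of choices is therefore: first $\Lambda$, independent of $\tau$ and $c$; then $\tau_0$; then $c_0$. The order matters because $\Lambda$ enters $c_0$ only through the harmless bounded term $\Lambda\tau^{-2}$.
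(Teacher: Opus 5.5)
Your proof is correct. The interface part is the paper's argument: you reduce the two conditions to $B_c^-\le B_c\le B_c^+$ using $\delta_cR_c^{-n}=\Lambda\tau^{-2}$, and show $B_c^+-B_c^->0$ because $(2^{n-2}-1)c$ dominates the $O(a\tau c)$ and $O(\Lambda\tau^{-2})$ terms. Your explicit smallness condition on $\tau_0$ and the choice $B_c=\max\{B_c^-,\tfrac12 cR_c^{n-2}\}$ make the paper's ``$O(1)+O(R_c)$'' step and its lower bound on $B_c$ transparent.

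The real difference is the subsolution property. The paper simply invokes Lemma \ref{lem:farfield}. That lemma's threshold $R_*$ depends on $B$ and $\delta$, whereas here $B_c\sim\tau^{n-2}c^{n-1}$ and $\delta_c=\Lambda\tau^{n-2}c^{n}$ grow with $c$ while $R=\tau c$, so it cannot be used as a black box. Your rescaled Taylor argument supplies the missing justification. The bound $|D^2\psi|\lesssim(\tau^{-2}+\Lambda\tau^{-4})c^{-1}$ on $\rho\ge R_c$ validates the expansion. The requirement $2n\delta_c\rho^{-n-2}\gtrsim B_c^2\rho^{-2n}$ reduces to $\rho^{n-2}\ge C\Lambda^{-1}R_c^{n-2}$, which is scale invariant. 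This shows that the subsolution step is exactly where $\Lambda$ must be taken large. Fixing $\Lambda$ first is consistent, because the constant in $B_c\le CcR_c^{n-2}$ can be taken independent of $\Lambda$ once $c\gg\Lambda\tau^{-2}$. When you write it out, split the linear term in half so the $B_c^2$ and $\delta_c^2$ pieces are absorbed separately. Also replace the garbled sentence on cross terms by the criterion $\delta_cR_c^{-n-2}=\Lambda\tau^{-4}c^{-2}\to0$, which you then state correctly.

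One caveat, which the paper's proof shares: your $c_0$ depends on $\tau$, through $\Lambda\tau^{-2}$ and $\Lambda\tau^{-4}$. What you actually prove is that for each fixed $\tau\in(0,\tau_0]$ there is $c_0(\tau)$ such that the conclusions hold for all $c\ge c_0(\tau)$. The version with $c_0$ uniform in $\tau$, as the quantifiers literally read, is false. Indeed, $B_c^+-B_c^-$ contains the term $-\tfrac34\Lambda\tau^{-2}R_c^{n-2}$, so for fixed $c$ and $\tau\to0$ the interval for $B_c$ is empty. This is harmless, since $\tau$ is fixed once in the construction of $\underline u_c$. You should still state the order of choices as: first $\Lambda$, then $\tau_0$, then $c_0(\tau)$ for each $\tau\le\tau_0$.
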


\begin{proof}
	On \(\partial E_{R_c}\), we have \(\rho=R_c\). Since 
	\(v_{\mathrm{br}}=Q_0+a\rho+C_b\), the condition 
	\(u_\infty^c\le v_{\mathrm{br}}-2\mu\) is equivalent to
	\begin{equation}
		B_c \ge \left(c+\delta_cR_c^{-n}-aR_c-C_b+2\mu\right)R_c^{n-2}.
		\label{eq:theta-lower}
	\end{equation}
	Since \(R_c=\tau c\) and \(\delta_cR_c^{-n}=\Lambda\tau^{-2}\), the right-hand side 
	of \eqref{eq:theta-lower} equals
	
	$$
	\left[(1-a\tau)c+\Lambda\tau^{-2}-C_b+2\mu\right]R_c^{n-2}.
	$$
	
	Choosing \(\tau>0\) so small that \(a\tau\le 1/4\), we obtain
	\begin{equation}
		\left(c+\delta_cR_c^{-n}-aR_c-C_b+2\mu\right)R_c^{n-2}
		\ge \frac12 cR_c^{n-2}
		\label{eq:theta-lower-estimate}
	\end{equation}
	for all sufficiently large \(c\).

	On \(\partial E_{2R_c}\), we have \(\rho=2R_c\). The condition 
	\(u_\infty^c\ge v_{\mathrm{br}}+2\mu\) is equivalent to
	\begin{equation}
		B_c \le \left(c+\delta_c(2R_c)^{-n}-2aR_c-C_b-2\mu\right)(2R_c)^{n-2}.
		\label{eq:theta-upper}
	\end{equation}
	Since \(\delta_c(2R_c)^{-n}=\Lambda\tau^{-2}2^{-n}\), the right-hand side of 
	\eqref{eq:theta-upper} equals
	$$
	\left[(1-2a\tau)c+\Lambda\tau^{-2}2^{-n}-C_b-2\mu\right](2R_c)^{n-2}.
	$$
	Thus, for some constant \(C>0\) independent of \(c\),
	\begin{equation}
		\left[(1-2a\tau)c+\Lambda\tau^{-2}2^{-n}-C_b-2\mu\right](2R_c)^{n-2}
		\le CcR_c^{n-2}.
		\label{eq:theta-upper-estimate}
	\end{equation}
	Combining \eqref{eq:theta-lower-estimate} and \eqref{eq:theta-upper-estimate}, 
	if we choose \(B_c\) in the interval
	\begin{equation}
		\left[
		\left(c+\delta_cR_c^{-n}-aR_c-C_b+2\mu\right)R_c^{n-2},\;
		\left(c+\delta_c(2R_c)^{-n}-2aR_c-C_b-2\mu\right)(2R_c)^{n-2}
		\right],
		\label{eq:theta-interval}
	\end{equation}
	then both interface conditions hold.
	
	It remains to verify that the interval \eqref{eq:theta-interval} is nonempty. 
	Indeed, the difference between the upper and lower endpoints is
	\begin{equation}
		\begin{aligned}
			&\left(c+\delta_c(2R_c)^{-n}-2aR_c-C_b-2\mu\right)(2R_c)^{n-2} \\
			&\quad - \left(c+\delta_cR_c^{-n}-aR_c-C_b+2\mu\right)R_c^{n-2} \\
			&= R_c^{n-2}\left[(2^{n-2}-1)c + O(1) + O(R_c)\right],
		\end{aligned}
		\label{eq:interval-difference}
	\end{equation}
	which is positive for \(c\) sufficiently large, since \(2^{n-2}-1>0\) for \(n\ge3\). 
	Therefore, the interval is nonempty for all sufficiently large \(c\).
	
	Finally, by Lemma \ref{lem:farfield}, \(u_\infty^c\) is a smooth subsolution 
	of the special Lagrangian equation in \(\mathbb R^n\setminus E_{R_c}\) for all 
	sufficiently large \(c\), provided \(B_c\) and \(\delta_c\) satisfy the 
	smallness conditions in that lemma. This completes the proof.
\end{proof}

We now define the second gluing function
\begin{equation}
	v_2(x) :=
	\begin{cases}
		v_{\mathrm{br}}(x), & x \in E_{R_c}\setminus \overline \Omega, \\[1.2ex]
		M_\mu(v_{\mathrm{br}}(x), u_\infty^c(x)), & x \in E_{2R_c}\setminus E_{R_c}, \\[1.2ex]
		u_\infty^c(x), & x \in \mathbb R^n\setminus E_{2R_c}.
	\end{cases}
	\label{eq:v2}
\end{equation}
Finally,   define 
\begin{equation}
	\underline u_c(x) :=
	\begin{cases}
		v_1(x), & x \in E_{R_c}\setminus \overline \Omega, \\[1.2ex]
		v_2(x), & x \in E_{2R_c}\setminus E_{R_c}, \\[1.2ex]
		u_\infty^c(x), & x \in \mathbb R^n\setminus E_{2R_c}.
	\end{cases}
	\label{eq:underline-u}
\end{equation}
 By Lemmas \ref{lem:local-subsolution}, \ref{brg} and \ref{far-gluing}, 
  \(\underline u_c\) is a smooth subsolution of 
 the special Lagrangian equation in \(\Omega\), i.e.,
 \begin{equation}
 	F(D^2\underline u_c)\ge \Theta \quad \text{in } \mathbb{R}^n \setminus \overline{\Omega}.
 \end{equation}
 Moreover, \(\underline u_c=\varphi\) on \(\partial \Omega\).

It remains to verify that the global subsolution constructed above lies below the 
quadratic polynomial \(Q_c\). We check this in each region.

First, the near subsolution \(v_0\) and the first gluing region are contained 
in the fixed bounded set \(E_{r_2}\setminus \Omega\). Hence there exists a constant 
\(C_0>0\), independent of the large parameter \(c\), such that
\begin{equation}
	v_0-Q_0\le C_0 \quad \text{in } E_{r_2}\setminus \Omega.
\end{equation}
Taking \(c>C_0+2\mu\), we obtain
\begin{equation}
	v_0\le Q_c-2\mu \quad \text{in } E_{r_2}\setminus \Omega.
\end{equation}
Second, for the bridge function \(v_{\mathrm{br}}=Q_0+a\rho+C_b\), we compute
\begin{equation}
	v_{\mathrm{br}}-Q_c = a\rho+C_b-c.
\end{equation}
Since \(\rho\le 2R_c=2\tau c\) on the bridge region, the choice \(2a\tau\le 1/2\) and 
a further increase of \(c\) give
\begin{equation}
	v_{\mathrm{br}}\le Q_c-2\mu \quad \text{for } \rho\le 2R_c.
\end{equation}
Third, in the far-field region, Lemma \ref{lem:farfield} gives
\begin{equation}
	u_\infty^c\le Q_c \quad \text{in } \mathbb R^n\setminus E_{R_c}.
\end{equation}

Finally, by Lemma \ref{lem:regmax}, the regularized maximum increases the ordinary 
maximum by at most \(\mu\). In each gluing region, the two relevant branches have 
been arranged to lie below \(Q_c-2\mu\). Therefore
\begin{equation}
	\underline u_c\le Q_c \quad \text{in } \Omega.
\end{equation}

\section{The  bounded  problems}\label{sectionbp}
Let \(S > 2R_c\) be arbitrary and set
$\Omega_S := E_S \setminus\overline \Omega.$
On \(\Omega_S\) we consider the following bounded Dirichlet problem:
\begin{equation}\label{bounded}
	\begin{cases}
		\displaystyle \sum_{i=1}^n \arctan \lambda_i(D^2u_S) = \Theta, & x\in \Omega_S, \\[1.2ex]
		u_S=\varphi, & x\in \partial \Omega, \\[1.2ex]
		u_S=u_\infty^c, & x\in \partial E_S.
	\end{cases}
\end{equation}

	For the Dirichlet problem on annular domains,
	the estimates on the inner boundary \(\partial \Omega\) are naturally independent of 
	\(S\), since the geometry of \(\partial \Omega\) is fixed. For the outer boundary 
	\(\partial E_S\), however, the standard approach, as in \cite{BJ26} Lemma 5.4 and 
	\cite{LX26} Section 3.3.2, is to apply a scaling transformation that fixes the 
	outer boundary. The resulting boundary estimates then follow from the 
	CNS-Trudinger--Guan \cite{CNS85,T95,G14} theory for the Dirichlet problem 
	on bounded domains.

For the special Lagrangian equation, the Dirichlet problem on bounded domains in 
the supercritical regime \(\Theta>(n-2)\pi/2\) was solved by Collins--Picard--Wu 
\cite{CPW17}. However, the critical case \(\Theta=(n-2)\pi/2\) remains an open problem. 

In this section, we solve the critical bounded-domain Dirichlet problem and, 
by applying the annular approximation scheme of Bao--Jiang \cite{BJ26} and 
Li--Xiao \cite{LX26}, extend the result to the exterior domain.

\subsection{Critical bounded problem}

The Dirichlet problem for the critical special Lagrangian equation
\begin{equation}\label{eq1}
	\left\{
	\begin{aligned}
		\sum_{i=1}^n \arctan \lambda_i(D^2 u) &= \frac{(n-2)\pi}{2} \quad &&\text{in } \Omega,\\
		u &= \varphi \quad &&\text{on } \partial\Omega,
	\end{aligned}
	\right.
\end{equation}

	\begin{theorem}\label{thm:main}
	Let $\Omega \subset \mathbb{R}^n$ be a bounded domain of class $C^\infty$, let $\varphi \in C^\infty(\partial\Omega)$.
	Assume that there exists a subsolution $\underline{u} \in C^\infty(\overline{\Omega})$ satisfying
	\begin{equation}\label{sub}
		\left\{
		\begin{aligned}
			\sum_{i=1}^n \arctan \lambda_i(D^2\underline{u}) &\geq \frac{(n-2)\pi}{2} \quad &&\text{in } \Omega,\\
			\underline{u} &= \varphi \quad &&\text{on } \partial\Omega.
		\end{aligned}
		\right.
	\end{equation}
	Then the Dirichlet problem \eqref{eq1} admits a unique solution $u \in C^\infty(\overline{\Omega})$. 
\end{theorem}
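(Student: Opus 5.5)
Uniqueness is immediate from the comparison principle (Lemma \ref{lem:comparison} with $\Omega$ bounded). For existence I will use an approximation from the supercritical side, following the strategy of Fu--Yau--Zhang for critical equations. Set $\Theta_0 := (n-2)\pi/2$ and $\Theta_t := \Theta_0 + t$ for small $t>0$. The subsolution $\underline u$ only satisfies $F(D^2\underline u)\ge\Theta_0$, so it is not a subsolution at phase $\Theta_t$. I will first perturb it: with $K$ large, set $\underline u_t := \underline u + t K (|x-x_0|^2 - R_0^2)$, where $\Omega\subset B_{R_0}(x_0)$. Since $F$ is monotone with derivative bounded below on bounded sets, this function satisfies $F(D^2 \underline u_t) \ge \Theta_0 + ct$ on the compact set $\overline\Omega$, and hence $\ge \Theta_t$ for a suitable $K$. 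Its boundary values are $\varphi_t = \varphi + tK(|x-x_0|^2-R_0^2)$, which are smooth and converge to $\varphi$. The theorem of Collins--Picard--Wu (supercritical, with a subsolution) then gives smooth solutions $u_t$ of $F(D^2u_t)=\Theta_t$ with $u_t=\varphi_t$ on $\partial\Omega$. Alternatively, I could keep $\varphi$ fixed and take a subsolution $\underline u + tK(|x-x_0|^2-R_0^2)$ plus a harmonic correction, but shifting the data is simpler.

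The goal is then estimates $\|u_t\|_{C^{2,\alpha}(\overline\Omega)}\le C$ that are uniform in $t\to0$. For the $C^0$ and $C^1$ bounds: $\underline u_t\le u_t$ by comparison, and $u_t \le h_t$, where $h_t$ solves $\Delta h = 0$ (or $L h=0$ for a suitable concave comparison; since $F(D^2 u_t)<n\pi/2$ forces $\lambda_{\min}$ not too large, I will instead use a supersolution built from the harmonic extension of $\varphi_t$ minus a large multiple of $|x|^2$). This gives uniform $C^0$ bounds and boundary gradient bounds. Interior gradient bounds follow from the maximum principle applied to $|Du|^2$, since the equation has constant coefficients in $x$.

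The crux is the $C^2$ estimate uniform in $t$. The constants in Collins--Picard--Wu degenerate because the level set $\{F\ge\Theta\}$ loses uniform convexity, and boundedness of $\lambda_{\min}$ from below, as $\Theta\downarrow\Theta_0$. However, for $\Theta\ge\Theta_0$ the sets $\{F\ge\Theta\}$ remain convex (Lemma \ref{lem:convex}), and the operator $-\tan$-type or $\tan(F/(n-2)...)$ reformulations are concave there. I will follow the Fu--Yau--Zhang idea: rewrite the equation in a form that is concave on $\{F\ge \Theta_0\}$, for instance $\mathrm{Im}\prod(1+i\lambda_j)$ combined with $\mathrm{Re}$, giving $\cos\Theta\,\mathrm{Im}-\sin\Theta\,\mathrm{Re}=0$, and use the uniform strict subsolution property that $\underline u_t$ retains, with a margin $ct$ that degenerates. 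Since that margin degenerates, I will instead exploit that $\underline u$ itself is a strict $C$-subsolution in Guan's sense at $\Theta_0$: the set $\{\lambda: F(\lambda)\ge\Theta_0\}\cap(\lambda(D^2\underline u)+\Gamma_n)$ is bounded. This holds because for $\Theta_0$ the level set is asymptotic only to cones where at most one eigenvalue tends to $-\infty$, while the others tend to $+\infty$, and these directions do not lie in $\lambda(\underline u)+\Gamma_n$ unless that set is unbounded. Verifying that this boundedness holds uniformly for $\Theta\in[\Theta_0,\Theta_0+t_0]$, together with a lower bound on $\lambda_{\min}$ from the convexity of the level set (Yuan: $\lambda_{\min}\ge -C$ on critical level sets), is the step I expect to be the main obstacle.

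Given these uniform bounds, Guan's framework yields global $C^2$ estimates, including the boundary double-normal estimate. The convexity of the level set allows Evans--Krylov to give $C^{2,\alpha}$ bounds uniform in $t$. Passing $t\to0$ by Arzelà--Ascoli then produces a $C^{2,\alpha}$ solution of \eqref{eq1}, and Schauder bootstrapping makes it $C^\infty(\overline\Omega)$.
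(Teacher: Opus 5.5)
Your outline has the paper's architecture: supercritical approximation (your quadratic perturbation of $\underline u$ and $\varphi$ is equivalent to the shift $D^2u^t+tI$ of Lemma \ref{lem:subsolution_preserved}), then uniform $C^0$, $C^1$, $C^2$ bounds, Evans--Krylov, and a limit. The genuine gap is the step that carries the content of the theorem, the $C^2$ bound uniform in $t$. You do not prove it: you delegate it to Guan's framework under a hypothesis you yourself leave unverified, and what you say about that hypothesis is wrong in several places.

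\begin{itemize}
\item Since $F$ is strictly increasing in each eigenvalue, $\{F\ge\Theta_0\}\cap(\underline\lambda+\Gamma_n)=\underline\lambda+\Gamma_n$ whenever $F(\underline\lambda)\ge\Theta_0$, so that set is never bounded. The relevant condition concerns the \emph{level set}: $\{F=\Theta\}\cap(\underline\lambda-\delta\mathbf 1+\Gamma_n)$ should lie in a fixed ball for all $x\in\overline\Omega$ and all $\Theta$ near $\Theta_0$.
\item The ends of the critical level set you describe (one eigenvalue $\to-\infty$, the others $\to+\infty$) are harmless, since all components are bounded below on $\underline\lambda-\delta\mathbf 1+\Gamma_n$. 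The ends that matter are those along which some $\lambda_j\to+\infty$ with all components bounded below. There necessarily $\sum_{i\ne j}\arctan\lambda_i\to(n-3)\pi/2$, and you do not treat these ends.
\item There is no bound $\lambda_{\min}\ge -C$ on the critical level set: $\lambda_1=\dots=\lambda_{n-1}=s\to\infty$ forces $\lambda_n\to-\infty$.
\item The polynomial form $\cos\Theta\,\mathrm{Im}-\sin\Theta\,\mathrm{Re}$ is not concave. For $n=4$, $\Theta=\pi$ it is $\sigma_3-\sigma_1$, which is strictly convex along $(1,1,1,1)$ at the point $(1,1,1,1)$ of the critical level set.
\item Guan's estimates are for concave operators, and the Collins--Picard--Wu reformulation $-e^{-kF}$ needs $k=k(t)\to\infty$. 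Already for $n=3$, concavity at a point of $\{\sigma_2=1\}$ with $\lambda_3<0$ forces $k\ge 2|\sigma_3|$, which is unbounded there. So even a correct $\mathcal C$-subsolution would not let you quote Guan's constants uniformly in $t$.
\end{itemize}

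What is missing, and what the paper supplies, is a way to use the subsolution quantitatively without uniform concavity.

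\begin{itemize}
\item Lemma \ref{lem:subsolution} extracts from compactness and $\underline\lambda_1<\infty$ the margin $\min_j\sum_{i\ne j}\arctan\underline\lambda_i>A_0>(n-3)\pi/2$. This margin excludes exactly the ends with some $\lambda_j\to+\infty$. It survives replacing $\underline\lambda$ by $\underline\lambda-\delta\mathbf 1$ for small $\delta$, and it is uniform for $\Theta\in[\Theta_0,\Theta_0+t_0]$. So the uniformity you list as the main obstacle is elementary.
\item The margin enters through the Fu--Yau--Zhang dichotomy (Lemmas \ref{lem:dichotomy} and \ref{lem:linearized_dichotomy}). At each point, either $\mathcal F^{ij}(\underline u_{ij}-u^t_{ij})\ge\delta_0\sum_i\mathcal F^{ii}$ or $\mathcal F_{\min}\ge\delta_0\sum_i\mathcal F^{ii}$, with $\delta_0$ independent of $t$. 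This is the $t$-independent substitute for Guan's key concavity inequality.
\item The dichotomy yields the barrier of Lemma \ref{lem:boundary_barrier}, hence the mixed estimate. Combined with the reduced-operator argument of Lemmas \ref{lem:unn_relation}--\ref{lem:unn_tau}, it also gives the double normal estimate.
\item Convexity of the level sets is used only in forms whose conclusion does not depend on a concavity constant. These are the supporting-hyperplane inequality $\mathcal F^{ij}(\underline u-u^t)_{ij}\ge0$, and $\mathcal F^{ij}\partial_{ij}(D_{kk}u^t)\ge0$, which reduces the interior bound to the boundary (Theorem \ref{prop:interior_hessian}).
\item The lower bound on $\lambda_{\min}$ you wanted from the level set comes instead from $\Delta u^t+nt\ge0$ (Lemma \ref{lem:yuan}(iv)) once upper bounds are known.
\end{itemize}

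A smaller point in your $C^0$/$C^1$ step: $\Delta u_t\ge0$ (Lemma \ref{lem:yuan}(iv)) is what justifies $u_t\le h_t$ and gives the upper boundary gradient barrier. Your alternative, a harmonic extension minus a multiple of $|x|^2$, does not agree with $\varphi_t$ on $\partial\Omega$, so it cannot give the boundary gradient bound.
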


\begin{theorem}\label{thm:uniform}
	Assume the same conditions as in Theorem \ref{thm:main}. 
	For each sufficiently small $t>0$, let $u^t \in  C^{\infty}(\overline{\Omega})$ 
	be the unique solution of the approximating Dirichlet problem
	\begin{equation}\label{approx}
		\left\{
		\begin{aligned}
			\sum_{i=1}^n \arctan \lambda_i(D^2 u^t + tI) &= (n-2)\frac{\pi}{2} + C_0 t \quad &&\text{in } \Omega,\\
			u^t &= \varphi \quad &&\text{on } \partial\Omega.
		\end{aligned}
		\right.
	\end{equation}
	Then there exist constants $C_0>0$ and $t_0>0$, depending only on 
	$\Omega$, $\varphi$ and $\underline u$, but independent of $t$, 
	such that for all $0<t<t_0$,
	\begin{equation}\label{uniform_estimate}
		\|u^t\|_{C^{2}(\overline{\Omega})} \le C.
	\end{equation}
\end{theorem}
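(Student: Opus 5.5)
The plan is to reduce \eqref{approx} to a supercritical problem, where the theory of Collins--Picard--Wu \cite{CPW17} and Guan \cite{G14} applies, while keeping track of which constants are uniform in $t$. Set $v^t:=u^t+\frac t2|x|^2$. Then $D^2v^t=D^2u^t+tI$, and \eqref{approx} becomes
$$
F(D^2v^t)=\Theta_t:=\frac{(n-2)\pi}{2}+C_0t \quad\text{in }\Omega,\qquad v^t=\varphi+\frac t2|x|^2 \quad\text{on }\partial\Omega .
$$
Since $\Theta_t>(n-2)\pi/2$, existence and uniqueness of a smooth $u^t$ follow from \cite{CPW17}, provided we have a subsolution.

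\textbf{Step 1: subsolution and choice of $C_0$.} We take $\underline v^t:=\underline u+\frac t2|x|^2$. Since $\underline u$ is smooth on $\overline\Omega$, the eigenvalues of $D^2\underline u$ are bounded by a constant $\Lambda_0$. By the mean value theorem,
$$
F(D^2\underline u+tI)\ge F(D^2\underline u)+t\sum_i\frac{1}{1+(|\lambda_i|+1)^2}\ge \frac{(n-2)\pi}{2}+\frac{n t}{1+(\Lambda_0+1)^2}
$$
for $t<1$. We therefore fix $C_0:=\frac{n}{2(1+(\Lambda_0+1)^2)}$. Then $\underline v^t$ is a strict subsolution of the $t$-problem with margin $C_0t$, and the constant $C_0$ depends only on $\underline u$.

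\textbf{Step 2: $C^0$ and $C^1$ bounds.} For phases at least $(n-2)\pi/2$ we have $\lambda_i+\lambda_j\ge0$ for all $i\neq j$, so $\Delta v^t\ge0$. Let $h$ be the harmonic extension of $\varphi+\frac t2|x|^2$. Comparison (Lemma \ref{lem:comparison}) gives $\underline v^t\le v^t\le h$, hence $|u^t|\le C$. Moreover $\underline v^t$ and $h$ agree with $v^t$ on $\partial\Omega$, so they yield a uniform bound on the normal derivative at the boundary. The interior gradient bound then follows from the maximum principle applied to $|Dv^t|^2$ with the linearized operator, since $F$ is translation invariant. None of these constants depend on $t$.

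\textbf{Step 3: $C^2$ bounds, the main obstacle.} Because $\lambda_i+\lambda_j\ge0$, it suffices to bound $\lambda_{\max}(D^2v^t)$ from above. The difficulty is that the constants in \cite{CPW17} degenerate as $\Theta_t\downarrow(n-2)\pi/2$. I would instead use Guan's $\mathcal C$-subsolution framework \cite{G14}, which needs only the convexity of the level sets of $F$ (Lemma \ref{lem:convex}, valid up to the critical phase) together with the following $\mathcal C$-subsolution property:
$$
\bigl(\lambda(D^2\underline v^t)+\Gamma_n\bigr)\cap\{F=\Theta_t\}\ \text{is bounded uniformly in } t .
$$
I expect this to hold because $\underline u$ is already a subsolution at the critical level and the level sets $\{F=\Theta_t\}$ vary continuously in $t$. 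The key check is that for fixed $\lambda(D^2\underline u)$, moving along any single coordinate direction $\lambda_i\to+\infty$ raises $F$ by the definite amount $\pi/2-\arctan\lambda_i(D^2\underline u)>0$; this should rule out unbounded intersections uniformly for small $t$. With this in hand:
\begin{enumerate}
\item[(a)] The interior estimate $\sup_\Omega|D^2v^t|\le C(1+\sup_{\partial\Omega}|D^2v^t|)$ follows from Guan's test function $\log\lambda_{\max}+\phi(|Dv|^2)+\psi(\underline v-v)$. Concavity is used only through the convexity of $\{F\ge\Theta_t\}$, for instance via the concave reformulation $-e^{-AF}$.
\item[(b)] The tangential--tangential and tangential--normal boundary estimates follow from the standard barrier $\underline v^t-v^t+\ldots$, built from the subsolution.
\item[(c)] The double normal estimate on $\partial\Omega$ is where I expect the real work. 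Following Trudinger's approach, I would show that the minimum over $\partial\Omega$ of the tangential phase $\tilde F(D^2_{T}v)$ stays above the asymptotic level $\Theta_t-\pi/2$ by a uniform margin, using the subsolution margin at the critical level rather than the $t$-margin.
\end{enumerate}
Combining (a)--(c) with Steps 1--2 gives \eqref{uniform_estimate}.
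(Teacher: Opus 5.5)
Your Steps 1 and 2 reproduce Lemma \ref{lem:subsolution_preserved}, Theorem \ref{thm:C0} and Theorem \ref{thm:gradient}. Your check that $(\lambda(D^2\underline v^t)+\Gamma_n)\cap\{F=\Theta_t\}$ is bounded uniformly in $t$ is the content of Lemma \ref{lem:subsolution}, namely the margin $\sum_{i\ge2}\arctan\underline\lambda_i>A_0>(n-3)\pi/2$.

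\textbf{The gap is in Step 3, which is the whole substance of the theorem.} You use Guan's $\mathcal C$-subsolution theory as a black box and assert that its constants depend only on level-set convexity and this uniform bound. That is not what the theory delivers. It is built on a concave operator, and its basic quantitative inequality is non-homogeneous: $\sum_i f_i(\lambda)(\underline\lambda_i-\lambda_i)\ge f(\underline\lambda)-f(\lambda)+\varepsilon\bigl(1+\sum_i f_i(\lambda)\bigr)$ whenever the unit normals at $\lambda$ and $\underline\lambda$ are $\beta$-apart. At the critical phase no such $\varepsilon$ can be uniform in $t$. On $\{F=\Theta_t\}$ take $\lambda=(s,\dots,s,\lambda_n)$ with $s\to\infty$. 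Then $\lambda_n\to-\cot(C_0t)$, the normal at $\lambda$ tends to $e_n$, $\sum_i f_i\to\sin^2(C_0t)$, and $\sum_i f_i(\underline\lambda_i-\lambda_i)$ tends to a quantity of order $t$, which forces $\varepsilon=O(t)$. The concave reformulation has the same defect: $-e^{-AF}$ is concave in the $e_n$ direction only if $A\ge2|\lambda_n|$, so one needs $A\gtrsim1/t$. Thus convex level sets plus a uniform $\mathcal C$-subsolution do not by themselves give $t$-independent $C^2$ bounds through that framework. The degeneration $\inf_{\{F=\Theta_t\}}\sum_i f_i\to0$ is precisely the critical difficulty, and your plan does not address it.

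\textbf{What is missing.} You need the \emph{homogeneous} dichotomy the paper uses (Lemmas \ref{lem:dichotomy} and \ref{lem:linearized_dichotomy}): either $\sum_i f_i(\underline\lambda_i-\lambda_i)\ge\delta_0\sum_i f_i$, or $f_j\ge\delta_0\sum_i f_i$ for every $j$, with $\delta_0$ independent of $t$. This does follow from your uniform margin and level-set convexity: test the supporting hyperplane at $\lambda$ against $\underline\lambda-\delta\mathbf 1+Me_i$, which lies in $\{F>\Theta_t\}$ for $M$ large and $\delta$ small. But then every barrier computation must be arranged so that all error terms carry a factor $\sum_i\mathcal F^{ii}$, as in Lemma \ref{lem:boundary_barrier}, because no positive lower bound on $\sum_i\mathcal F^{ii}$ is available.

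For (a), Guan's test function is unnecessary. The right-hand side is constant, so $\mathcal F^{ij}u^t_{kij}=0$, i.e. $(u^t_{kij})$ is tangent to the convex level set. Hence $\mathcal F^{ij}u^t_{kkij}=-\mathcal F^{ij,pq}u^t_{kij}u^t_{kpq}\ge0$. The maximum principle for $u^t_{kk}$, together with $\lambda_i+\lambda_j\ge0$, then gives Theorem \ref{prop:interior_hessian} with no $t$-dependence.

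Finally, (c) states the goal of Trudinger's argument but not its mechanism. In the paper the mechanism has three parts:
\begin{enumerate}
\item[(i)] Lemma \ref{lem:unn_relation}, from Schur--Horn plus convexity;
\item[(ii)] Lemma \ref{lem:unn_tau}, applied at the minimum point $x_0$ of the tangential phase;
\item[(iii)] comparison of $\Phi$ with $Av+B|x|^2$, where $v$ is the barrier of Lemma \ref{lem:boundary_barrier}, which bounds $u^t_{nn}(x_0)$; this bound is then propagated to all of $\partial\Omega$.
\end{enumerate}
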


\begin{lemma}\label{lem:subsolution}
	Let $\underline\lambda_1 \ge \underline\lambda_2 \ge \cdots \ge \underline\lambda_n$ be the eigenvalues of $D^2\underline u$, where $\underline u \in C^\infty(\overline{\Omega})$ satisfies the subsolution condition
	\begin{equation}
		\sum_{i=1}^n \arctan \underline\lambda_i(x) \ge (n-2)\frac{\pi}{2} \quad \text{for all } x \in \overline{\Omega}.
	\end{equation}
	Then there exists a constant $A_0 > (n-3)\frac{\pi}{2}$ such that
	\begin{equation}
		\sum_{i=2}^n \arctan \underline\lambda_i(x) > A_0 \quad \text{for all } x \in \overline{\Omega}.
	\end{equation}
	Equivalently,
	\begin{equation}
		\min_{1\le j\le n} \sum_{i\ne j} \arctan \underline\lambda_i(x) > A_0 \quad \text{for all } x \in \overline{\Omega}.
	\end{equation}
\end{lemma}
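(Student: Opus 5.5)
Fix $x\in\overline\Omega$ and write $\theta_i:=\arctan\underline\lambda_i(x)\in(-\pi/2,\pi/2)$. Since $\underline\lambda_1$ is the largest eigenvalue, $\theta_1<\pi/2$. Subtracting $\theta_1$ from the subsolution inequality gives, pointwise,
\begin{equation*}
\sum_{i=2}^n \theta_i \;\ge\; (n-2)\frac{\pi}{2}-\theta_1 \;>\; (n-2)\frac{\pi}{2}-\frac{\pi}{2}=(n-3)\frac{\pi}{2}.
\end{equation*}
This already yields a strict inequality at each point, but the constant $A_0$ must be uniform in $x$. I will obtain this uniformity from compactness and smoothness.

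Since $\underline u\in C^\infty(\overline\Omega)$ and $\overline\Omega$ is compact, $\Lambda:=\max_{\overline\Omega}|D^2\underline u|<\infty$. Hence $\underline\lambda_1(x)\le\Lambda$ for all $x$, so $\theta_1(x)\le\arctan\Lambda<\pi/2$. Plugging this into the display gives
\begin{equation*}
\sum_{i=2}^n\theta_i(x)\;\ge\;(n-2)\frac{\pi}{2}-\arctan\Lambda \quad\text{for all } x\in\overline\Omega.
\end{equation*}
I then set
\begin{equation*}
A_0:=(n-2)\frac{\pi}{2}-\arctan\Lambda-\eta,
\end{equation*}
with $\eta>0$ small, e.g.\ $\eta=\tfrac12\bigl(\tfrac{\pi}{2}-\arctan\Lambda\bigr)$. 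With this choice $A_0>(n-3)\frac{\pi}{2}$, and the inequality in the statement is strict. An equivalent route is to note that the map $x\mapsto\sum_{i\ge2}\arctan\underline\lambda_i(x)$ is continuous, since $\sum_{i\ge2}\arctan\lambda_i=F-\arctan\lambda_{\max}$ and the ordered eigenvalues are continuous. It is pointwise $>(n-3)\pi/2$, so its minimum over the compact set $\overline\Omega$ exceeds $(n-3)\pi/2$; any $A_0$ strictly between these two values works.

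For the equivalent formulation, I use that $\arctan$ is increasing, so $\arctan\underline\lambda_1$ is the largest of the $\theta_j$. Therefore
\begin{equation*}
\min_{1\le j\le n}\sum_{i\ne j}\theta_i=\sum_i\theta_i-\max_j\theta_j=\sum_{i\ge2}\theta_i,
\end{equation*}
and the two statements coincide.

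I expect no real obstacle here. The only care needed is that the uniformity of $A_0$ comes from the upper bound on $\underline\lambda_1$, which rests on smoothness of $\underline u$ up to the boundary. Without it, $\theta_1$ could approach $\pi/2$ and $A_0$ would degenerate to $(n-3)\pi/2$.
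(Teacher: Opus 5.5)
Your proof is correct and is essentially the paper's argument: the pointwise strict inequality comes from $\arctan\underline\lambda_1<\pi/2$, and it is made uniform by compactness of $\overline\Omega$ and smoothness of $\underline u$. The paper takes the positive minimum $a_0$ of the continuous function $H=\sum_{i\ge2}\arctan\underline\lambda_i-(n-3)\frac{\pi}{2}$, which is exactly your alternative route. Your primary route through $\Lambda=\max_{\overline\Omega}|D^2\underline u|$ is a slightly more explicit variant: it gives $A_0=(n-3)\frac{\pi}{2}+\tfrac12\bigl(\tfrac{\pi}{2}-\arctan\Lambda\bigr)$ and uses only boundedness of the Hessian, not continuity of the ordered eigenvalues.
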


\begin{proof}
	Since $\underline u \in C^\infty(\overline{\Omega})$, the eigenvalues $\underline\lambda_i(x)$ are continuous and finite on the compact set $\overline{\Omega}$. In particular,
	\begin{equation}
		\arctan \underline\lambda_1(x) < \frac{\pi}{2} \quad \text{for all } x \in \overline{\Omega}.
	\end{equation}
	Combining this with the subsolution condition yields
	\begin{equation}
		\sum_{i=2}^n \arctan \underline\lambda_i(x)
		= \sum_{i=1}^n \arctan \underline\lambda_i(x) - \arctan \underline\lambda_1(x)
		> (n-2)\frac{\pi}{2} - \frac{\pi}{2}
		= (n-3)\frac{\pi}{2}
	\end{equation}
	for all $x \in \overline{\Omega}$.
	
	Define
	\begin{equation}
		H(x) := \sum_{i=2}^n \arctan \underline\lambda_i(x) - (n-3)\frac{\pi}{2}.
	\end{equation}
	Then $H$ is continuous on $\overline{\Omega}$ and $H(x) > 0$ for all $x \in \overline{\Omega}$. Since $\overline{\Omega}$ is compact, there exists $a_0 > 0$ such that
	\begin{equation}
		H(x) \ge a_0 \quad \text{for all } x \in \overline{\Omega}.
	\end{equation}
	Taking $A_0 := (n-3)\frac{\pi}{2} + \frac{a_0}{2}$ .
	
	The equivalence follows from the monotonicity of $\arctan$: since $\underline\lambda_1$ is the largest eigenvalue, the minimum over $j$ of $\sum_{i\ne j} \arctan \underline\lambda_i$ is attained at $j=1$, i.e.
	\begin{equation}
		\min_{1\le j\le n} \sum_{i\ne j} \arctan \underline\lambda_i(x)
		= \sum_{i=2}^n \arctan \underline\lambda_i(x).
	\end{equation}
\end{proof}

\begin{lemma}\label{lem:subsolution_preserved}
	Assume that $\underline u \in C^\infty(\overline{\Omega})$ satisfies the subsolution condition
	\begin{equation}
		\sum_{i=1}^n \arctan \underline\lambda_i(x) \ge (n-2)\frac{\pi}{2} \quad \text{for all } x \in \overline{\Omega},
	\end{equation}
	where $\underline\lambda_1 \ge \cdots \ge \underline\lambda_n$ are the eigenvalues of $D^2\underline u$. Define
	\begin{equation}
		C_0 := \inf_{\substack{x\in \overline{\Omega}\\ s_i\in[0,1]}} \sum_{i=1}^n \frac{1}{1+(\underline\lambda_i(x)+s_i)^2} > 0.
	\end{equation}
	For $0<t<1$, set
	\begin{equation}
		\Theta_t := (n-2)\frac{\pi}{2} + C_0 t,
	\end{equation}
	and consider the approximating problem
	\begin{equation}
		\sum_{i=1}^n \arctan \lambda_i(D^2u^t + tI) = \Theta_t.
	\end{equation}
	Then, for all $0<t<1$,
	\begin{equation}
		\sum_{i=1}^n \arctan \lambda_i(D^2\underline u + tI) \ge \Theta_t \quad \text{in } \Omega.
	\end{equation}
	In particular, $\underline u$ is a subsolution of the approximating problem for every $0<t<1$.
\end{lemma}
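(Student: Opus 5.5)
The statement to prove is Lemma \ref{lem:subsolution_preserved}. It has two parts: $C_0>0$, and $\underline u$ is a subsolution of the shifted equation for $0<t<1$.

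\textbf{Step 1: $C_0>0$.} Since $\underline u\in C^\infty(\overline\Omega)$ and $\overline\Omega$ is compact, the eigenvalues $\underline\lambda_i(x)$ are continuous and hence uniformly bounded, say $|\underline\lambda_i|\le \Lambda$ on $\overline\Omega$. Then for $s_i\in[0,1]$,
$$\frac{1}{1+(\underline\lambda_i+s_i)^2}\ge \frac{1}{1+(\Lambda+1)^2}.$$
Summing over $i$ gives $C_0\ge n/(1+(\Lambda+1)^2)>0$.

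\textbf{Step 2: reduce to a one-variable estimate.} The matrices $D^2\underline u$ and $D^2\underline u+tI$ commute, so the eigenvalues of $D^2\underline u+tI$ are exactly $\underline\lambda_i+t$. Fix $x\in\Omega$ and set
$$g(t):=\sum_i\arctan(\underline\lambda_i(x)+t).$$
By the mean value theorem there is some $s\in(0,t)\subset[0,1]$ with
$$g(t)-g(0)=t\sum_i\frac{1}{1+(\underline\lambda_i+s)^2}.$$
Taking all $s_i=s$ in the infimum defining $C_0$, the right-hand side is at least $C_0t$.

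\textbf{Step 3: conclude.} Combining Step 2 with the hypothesis $g(0)\ge (n-2)\pi/2$ gives
$$\sum_i\arctan\lambda_i(D^2\underline u+tI)=g(t)\ge (n-2)\tfrac{\pi}{2}+C_0t=\Theta_t.$$
This is exactly the subsolution property for the approximating problem.

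The only point needing care is that the infimum in the definition of $C_0$ ranges over independent $s_i$, whereas the mean value theorem produces a common $s$. The common choice is one admissible instance of the infimum, so the bound still holds, and no further obstacle arises.
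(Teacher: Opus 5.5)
Your proposal is correct and follows essentially the paper's argument: a mean value theorem comparison of $\sum_i \arctan(\underline\lambda_i+t)$ with $\sum_i \arctan\underline\lambda_i$, followed by the lower bound $C_0$. The only differences are minor. The paper applies the mean value theorem termwise, obtaining independent intermediate points $\xi_i=\underline\lambda_i+s_i$, which is why $C_0$ is defined with independent $s_i$; you apply it to the sum with a common $s$, which works equally well. You also verify $C_0>0$ explicitly, whereas the paper only asserts it from the boundedness of the eigenvalues.
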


\begin{proof}
	Since $D^2\underline u + tI$ has eigenvalues $\underline\lambda_i + t$, by the mean value theorem,
	\begin{equation}
		\sum_{i=1}^n \arctan(\underline\lambda_i + t) - \sum_{i=1}^n \arctan \underline\lambda_i
		= t \sum_{i=1}^n \frac{1}{1+\xi_i^2}
	\end{equation}
	for some $\xi_i \in (\underline\lambda_i, \underline\lambda_i+t) \subset (\underline\lambda_i, \underline\lambda_i+1)$, since $0<t<1$. 
	
	Since $\underline u \in C^\infty(\overline{\Omega})$, the eigenvalues $\underline\lambda_i$ are bounded on $\overline{\Omega}$. Hence
	$$
	\sum_{i=1}^n \frac{1}{1+\xi_i^2} \ge C_0 > 0,
	$$
	by the definition of $C_0$. Therefore,
	\begin{equation}
		\sum_{i=1}^n \arctan(\underline\lambda_i + t)
		\ge \sum_{i=1}^n \arctan \underline\lambda_i + C_0 t
		\ge (n-2)\frac{\pi}{2} + C_0 t
		= \Theta_t.
	\end{equation}
	This proves the lemma.
\end{proof}

\begin{lemma}[\cite{CPW17} Lemma 2.1]\label{lem:yuan}
	Assume that
	\begin{equation}
		\sum_{i=1}^n \arctan \lambda_i \ge \tau
	\end{equation}
	for some $\tau \in [(n-2)\frac{\pi}{2}, n\frac{\pi}{2})$, and that the components of 
	$\lambda = (\lambda_1,\ldots,\lambda_n)$ are ordered so that 
	$\lambda_1 \ge \lambda_2 \ge \cdots \ge \lambda_n$. Then the following hold:
	\begin{enumerate}
		\item[(i)] $\lambda_1 \ge \lambda_2 \ge \cdots \ge \lambda_{n-1} > 0$ and $\lambda_{n-1} \ge |\lambda_n|$;
		\item[(ii)] $\lambda_1 + (n-1)\lambda_n \ge 0$;
		\item[(iii)] the level set 
		$$
		\Gamma^\tau := \left\{\lambda \in \mathbb{R}^n : \sum_{i=1}^n \arctan \lambda_i > \tau \right\}
		$$
		is convex;
		\item[(iv)] $\displaystyle \sum_{i=1}^n \lambda_i \ge 0$.
	\end{enumerate}
\end{lemma}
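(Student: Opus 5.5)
The plan is to derive all four items from one elementary fact: $\arctan\lambda_i<\pi/2$ for each $i$. This forces the bulk of the phase to come from the large eigenvalues. Item (iii) is exactly Lemma~\ref{lem:convex} of Yuan. The strict superlevel set $\Gamma^\tau$ is the interior of the convex set $L_\tau$, and the interior of a convex set is convex, so (iii) needs no new argument. For the rest, write $\theta_i:=\pi/2-\arctan\lambda_i\in(0,\pi)$, so that $\theta_i=\arctan(1/\lambda_i)$ whenever $\lambda_i>0$.

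\emph{Step 1: item (i).} Suppose $\lambda_{n-1}\le 0$. Then $\arctan\lambda_{n-1}+\arctan\lambda_n\le 0$. The other $n-2$ terms are each $<\pi/2$, so the total phase would be $<(n-2)\pi/2\le\tau$, a contradiction; hence $\lambda_{n-1}>0$. Similarly, since $\sum_{i\le n-2}\arctan\lambda_i<(n-2)\pi/2$, we get
\[
\arctan\lambda_{n-1}+\arctan\lambda_n>\tau-(n-2)\tfrac{\pi}{2}\ge 0 .
\]
Because $\arctan$ is odd and increasing, this gives $\lambda_{n-1}>-\lambda_n$, and hence $\lambda_{n-1}\ge|\lambda_n|$.

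\emph{Step 2: item (ii), the main step.} If $\lambda_n\ge 0$ there is nothing to prove, so assume $\lambda_n<0$. By monotonicity we may assume $\tau=(n-2)\pi/2$. Rewrite the constraint as
\[
\arctan|\lambda_n|\le \sum_{i=1}^{n-1}\theta_i ,
\]
where $\sum_{i<n}\theta_i<\pi/2$ by the computation in Step 1. Fix $\lambda_1$. Raising $\lambda_2,\dots,\lambda_{n-1}$ up to $\lambda_1$ only enlarges the admissible range of $|\lambda_n|$, since each $\theta_i\le\theta_1$. So the worst case is $\lambda_1=\cdots=\lambda_{n-1}=\cot\theta$, which gives
\[
|\lambda_n|\le\tan\bigl((n-1)\theta\bigr)^{-1}\cdot{}\!\!\!\quad \text{i.e.}\quad |\lambda_n|\le \cot\bigl((n-1)\theta\bigr),\qquad (n-1)\theta<\pi/2 .
\]
The claim $\lambda_1\ge(n-1)|\lambda_n|$ then reduces to
\[
\tan\bigl((n-1)\theta\bigr)\ge (n-1)\tan\theta .
\]
This follows from superadditivity of $\tan$ on $[0,\pi/2)$, namely $\tan(a+b)\ge\tan a+\tan b$ for $a,b\ge0$ with $a+b<\pi/2$, which is itself a consequence of $\tan(0)=0$ and convexity of $\tan$. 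I expect the main obstacle to be making the reduction to the equal-eigenvalue case rigorous. I would handle it directly: bound $\sum_{i<n}\theta_i\le(n-1)\theta_1$ and then use that $\cot$ is decreasing, so that
\[
|\lambda_n|\le\cot\bigl(\textstyle\sum_{i<n}\theta_i\bigr)^{-1}\cdots
\]
is replaced cleanly by $|\lambda_n|\le\cot\bigl((n-1)\theta_1\bigr)\le \cot\theta_1/(n-1)=\lambda_1/(n-1)$. Here one must check that $(n-1)\theta_1<\pi/2$ whenever $\cot\bigl((n-1)\theta_1\bigr)$ is relevant; if $(n-1)\theta_1\ge\pi/2$, the bound has to be rederived from the actual constraint rather than the crude one.

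\emph{Step 3: item (iv).} If $\lambda_n\ge0$, all eigenvalues are nonnegative by (i) and we are done. Otherwise, using (i),
\[
\sum_i\lambda_i\ \ge\ \lambda_{n-1}+\lambda_n\ \ge\ 0 .
\]
Alternatively, by (ii), $\sum_i\lambda_i\ge\lambda_1+\lambda_n\ge\lambda_1+(n-1)\lambda_n\ge 0$.
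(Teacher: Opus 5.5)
The paper gives no proof of this lemma and simply cites \cite{CPW17}. Your arguments for (i), (iii) and (iv) are fine. The strict inequalities in (i) use $n-2\ge 1$, i.e.\ the paper's standing assumption $n\ge 3$; for $n=2$, $\tau=0$, $\lambda=(0,0)$ is a counterexample. In (iii) you can also write $\Gamma^\tau=\bigcup_{\tau'>\tau}L_{\tau'}$, a nested union of convex sets. Your idea for (ii) is the right one: pass to $\theta_i=\arctan(1/\lambda_i)$ and use superadditivity of $\tan$, equivalently subadditivity of $\arctan$ on $[0,\infty)$.

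However, Step~2 as written has its inequalities reversed, and the chain you actually state does not prove (ii). With $\lambda_n<0$ and $\tau=(n-2)\pi/2$, the constraint is
\[
\sum_{i=1}^{n-1}\theta_i+\arctan|\lambda_n|\le\frac{\pi}{2},\qquad\text{i.e.}\qquad |\lambda_n|\le\cot\Bigl(\sum_{i<n}\theta_i\Bigr),
\]
not $\arctan|\lambda_n|\le\sum_{i<n}\theta_i$. Since $\lambda_1$ is the largest eigenvalue, $\theta_1$ is the \emph{smallest} $\theta_i$. So $\theta_i\ge\theta_1$ and $\sum_{i<n}\theta_i\ge(n-1)\theta_1$, not $\le$. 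With your inequality $\sum_{i<n}\theta_i\le(n-1)\theta_1$, monotonicity of $\cot$ gives $\cot\bigl(\sum_{i<n}\theta_i\bigr)\ge\cot\bigl((n-1)\theta_1\bigr)$, which yields no upper bound on $|\lambda_n|$. This reversed inequality is also the only reason you had to leave the case $(n-1)\theta_1\ge\pi/2$ open.

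With the directions corrected, the argument closes and that case is vacuous. The constraint itself gives
\[
(n-1)\theta_1\le\sum_{i<n}\theta_i\le\frac{\pi}{2}-\arctan|\lambda_n|<\frac{\pi}{2}.
\]
This is also the correct source of $\sum_{i<n}\theta_i<\pi/2$; it does not come from Step~1. Hence all arguments of $\cot$ lie in $(0,\pi/2)$, and
\[
|\lambda_n|\le\cot\Bigl(\sum_{i<n}\theta_i\Bigr)\le\cot\bigl((n-1)\theta_1\bigr)\le\frac{\cot\theta_1}{n-1}=\frac{\lambda_1}{n-1}.
\]
The last inequality is exactly $\tan\bigl((n-1)\theta_1\bigr)\ge(n-1)\tan\theta_1$. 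Also drop the stray exponents in your displays: $\cot(\cdot)^{-1}$ would be $\tan(\cdot)$, which is wrong.
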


\subsection{$C^{0}$ estimate and Gradient estimate}

\begin{theorem}\label{thm:C0}
	Assume the same conditions as in Theorem \ref{thm:main}. 
	For each $0<t<1$, let $u^t \in C^{\infty}(\overline{\Omega})$ 
	be the unique solution of the approximating Dirichlet problem \eqref{approx}.
	Then there exists a constant $C>0$, depending only on $\Omega$, $\varphi$, and $\underline u$, 
	but independent of $t$, such that for all $0<t<1$,
	\begin{equation}
		\|u^t\|_{C^0(\overline{\Omega})} \le C.
	\end{equation}
\end{theorem}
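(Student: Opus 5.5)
We bound $u^t$ from below by the subsolution $\underline u$ and from above by a harmonic function, and use comparison for the approximating equation on the bounded domain $\Omega$.

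\emph{Lower bound.} By Lemma \ref{lem:subsolution_preserved}, for every $0<t<1$ the function $\underline u$ satisfies
\[
\sum_{i=1}^n\arctan\lambda_i(D^2\underline u+tI)\ge\Theta_t \quad \text{in } \Omega,
\]
and $\underline u=\varphi=u^t$ on $\partial\Omega$. The operator $u\mapsto\sum_i\arctan\lambda_i(D^2u+tI)$ is elliptic, since it is monotone in $D^2u$. Both $u^t$ and $\underline u$ are smooth up to the boundary. Hence the classical comparison principle on the bounded domain $\Omega$ (or Lemma \ref{lem:comparison}, after noting that $v\mapsto v+\tfrac t2|x|^2$ converts the equation into the special Lagrangian equation) gives
\[
u^t\ge\underline u\ge-\|\underline u\|_{C^0(\overline\Omega)} \quad \text{in } \Omega.
\]
To be careful about the substitution: set $\tilde u^t=u^t+\tfrac t2|x|^2$ and $\tilde{\underline u}=\underline u+\tfrac t2|x|^2$. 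Then both functions are compared for the special Lagrangian equation with phase $\Theta_t$. Their boundary values agree, so Lemma \ref{lem:comparison} applies directly.

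\emph{Upper bound.} Write $\lambda_i=\lambda_i(D^2u^t+tI)$. The phase satisfies $\Theta_t\ge(n-2)\pi/2$, so Lemma \ref{lem:yuan}(iv) gives $\sum_i\lambda_i\ge0$, that is,
\[
\Delta u^t+nt\ge0 .
\]
Let $h$ be the harmonic function in $\Omega$ with $h=\varphi$ on $\partial\Omega$. Fix a point $x_0\in\Omega$ and put $d=\operatorname{diam}\Omega$. The function
\[
u^t+\tfrac{t}{2}|x-x_0|^2-h
\]
is subharmonic and vanishes at most $\tfrac t2 d^2$ on $\partial\Omega$. By the maximum principle,
\[
u^t\le h+\tfrac{t}{2}d^2\le\max_{\partial\Omega}|\varphi|+d^2 \quad \text{in } \Omega .
\]
Combining the two bounds yields $\|u^t\|_{C^0}\le C(\Omega,\varphi,\underline u)$, uniformly in $t\in(0,1)$.

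\emph{Main point to check.} The only delicate step is justifying comparison for the perturbed operator. This reduces, through the quadratic shift above, to the comparison principle already stated in Lemma \ref{lem:comparison}, so I expect no real obstacle. The upper bound uses only the trace inequality from Lemma \ref{lem:yuan}, which requires the phase to be at least critical; this holds because $C_0t>0$.
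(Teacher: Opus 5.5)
Your proposal is correct and is essentially the paper's proof. The lower bound comes from comparing $u^t$ with $\underline u$, a subsolution of \eqref{approx} by Lemma~\ref{lem:subsolution_preserved}, and the upper bound from $\Delta u^t+nt\ge 0$ (Lemma~\ref{lem:yuan}(iv)) plus the maximum principle against a harmonic function; you use the harmonic extension of $\varphi$ together with $\tfrac t2|x-x_0|^2$ where the paper uses the harmonic $w$ with boundary data $\varphi+\tfrac t2|x|^2$, which gives the same estimate.
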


\begin{proof}
	For the lower bound, by Lemma \ref{lem:subsolution_preserved}, $\underline u$ is a subsolution of \eqref{approx} for every $0<t<1$. Since the operator is elliptic and $\underline u = u^t = \varphi$ on $\partial\Omega$, the comparison principle yields
	\begin{equation}
		u^t \ge \underline u \quad \text{in } \Omega.
	\end{equation}
	
	For the upper bound, let $\mu_1 \ge \cdots \ge \mu_n$ be the eigenvalues of $D^2u^t$. 
	From \eqref{approx}, the eigenvalues of $D^2u^t + tI$ are $\mu_i + t$ and satisfy
	$$
	\sum_{i=1}^n \arctan(\mu_i + t) = (n-2)\frac{\pi}{2} + C_0 t \ge (n-2)\frac{\pi}{2}.
	$$
	By Lemma \ref{lem:yuan} (iv), we have
	$$
	\sum_{i=1}^n (\mu_i + t) \ge 0,
	$$
	i.e.
	$$
	\Delta u^t + nt \ge 0 \quad \text{in } \Omega.
	$$
	
	Let $w \in C^2(\overline{\Omega})$ be the harmonic function satisfying
	$$
	\Delta w = 0 \quad \text{in } \Omega, \qquad w = \varphi + \frac{t}{2}|x|^2 \quad \text{on } \partial\Omega.
	$$
	Define
	$$
	v := u^t - w + \frac{t}{2}|x|^2.
	$$
	Then
	$$
	\Delta v = \Delta u^t + nt \ge 0 \quad \text{in } \Omega,
	$$
	and
	$$
	v = 0 \quad \text{on } \partial\Omega.
	$$
	By the maximum principle,
	$$
	v = u^t - w + \frac{t}{2}|x|^2 \le 0 \quad \text{in } \Omega,
	$$
	hence
	$$
	u^t \le w - \frac{t}{2}|x|^2 \quad \text{in } \Omega.
	$$
	
	Since $\Omega$ is bounded, there exists $R>0$ such that $|x| \le R$ for all $x \in \overline{\Omega}$. By the maximum principle for the harmonic function $w$, we have
	$$
	w(x) \le \max_{\partial\Omega} \left(\varphi + \frac{t}{2}|x|^2\right) \le \max_{\partial\Omega} \varphi + \frac{R^2}{2}.
	$$
	Therefore,
	$$
	u^t(x) \le w(x) - \frac{t}{2}|x|^2 \le \max_{\partial\Omega} \varphi + \frac{R^2}{2}.
	$$
	Combining the lower and upper bounds, we obtain
	$$
	\|u^t\|_{C^0(\overline{\Omega})} \le C,
	$$
	where $C$ is independent of $t$.
\end{proof}

\begin{lemma}\label{lem:linearized}
	Let $u^t$ be a solution of the approximating problem \eqref{approx}. Then for $0<t<1$:
	
	(i) The linearized operator $L$ of \eqref{approx} takes the form
	$$
	L(v) = \sum_{i,j} \mathcal{F}^{ij} v_{ij},
	$$
	where
	$$
	\mathcal{F}^{ij} = \frac{\partial}{\partial M_{ij}}\left(\sum_{p=1}^n \arctan \lambda_p(M)\right), \qquad M = D^2u^t + tI.
	$$
	
	(ii) Differentiating the approximating equation \eqref{approx} with respect to $x_k$ gives
$$
	\sum_{i,j} \mathcal{F}^{ij} u^t_{kij} = 0.
$$
	
	(iii) Differentiating twice gives
	$$
	\sum_{i,j} \mathcal{F}^{ij} u^t_{kkij} + \sum_{i,j,p,q} \mathcal{F}^{ij,pq} u^t_{kij} u^t_{kpq} = 0,
	$$
	where
	$$
	\mathcal{F}^{ij,pq} = \frac{\partial^2}{\partial M_{ij}\partial M_{pq}}\left(\sum_{p=1}^n \arctan \lambda_p(M)\right).
	$$
	
	(iv) Let $Q(M) = -e^{-k\mathcal{F}(M)}$ with $k$ sufficiently large. 
	Then $Q$ is concave. Since $Q(D^2u^t + tI) =-e^{-k\Theta(t)} $ is constant, differentiating gives
	$$
	Q^{ij} u^t_{kij} = 0,
	$$
	and differentiating twice gives
	$$
	Q^{ij} u^t_{kkij} + Q^{ij,pq} u^t_{kij} u^t_{kpq} = 0,
$$
	where
	$$
	Q^{ij} = ke^{-k\mathcal{F}}F^{ij}, \qquad 
	Q^{ij,pq} = ke^{-k\mathcal{F}}\left(\mathcal{F}^{ij,pq} - k\mathcal{F}^{ij}F^{pq}\right).
	$$
	Since $Q$ is concave, we have $Q^{ij,pq}u^t_{kij}u^t_{kpq} \le 0$.
\end{lemma}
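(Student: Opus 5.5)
We verify the four items in order. Items (i)--(iii) are formal consequences of the chain rule, and the only substantive point is the concavity claim in (iv).

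For (i), write the approximating equation as $\mathcal F(D^2u^t+tI)=\Theta_t$ with $\mathcal F(M)=\sum_p\arctan\lambda_p(M)$. Since $tI$ is a constant matrix, the Fréchet derivative of $u\mapsto \mathcal F(D^2u+tI)$ in the direction $v$ is $\mathcal F^{ij}(D^2u^t+tI)\,v_{ij}$. This is exactly $L$, and $L$ is elliptic because $\mathcal F^{ij}$ is similar to $\mathrm{diag}\big((1+(\mu_i+t)^2)^{-1}\big)>0$. For (ii) and (iii), the right-hand side $\Theta_t$ is constant. Differentiating once in $x_k$ gives $\mathcal F^{ij}u^t_{kij}=0$, using $\partial_k(u^t_{ij}+t\delta_{ij})=u^t_{ijk}$. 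Differentiating again in $x_k$ and applying the chain rule to $\mathcal F^{ij}(D^2u^t+tI)$ produces the extra term $\mathcal F^{ij,pq}u^t_{kij}u^t_{kpq}$. Smoothness of $u^t$ justifies interchanging the derivatives, and $\mathcal F$ is smooth (analytic) as a function of the matrix entries.

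For (iv), the derivative formulas for $Q=-e^{-k\mathcal F}$ follow directly from the chain rule:
$$Q^{ij}=ke^{-k\mathcal F}\mathcal F^{ij},\qquad Q^{ij,pq}=ke^{-k\mathcal F}\big(\mathcal F^{ij,pq}-k\mathcal F^{ij}\mathcal F^{pq}\big).$$
Since $Q(D^2u^t+tI)=-e^{-k\Theta_t}$ is constant, the same differentiation as in (ii) and (iii) gives the two identities for $Q$. These are also simply (ii) and (iii) multiplied by $ke^{-k\mathcal F}$, with the added term $-k^2e^{-k\mathcal F}(\mathcal F^{ij}u^t_{kij})^2$, which vanishes by (ii).

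The real content is the final inequality $Q^{ij,pq}u^t_{kij}u^t_{kpq}\le 0$. In fact, the vanishing just noted shows that
$$Q^{ij,pq}u^t_{kij}u^t_{kpq}=ke^{-k\mathcal F}\mathcal F^{ij,pq}u^t_{kij}u^t_{kpq},$$
so what must be proved is the negativity of the quadratic form of $D^2\mathcal F$ on the tangent vector $X=(u^t_{kij})$. This vector satisfies $\mathcal F^{ij}X_{ij}=0$, so it lies in the tangent space of the level set.

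I would prove this by quasi-concavity rather than by full concavity of $Q$. By Lemma \ref{lem:yuan}(iii), together with the matrix analogue used in Lemma \ref{lem:convex}, the superlevel set $\{\mathcal F\ge \Theta_t\}$ is convex, since $\Theta_t\ge (n-2)\pi/2$. A smooth function whose superlevel sets are convex has second derivative nonpositive on the tangent space of its level sets wherever $D\mathcal F\ne0$, and here $D\mathcal F>0$. To see this, let $\gamma(s)$ be the curve in the level set with $\gamma(0)=M$ and $\gamma'(0)=X$. Convexity of the superlevel set gives the bound $D\mathcal F\cdot\gamma''(0)\ge0$, and differentiating $\mathcal F(\gamma(s))$ twice then yields $D^2\mathcal F(X,X)\le0$.

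Full concavity of $Q$ on all of $\mathrm{Sym}(n)$ with a uniform $k$ is the delicate point. Quasi-concavity alone does not give it, because at critical phase the eigenvalue $\lambda_n$ may be very negative. I would therefore avoid asserting it globally and state only what is needed, namely the tangential inequality above. If a genuine concavity statement with large $k$ is required, I would use the standard fact that quasi-concavity plus $D\mathcal F>0$ gives concavity of $-e^{-k\mathcal F}$ on compact sets. Here $D^2u^t$ ranges over a compact set once the $C^2$ bound is available, but that bound is what Theorem \ref{thm:uniform} is trying to prove. This circularity is the main obstacle, and it is why I would rely on the tangential argument for the a priori estimates.
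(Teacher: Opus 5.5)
Items (i)--(iii) and the two identities for $Q$ are the same chain-rule computations the paper intends. The paper writes no proof of this lemma: it simply asserts that $Q=-e^{-k\mathcal F}$ is concave for large $k$, in the spirit of \cite{CPW17}, and reads off $Q^{ij,pq}u^t_{kij}u^t_{kpq}\le0$.

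For that final inequality you take a genuinely different route that needs less. By (ii), the term $-k^2e^{-k\mathcal F}(\mathcal F^{ij}u^t_{kij})^2$ vanishes, so the quantity is a positive multiple of $\mathcal F^{ij,pq}u^t_{kij}u^t_{kpq}$. The tangential Hessian of $\mathcal F$ is then nonpositive because $\{\mathcal F\ge\Theta_t\}$ is convex, and your curve/supporting-hyperplane argument for this is correct. This uses only Yuan's convexity (Lemma \ref{lem:yuan}(iii) in matrix form), and it is visibly independent of $k$ and $t$. The same supporting-hyperplane argument also gives $\mathcal F^{ij}(\underline u-u^t)_{ij}\ge0$, which is the other place the paper invokes concavity of $Q$ (Case 2b of Lemma \ref{lem:boundary_barrier}).

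What you leave unproved is the clause ``$Q$ is concave'', and your reason for setting it aside is not right: there is no circularity. On all of $\mathrm{Sym}(n)$ the claim is simply false, since low superlevel sets of $\mathcal F$ are not convex; the correct domain is the convex set $\{\mathcal F\ge\Theta_t\}$. For each fixed $t\in(0,1)$ the phase $\Theta_t=(n-2)\pi/2+C_0t$ is strictly supercritical, so on this set $\lambda_n\ge-\cot(C_0t)$. In an eigenbasis, the condition $D^2\mathcal F(X,X)\le k(\mathcal F^{ij}X_{ij})^2$ splits into two parts:
\begin{itemize}
\item off-diagonal terms with coefficients $-(\lambda_i+\lambda_j)/((1+\lambda_i^2)(1+\lambda_j^2))\le0$, by Lemma \ref{lem:yuan}(i);
\item the diagonal inequality $-2\sum_i\lambda_i\eta_i^2\le k(\sum_i\eta_i)^2$, with $\eta_i=X_{ii}/(1+\lambda_i^2)$.
\end{itemize}
A short computation shows the diagonal inequality holds once $k\ge 2/\sin(C_0t)$, with no reference to $D^2u^t$ at all. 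You are right that $k$ must blow up as $t\to0$; at the critical level itself, e.g.\ $n=3$ with $\sigma_2=1$, no fixed $k$ works. This is harmless, however, because every inequality extracted from concavity carries the positive factor $ke^{-k\mathcal F}$, which drops out.

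Your fallback ``standard fact'' is also misstated. Quasi-concavity with $D\mathcal F\neq0$ does not by itself give concavity of $-e^{-k\mathcal F}$ on compact sets. One needs the tangential Hessian to be negative \emph{definite} there. That does hold here, but it must be checked, e.g.\ that $\sigma_{n-1}(\lambda)>0$ when $\lambda_n<0$.
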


\begin{theorem}\label{thm:gradient}
	Assume the same conditions as in Theorem \ref{thm:main}. 
	For each sufficiently small $t>0$, let $u^t \in C^{\infty}(\overline{\Omega})$ 
	be the unique solution of the approximating Dirichlet problem \eqref{approx}.
	Then there exists a constant $C>0$, depending only on $\Omega$, $\varphi$, and $\underline u$, 
	but independent of $t$, such that for all $0<t<1$,
	\begin{equation}
		\|\nabla u^t\|_{C^0(\overline{\Omega})} \le C.
	\end{equation}
\end{theorem}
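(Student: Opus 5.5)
We prove the gradient bound for $u^t$ in the standard way: reduce the interior gradient to its boundary values, then bound the boundary gradient by barriers. The key structural fact is that $D^2u^t+tI$ has its eigenvalues in the set of Lemma \ref{lem:yuan}. Throughout, $t$ is small, say $t<t_0\le 1$.

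\textbf{Interior reduction.} By Lemma \ref{lem:yuan}(iv), $\Delta u^t + nt \ge 0$, so $u^t + \frac{t}{2}|x|^2$ is subharmonic. A subharmonic function has no interior gradient maximum principle, so we instead use the differentiated equation of Lemma \ref{lem:linearized}(ii): $\mathcal{F}^{ij}u^t_{kij}=0$, i.e.\ each derivative $u^t_k$ solves the linear elliptic equation $L(u^t_k)=0$ with $L=\mathcal{F}^{ij}\partial_{ij}$. The weak maximum principle for $L$ gives
$$
\sup_\Omega |u^t_k| \le \sup_{\partial\Omega}|u^t_k|, \quad k=1,\dots,n,
$$
hence $\sup_\Omega|\nabla u^t|\le \sqrt{n}\,\sup_{\partial\Omega}|\nabla u^t|$. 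No constants enter here, so this step is uniform in $t$. The only requirement is that $(\mathcal{F}^{ij})$ be positive definite at each point, which holds since $\mathcal{F}^{ij}\sim \mathrm{diag}\bigl((1+(\mu_i+t)^2)^{-1}\bigr)$.

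\textbf{Boundary gradient.} On $\partial\Omega$ the tangential derivatives equal those of $\varphi$, so only $u^t_\nu$ needs to be bounded. From below we use the comparison $u^t\ge \underline u$ from the proof of Theorem \ref{thm:C0}. Since both functions equal $\varphi$ on $\partial\Omega$, this gives one-sided control of the normal derivative by $|\nabla\underline u|$.

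For the other side we need an upper barrier, i.e.\ a supersolution-type function above $u^t$ that agrees with $\varphi$ on the boundary. Here I would use the inequality $\Delta(u^t+\frac t2|x|^2)\ge0$ established in Theorem \ref{thm:C0}. Let $h$ be the harmonic function with $h=\varphi+\frac t2|x|^2$ on $\partial\Omega$. Then
$$
u^t+\tfrac t2|x|^2\le h \quad\text{in } \Omega,
$$
with equality on $\partial\Omega$. Since $\Omega$ is smooth and $\varphi$ is smooth, Schauder estimates give $\|h\|_{C^1(\overline\Omega)}\le C(\Omega,\|\varphi\|_{C^{2,\alpha}})$ for all $t<1$. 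Thus $u^t\le h-\frac t2|x|^2$ with equality on $\partial\Omega$.

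Combining the two comparisons, on $\partial\Omega$ with outward normal $\nu$,
$$
\partial_\nu h - t\,x\cdot\nu \le \partial_\nu u^t \le \partial_\nu \underline u .
$$
This is because $u^t-\underline u\ge0$ vanishes on $\partial\Omega$, and $h-\frac t2|x|^2-u^t\ge0$ vanishes on $\partial\Omega$. Consequently $|\nabla u^t|\le C$ on $\partial\Omega$, and with the interior reduction the theorem follows.

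\textbf{Main obstacle.} The main point to check is the interior reduction: the maximum principle for $u^t_k$ requires enough regularity of $u^t$ up to the boundary, namely $u^t\in C^\infty(\overline\Omega)$ (assumed in the statement), and ellipticity of $L$. Both hold, so the delicate part is merely ensuring that the subsolution comparison is valid uniformly in $t$. This is Lemma \ref{lem:subsolution_preserved}, which needs $t<1$.
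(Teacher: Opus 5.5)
Your proposal is correct and follows essentially the same route as the paper. The boundary gradient comes from the two barriers $\underline u\le u^t\le w-\frac t2|x|^2$, with your Schauder remark supplying the uniform bound on $\partial_\nu w$ that the paper leaves implicit, and the interior bound comes from the maximum principle for $L=\mathcal F^{ij}\partial_{ij}$ via $\mathcal F^{ij}u^t_{kij}=0$. The only cosmetic difference is that you apply the maximum principle to each $u^t_k$, costing a harmless factor $\sqrt n$, whereas the paper applies it to the $L$-subsolution $|\nabla u^t|^2$.
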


\begin{proof}
	We first prove a uniform boundary gradient bound. From the $C^0$ estimate (Theorem \ref{thm:C0}) we have
	$$
	\underline u \le u^t \le w - \frac{t}{2}|x|^2 \quad \text{in } \Omega,
	$$
	and
	$$
	\underline u = u^t = w - \frac{t}{2}|x|^2 = \varphi \quad \text{on } \partial\Omega.
	$$
	Hence, 
	$$
	D_\nu \underline u \le D_\nu u^t \le D_\nu\left(w - \frac{t}{2}|x|^2\right) \quad \text{on } \partial\Omega,
	$$
	which implies
	$$
	\max_{\partial\Omega} |D u^t| \le C,
	$$
	where $C$ is independent of $t$.
	
Now let $v = |\nabla u^t|^2$. By Lemma \ref{lem:linearized} (ii), differentiating the approximating equation gives
$$
\mathcal{F}^{ij} u^t_{kij} = 0.
$$
Let $L = \mathcal{F}^{ij}D_{ij}$ be the linearized operator. Then
$$
L v = 2\sum_{k=1}^n \mathcal{F}^{ij} u^t_{ki} u^t_{kj} \ge 0,
$$
since $\mathcal{F}^{ij}$ is positive definite. By the maximum principle,
$$
\max_{\overline{\Omega}} v \le \max_{\partial\Omega} v.
$$
Combining this with the boundary gradient bound, we conclude
$$
\|\nabla u^t\|_{C^0(\overline{\Omega})} \le C,
$$
where $C$ is independent of $t$.
	
\end{proof}

\subsection{Second Order Estimates}
\begin{lemma} [\cite{FYZ26}, Lemma 3.2]\label{lem:dichotomy} 
	Let $\mu \in \mathbb{R}^n$ satisfy the subsolution condition
	\begin{equation}
		A(\mu) := \min_{1\le j\le n}\sum_{i\ne j}\arctan \mu_i > A_0 > (n-3)\frac{\pi}{2}.
	\end{equation}
	Then there exist constants $\delta_0>0$ and $t_0>0$, depending only on $\mu$ and $A_0$, 
	such that for any $0<t<t_0$ and any $\lambda \in \mathbb{R}^n$ satisfying
	\begin{equation}
		\sum_{i=1}^n \arctan \lambda_i = (n-2)\frac{\pi}{2} + C_0 t,
	\end{equation}
	one of the following holds:
	\begin{equation}
		\sum_{i=1}^n \frac{\mu_i-\lambda_i}{1+\lambda_i^2} \ge \delta_0 \sum_{i=1}^n \frac{1}{1+\lambda_i^2},
	\end{equation}
	or
	\begin{equation}
		\frac{1}{1+\lambda_j^2} \ge \delta_0 \sum_{i=1}^n \frac{1}{1+\lambda_i^2} \quad \text{for all } j=1,\ldots,n.
	\end{equation}
\end{lemma}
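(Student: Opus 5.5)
We argue by contradiction plus compactness, in the spirit of the Collins--Picard--Wu / Fu--Yau--Zhang dichotomy. Normalize the weights $f_i(\lambda):=\frac{1}{1+\lambda_i^2}$ and $w_i:=f_i/\sum_k f_k\in(0,1]$, so $\sum_i w_i=1$. If the second alternative fails with a small $\delta_0$, then some index $j$ has $w_j<\delta_0$. This forces $|\lambda_j|$ to be large relative to the other eigenvalues with non-negligible weight. The task is to show that this degeneracy forces the first alternative, i.e. $\sum_i w_i(\mu_i-\lambda_i)\ge\delta_0$.

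\textbf{Step 1: the sign of the degenerate eigenvalue.} On the critical or supercritical level set, Lemma \ref{lem:yuan}(i) gives $\lambda_1\ge\cdots\ge\lambda_{n-1}>0$ and $\lambda_{n-1}\ge|\lambda_n|$. Hence the largest $|\lambda_i|$ is $\lambda_1>0$, and $w_1=\min_i w_i$. Failure of the second alternative therefore means $w_1<\delta_0$, so $\lambda_1\to+\infty$ along a contradicting sequence.

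\textbf{Step 2: setting up the compactness argument.} Suppose the lemma fails. Then there are $t_k\to0$, $\delta_k\to0$ and $\lambda^{(k)}$ on the level sets $\sum\arctan\lambda_i^{(k)}=(n-2)\frac\pi2+C_0t_k$ with both $w_1^{(k)}<\delta_k$ and
\[
\sum_i w_i^{(k)}(\mu_i-\lambda_i^{(k)})<\delta_k.
\]
From $w_1^{(k)}\to0$ we get $\arctan\lambda_1^{(k)}\to\pi/2$, and so
\[
\sum_{i\ge2}\arctan\lambda_i^{(k)}\to(n-3)\tfrac\pi2 .
\]
Pass to a subsequence so that $\lambda_i^{(k)}\to\bar\lambda_i\in[-\infty,\infty]$ for $i\ge2$, and $w^{(k)}\to\bar w$ with $\bar w_1=0$. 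By the hypothesis on $\mu$,
\[
\sum_{i\ge2}\arctan\mu_i\ge A(\mu)>A_0>(n-3)\tfrac\pi2 .
\]
Thus, in the limit, the $(n-1)$-vector $\mu'=(\mu_2,\dots,\mu_n)$ lies strictly above the limiting level set $\{\sum_{i\ge2}\arctan\lambda_i=(n-3)\frac\pi2\}$, with a gap of at least $A_0-(n-3)\frac\pi2$. Note $\mu_1$ is whichever component carries zero limiting weight. Since $A(\mu)$ is a minimum over all $j$, we may drop any one index, and we use this symmetry to match the index of $\mu$ being removed.

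\textbf{Step 3: the concavity inequality (the main obstacle).} By Lemma \ref{lem:yuan}(iii), the sublevel set $\{\sum_{i\ge2}\arctan>(n-3)\frac\pi2\}$ in $\mathbb R^{n-1}$ is convex, since $(n-3)\frac\pi2$ is the critical phase in dimension $n-1$. Hence the normal $\nabla\sum\arctan=(f_i)$ at $\lambda'$ satisfies $\sum_{i\ge2}f_i(\mu_i-\lambda_i)\ge c\,|f'|$, where $c>0$ measures the strict gap. The constant should come from a quantitative version: $\sum_{i\ge2}\arctan(\lambda_i+s(\mu_i-\lambda_i))$ is concave-in-$s$ along level sets, or more precisely we would use $f_i(\mu_i-\lambda_i)\ge\arctan\mu_i-\arctan\lambda_i-{}$(convexity error).

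The delicate part is justifying this uniformly as the $\lambda_i^{(k)}$ may themselves diverge. Lemma \ref{lem:yuan} only gives convexity, not a quantitative gap at infinity. The first attempt would be to use that $\arctan$ is concave on $[0,\infty)$, so $f_i(\mu_i-\lambda_i)\ge\arctan\mu_i-\arctan\lambda_i$ whenever $\lambda_i\ge0$. The only possibly negative eigenvalue is $\lambda_n$, which is handled by the bound $|\lambda_n|\le\lambda_{n-1}$ and the size of the weight $f_n$. Summing then gives
\[
\sum_{i\ge2}w_i(\mu_i-\lambda_i)\gtrsim \Bigl(A_0-(n-3)\tfrac\pi2-o(1)\Bigr)\Big/\sum_k f_k\cdot\min f .
\]
Finally, absorb the $i=1$ term, which is $w_1(\mu_1-\lambda_1)$ with $w_1\lambda_1\to0$, and the $C_0t_k$ perturbation. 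This contradicts $\sum_i w_i(\mu_i-\lambda_i)<\delta_k\to0$. Choosing $t_0$ and $\delta_0$ from the contradiction constants completes the proof.
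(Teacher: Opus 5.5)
\textbf{There is a genuine gap in Step 3.} Steps 1--2 are fine: you reorder $\lambda$ and $\mu$ by the same permutation, get $w_1=\min_i w_i$, and get $\lambda_1\to\infty$. But Step 3 carries the whole lemma, and it fails for structural reasons, not just for lack of quantitative detail.

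First, the coordinatewise bound $f_i(\mu_i-\lambda_i)\ge\arctan\mu_i-\arctan\lambda_i-(\text{small})$ is false for the negative eigenvalue. If $\lambda_n\to-\infty$, the defect $\arctan\mu_n-\arctan\lambda_n-f_n(\mu_n-\lambda_n)$ tends to $\arctan\mu_n+\frac{\pi}{2}$. This is an order-one quantity, and the smallness of $f_n$ does not help. The inequality also fails for $\lambda_i\ge 0$ when $\mu_i$ is very negative, which the subsolution condition allows for one index. So there is no absolute positive lower bound for $\sum_{i\ge2}f_i(\mu_i-\lambda_i)$.

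Second, the claim $w_1\lambda_1\to0$ is false. Take $n=3$, let $t\to0$ fast, and set $\lambda=(M^{3/2},M,-K)$ with $K\approx M$ determined by the level set.
\begin{itemize}
\item Then $\sum_k f_k\approx 2M^{-2}$ and $w_1\approx(2M)^{-1}\to0$, so the second alternative fails.
\item Yet $w_1\lambda_1\approx\sqrt M/2\to\infty$.
\item Meanwhile the full sum $\sum_i w_i(\mu_i-\lambda_i)$ tends to $(\mu_2+\mu_3)/2$. This is positive exactly because $\arctan\mu_2+\arctan\mu_3>0$.
\end{itemize}
So the lemma rests on a cancellation between the index-one term and the others. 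The vector $\lambda'=(\lambda_2,\dots,\lambda_n)$ sits at level $(n-3)\frac{\pi}{2}+C_0t+\arctan(1/\lambda_1)$. Its surplus above the $(n-1)$-dimensional critical phase is what compensates $-f_1\lambda_1$.

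Any scheme that bounds the block $i\ge2$ and the term $i=1$ separately throws this surplus away. That includes the correct $(n-1)$-dimensional supporting-hyperplane bound $\sum_{i\ge2}f_i(\mu_i-\lambda_i)\ge\epsilon\sum_{i\ge2}f_i$. In the example it gives only about $2\epsilon M^{-2}$, against $f_1\lambda_1\approx M^{-3/2}$.

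\textbf{How to fix it.} Use convexity of the full $n$-dimensional superlevel set $\{\eta\in\mathbb R^n:\sum_i\arctan\eta_i\ge\Theta_t\}$, with $\Theta_t=(n-2)\frac{\pi}{2}+C_0t$ (Lemma~\ref{lem:convex}), tested against suitable points. No compactness is needed.
\begin{itemize}
\item Set $\gamma:=A_0-(n-3)\frac{\pi}{2}>0$.
\item Choose $\epsilon>0$ with $\sum_{i\ne j}\arctan(\mu_i-\epsilon)>A_0$ for all $j$.
\item Choose $s_0$ with $\arctan(\mu_j-\epsilon+s_0)>\frac{\pi}{2}-\frac{\gamma}{2}$ for all $j$.
\item Choose $t_0$ with $C_0t_0\le\frac{\gamma}{2}$.
\end{itemize}
Then each point $\nu^{(j)}:=\mu-\epsilon\mathbf 1+s_0e_j$ lies in this convex set. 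The set has $\lambda$ on its boundary with inner normal $(f_i(\lambda))$, so the supporting hyperplane at $\lambda$ gives
\[
\sum_i f_i(\mu_i-\lambda_i)\ \ge\ \epsilon\sum_i f_i-s_0f_j\qquad\text{for every }j .
\]
Put $\delta_0:=\min\{\epsilon/2,\epsilon/(2s_0)\}$. If $f_j<\delta_0\sum_i f_i$ for some $j$, the first alternative follows immediately. In this argument $-f_1\lambda_1$ is never isolated; it is absorbed inside a single convexity inequality. The paper itself only cites \cite{FYZ26}; this is the standard Sz\'ekelyhidi-type $\mathcal{C}$-subsolution argument.
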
	

	\begin{lemma}[\cite{FYZ26}, Lemma 3.3] \label{lem:linearized_dichotomy}
	Let $\underline u$ be a subsolution of the critical equation and let $u^t$ be the solution of the approximating problem \eqref{approx}. 
	Then there exist uniform constants $\delta_0>0$ and $t_0>0$ such that for any $0<t<t_0$ and any $x\in\Omega$, 
	one of the following holds at $x$:
	\begin{equation}
		\sum_{i,j} \mathcal{F}^{ij}\left(\underline u_{ij} - u^t_{ij}\right) \ge \delta_0 \sum_{i,j} \mathcal{F}^{ij} \delta_{ij},
	\end{equation}
	or
	\begin{equation}
		\mathcal{F}_{min} \ge \delta_0 \sum_{i,j} \mathcal{F}^{ij} \delta_{ij},
	\end{equation}
	where $\mathcal{F}_{min}$ denotes the smallest eigenvalue of the matrix $(\mathcal{F}^{ij})$.
\end{lemma}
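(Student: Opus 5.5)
The plan is to reduce the statement pointwise to the algebraic dichotomy of Lemma~\ref{lem:dichotomy}. The only real work is to control how $D^2\underline u$ pairs with the eigenframe of $D^2u^t$.

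\textbf{Step 1: diagonalize at $x$.} Fix $x\in\Omega$ and $0<t<t_0$, and set $M:=D^2u^t(x)+tI$. Rotate coordinates so that $M=\mathrm{diag}(\lambda_1,\dots,\lambda_n)$. In this frame $\mathcal F^{ij}=\delta_{ij}f_i$ with $f_i=1/(1+\lambda_i^2)$, and \eqref{approx} says that $\sum_i\arctan\lambda_i=(n-2)\frac{\pi}{2}+C_0t$. The smallest eigenvalue of $(\mathcal F^{ij})$ is $\mathcal F_{\min}=\min_j f_j$, and $\sum_{i,j}\mathcal F^{ij}\delta_{ij}=\sum_i f_i$.

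Next write
$$\underline u_{ij}-u^t_{ij}=(D^2\underline u+tI)_{ij}-M_{ij},$$
and let $\mu=(\mu_1,\dots,\mu_n)$ denote the eigenvalues of $B:=D^2\underline u(x)+tI$. Since $\arctan$ is increasing, $A(\mu)\ge A(\underline\lambda(x))>A_0$ by Lemma~\ref{lem:subsolution}. Moreover $A(\cdot)$ is invariant under permutations of its argument.

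\textbf{Step 2: pair the diagonal of $B$ with eigenvalues.} I need a lower bound for $\sum_i f_iB_{ii}$ in terms of the eigenvalues $\mu$. By the Schur--Horn theorem, or equivalently the rearrangement inequality applied to $\min_{O\in O(n)}\sum_i f_i(O^T\mathrm{diag}(\mu)O)_{ii}$, one has
$$\sum_i f_iB_{ii}\ \ge\ \sum_i f_i\,\mu_{\sigma(i)},$$
where $\sigma$ is the permutation pairing the largest $f_i$ with the smallest $\mu$. Put $\tilde\mu:=(\mu_{\sigma(1)},\dots,\mu_{\sigma(n)})$; then $A(\tilde\mu)=A(\mu)>A_0$. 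Applying Lemma~\ref{lem:dichotomy} to the pair $(\tilde\mu,\lambda)$ gives one of two alternatives. In the first,
$$\sum_{i,j}\mathcal F^{ij}(\underline u_{ij}-u^t_{ij})\ \ge\ \sum_i f_i(\tilde\mu_i-\lambda_i)\ \ge\ \delta_0\sum_i f_i,$$
which is the first conclusion. In the second, $f_j\ge\delta_0\sum_i f_i$ for every $j$, which is exactly $\mathcal F_{\min}\ge\delta_0\sum_{i,j}\mathcal F^{ij}\delta_{ij}$.

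\textbf{Step 3: uniformity of $\delta_0$ and $t_0$ (the main obstacle).} Lemma~\ref{lem:dichotomy} as stated gives constants depending on $\mu$, whereas here $\mu$ varies with $x\in\overline\Omega$ and $t$. The vectors $\tilde\mu$ range over a compact set $K$: they are bounded because $\underline u\in C^\infty(\overline\Omega)$ and $t<1$, and they satisfy $A(\tilde\mu)\ge A_0+a_0/2$ by the margin built into Lemma~\ref{lem:subsolution}.

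First, I would inspect the proof of \cite{FYZ26}, Lemma~3.2, to confirm that $\delta_0$ and $t_0$ depend only on an upper bound for $|\mu|$ and on the gap $A(\mu)-A_0$. This is plausible, since the argument is a contradiction/compactness argument in $\lambda$.

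Failing that, I would argue by contradiction. Take sequences $\tilde\mu^k\in K$, $t_k\to0$ and $\lambda^k$ violating both alternatives with $\delta_0=1/k$. Normalize by $\sum_i f_i(\lambda^k)$ and pass to a subsolution limit $\tilde\mu^\infty\in K$. This contradicts Lemma~\ref{lem:dichotomy} applied at $\tilde\mu^\infty$ with a slightly smaller $A_0$. The point to check is that a perturbation $\|\tilde\mu-\tilde\mu^\infty\|$ small changes $\sum_i f_i(\tilde\mu_i-\lambda_i)$ by at most $\|\tilde\mu-\tilde\mu^\infty\|\sum_i f_i$, which is absorbed by $\delta_0/2$.

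This yields uniform constants $\delta_0$ and $t_0$, depending only on $\underline u$, $\Omega$ and $A_0$, which completes the plan.
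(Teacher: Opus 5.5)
Your proof is correct. The paper gives no argument for this lemma beyond citing \cite{FYZ26}, Lemma 3.3. What you have written is therefore a genuine derivation of the matrix statement from the vector dichotomy of Lemma~\ref{lem:dichotomy}, together with the uniformity that the quoted lemma does not supply.

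\textbf{Comparison with the usual route.} A common way to make the vector-to-matrix passage (e.g.\ in Sz\'ekelyhidi's and Collins--Jacob--Yau's treatments of subsolutions) works with the diagonal vector $\nu=(B_{11},\dots,B_{nn})$ of $B=D^2\underline u+tI$, taken in the eigenframe of $D^2u^t+tI$. The vector lemma is applied directly to $\nu$, for which $\sum_{i,j}\mathcal F^{ij}(\underline u_{ij}-u^t_{ij})=\sum_i f_i(\nu_i-\lambda_i)$ holds with equality. To know that $\nu$ still satisfies $A(\nu)>A_0$, one combines Schur--Horn with the convexity of $\{\mu: A(\mu)>A_0\}$. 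That convexity comes from Yuan's convexity of level sets in dimension $n-1$, since $A_0>((n-1)-2)\pi/2$.

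Your rearrangement step replaces this convexity input by two simpler facts: $A(\cdot)$ is permutation invariant, and the first alternative is monotone in $\mu$. This is simpler, and it would work for any symmetric subsolution condition.

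\textbf{Uniformity.} Your Step~3 addresses a point the paper passes over in silence. Lemma~\ref{lem:dichotomy}, as quoted, gives $\delta_0,t_0$ depending on the fixed vector $\mu$, whereas here $\mu$ moves with $x$ and $t$. The contradiction argument is sound, and simpler than your wording suggests. You do not need to normalize or take limits of the possibly unbounded $\lambda^k$. It is enough to have:
\begin{itemize}
\item $\tilde\mu^k\to\tilde\mu^\infty$ in the compact set $K$;
\item the second alternative does not involve $\mu$;
\item the bound $\bigl|\sum_i f_i(\tilde\mu^k_i-\tilde\mu^\infty_i)\bigr|\le|\tilde\mu^k-\tilde\mu^\infty|_\infty\sum_i f_i$.
\end{itemize}
These give a contradiction once $k$ is large enough that $t_k<t_0(\tilde\mu^\infty)$, $|\tilde\mu^k-\tilde\mu^\infty|_\infty<\delta_0(\tilde\mu^\infty)/2$ and $1/k<\delta_0(\tilde\mu^\infty)/2$.
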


\begin{theorem}\label{prop:interior_hessian}
	Assume the same conditions as in Theorem \ref{thm:main}. 
	For each $0<t<1$, let $u^t \in C^{\infty}(\overline{\Omega})$ 
	be the unique solution of the approximating Dirichlet problem \eqref{approx}.
	Then there exists a constant $C>0$, independent of $t$, such that
	\begin{equation}
		\sup_{\Omega} |D^2u^t| \le C\left(1 + \max_{\partial\Omega} |D^2u^t|\right).
	\end{equation}
\end{theorem}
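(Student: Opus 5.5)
The plan is to follow the maximum-principle argument of Collins--Picard--Wu and Fu--Yau--Zhang, working with the concave reformulation $Q(M)=-e^{-k\mathcal F(M)}$ from Lemma \ref{lem:linearized}(iv). Write $M=D^2u^t+tI$ with eigenvalues $\lambda_1\ge\cdots\ge\lambda_n$. By Lemma \ref{lem:yuan}(i)--(ii) (applicable because the phase is $\ge (n-2)\pi/2$), we have $|\lambda_i|\le C\lambda_1$ and $\lambda_1>0$. Hence $|D^2u^t|\le C(1+\lambda_1)$, and it suffices to bound $\lambda_1$ in the interior. I would consider the test function
$$
W=\log\lambda_1(M)+\varphi(|\nabla u^t|^2)+\psi(u^t-\underline u),
$$
where $\varphi(s)=-\frac12\log(1-s/(2K))$ with $K>\sup|\nabla u^t|^2$ (Theorem \ref{thm:gradient}), and $\psi$ is a convex decreasing function of $u^t-\underline u$. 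This quantity is bounded by Theorem \ref{thm:C0}, and a natural choice is $\psi(s)=-A s$ or $\psi(s)=e^{-Bs}$. If $\lambda_1$ is not simple, I would use the standard perturbation $M-\epsilon_0\sum(\cdot)$, or work with $\log\lambda_1$ in the viscosity sense, exactly as in Collins--Picard--Wu.

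Suppose the maximum of $W$ is attained at an interior point $x_0$; otherwise we already get $\lambda_1\le C(1+\max_{\partial\Omega}|D^2u^t|)$. Choose coordinates at $x_0$ in which $M$ is diagonal. I would compute $\mathcal F^{ii}W_{ii}\le0$ and $W_i=0$. The third-derivative terms coming from $\log\lambda_1$ are $\mathcal F^{ii}\lambda_1^{-1}\lambda_{1,ii}-\mathcal F^{ii}\lambda_{1,i}^2/\lambda_1^2$. These are handled by differentiating the equation twice, as in Lemma \ref{lem:linearized}(iii): one uses the concavity term $-\mathcal F^{pq,rs}u_{1pq}u_{1rs}$ together with the positive terms $\sum_{p>1}\frac{\mathcal F^{ii}u_{1pi}^2}{\lambda_1-\lambda_p}$. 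The key special-Lagrangian identity is that $\mathcal F^{ij,pq}$ contains $-\frac{2\lambda_i}{(1+\lambda_i^2)^2}$ on diagonal entries. The needed concavity comes from $k$ being large, and it is enough to absorb the bad term $\mathcal F^{ii}\lambda_{1,i}^2/\lambda_1^2$, using $W_i=0$ in the directions $i$ with $\lambda_i$ small or negative (here $\lambda_n$ may be negative but satisfies $|\lambda_n|\le\lambda_{n-1}$). The gradient term yields the good quantity $\varphi'\sum\mathcal F^{ii}\lambda_i^2\ge c\,\varphi'\sum\mathcal F^{ii}\lambda_i^2$, which is comparable to $\sum_i\frac{\lambda_i^2}{1+\lambda_i^2}$. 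The term $\psi'\mathcal F^{ii}(u^t_{ii}-\underline u_{ii})$ then provides the decisive positivity.

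For that last step I would invoke the dichotomy of Lemma \ref{lem:linearized_dichotomy}, with the constant $A_0>(n-3)\pi/2$ supplied by Lemma \ref{lem:subsolution}. In the first case, $\mathcal F^{ij}(\underline u_{ij}-u^t_{ij})\ge\delta_0\sum\mathcal F^{ii}$. With $\psi'<0$ large this gives $-\psi'\delta_0\sum\mathcal F^{ii}$ as a good term, which dominates all error terms of size $C\sum\mathcal F^{ii}$, a contradiction unless $\lambda_1$ is bounded. In the second case, $\mathcal F^{11}=\frac1{1+\lambda_1^2}\ge\delta_0\sum\mathcal F^{ii}\ge\delta_0/(1+\lambda_n^2)$, so all eigenvalues are comparable. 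Then the good gradient term $\varphi'\mathcal F^{11}\lambda_1^2\gtrsim\delta_0$ controls the errors, again bounding $\lambda_1$.

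The main obstacle is the third-order analysis, namely showing that the concavity and the $\frac{\mathcal F^{ii}u_{1pi}^2}{\lambda_1-\lambda_p}$ terms absorb $\mathcal F^{ii}\lambda_{1,i}^2/\lambda_1^2$ when the phase is critical. At the critical phase the operator is not concave itself, only $Q$ is, and $\lambda_n$ may be negative. My plan here is to split the indices $i$ into those with $\lambda_i\ge\eta\lambda_1$ (where the concavity of $Q$ with $k$ large controls the term) and the rest (where $W_i=0$ converts $\lambda_{1,i}/\lambda_1$ into gradient and $\psi'$ terms, which are controlled by $\varphi''$ and by the choice $\psi''\ge\epsilon\psi'^2$). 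All constants must be kept independent of $t$; this holds because $t<1$ shifts the eigenvalues by at most $1$.
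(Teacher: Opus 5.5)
Your route is genuinely different from the paper's, and much heavier than the statement needs.

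\textbf{The paper's argument.} It uses no test function. Since the equation has constant coefficients, differentiating $Q(D^2u^t+tI)=\mathrm{const}$ twice in a fixed unit direction $e$ gives $Q^{ij}\partial_{ij}(D_{ee}u^t)=-Q^{ij,rs}u^t_{eij}u^t_{ers}\ge 0$. The maximum principle then gives $D_{ee}u^t\le\max_{\partial\Omega}D_{ee}u^t$ for every $e$. Choosing $e$ along the eigenvectors at an interior point bounds all eigenvalues of $D^2u^t$ from above. The lower bound comes from $\Delta u^t+nt\ge 0$; your observation $|\lambda_i|\le\lambda_1$ from Lemma \ref{lem:yuan}(i) would serve equally well here. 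This argument needs no $C^0$ or gradient bound, no $\underline u$, and no dichotomy. The key inequality carries no constant, so uniformity in $t$ is automatic.

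\textbf{Why your machinery is not needed here.} Your $W$ and the dichotomy of Lemma \ref{lem:linearized_dichotomy} are designed to beat error terms of size $C\sum_i\mathcal F^{ii}$ that come from $x$-dependence or curvature. Here there are none, since $\mathcal F^{ij}\partial_{ij}|\nabla u^t|^2=2\sum_i\mathcal F^{ii}(u^t_{ii})^2$ exactly. The only third-order input your computation needs at $x_0$ is $\mathcal F^{ij}\partial_{ij}u^t_{11}=-\mathcal F^{ij,rs}u^t_{1ij}u^t_{1rs}\ge 0$, which is precisely the paper's mechanism.

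\textbf{The role of $k$.} This sign does not come from taking $k$ large, as you suggest. Because $\mathcal F^{ij}u^t_{1ij}=0$ (Lemma \ref{lem:linearized}(ii)), one has $Q^{ij,rs}u^t_{1ij}u^t_{1rs}=ke^{-k\mathcal F}\mathcal F^{ij,rs}u^t_{1ij}u^t_{1rs}$, so the $k$-term drops out. The sign comes purely from the convexity of $\{\sum_i\arctan\lambda_i\ge\Theta_t\}$ (Lemma \ref{lem:convex}), which holds uniformly down to the critical phase.

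\textbf{A simpler way to close your plan.} With that sign in hand, your index splitting is unnecessary. Use $W_i=0$ in every direction and drop $\psi$. The bad term $\mathcal F^{ii}\lambda_{1,i}^2/\lambda_1^2=\varphi'^2\mathcal F^{ii}\bigl((|\nabla u^t|^2)_i\bigr)^2$ is then absorbed by $\varphi''=2\varphi'^2$. What remains is $\mathcal F^{ii}W_{ii}\ge 2\varphi'\sum_i\mathcal F^{ii}(u^t_{ii})^2\ge\varphi'>0$ once $\lambda_1$ is large, which contradicts an interior maximum.

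So your plan can be completed, but only by rediscovering the paper's short argument inside a much longer computation. What your approach would buy is robustness to variable phase or curved backgrounds, which this statement does not require.
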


\begin{proof}
	Fix $k \in \{1,\dots,n\}$, differentiating the equation twice with respect to $x_k$ gives
	$$
	Q^{ij}u^t_{kkij} + Q^{ij,rs}u^t_{kij}u^t_{krs} = 0.
	$$
	Since $Q$ is concave, we have $Q^{ij,rs}u^t_{kij}u^t_{krs} \le 0$. Hence
	$$
	Q^{ij}\partial_{ij}(D_{kk}u^t) \ge 0 \quad \text{in } \Omega.
	$$
	By Lemma \ref{lem:linearized} (iv), the matrix $[Q^{ij}]$ is positive definite, so the maximum principle applies to $D_{kk}u^t$. Therefore,
	$$
	D_{kk}u^t(x) \le \max_{\partial\Omega} D_{kk}u^t \le \max_{\partial\Omega} |D^2u^t|, \quad \forall x \in \Omega. 
	$$
	Now fix $x_0 \in \Omega$ and rotate coordinates so that $D^2u^t(x_0)$ is diagonal with eigenvalues $\mu_1 \ge \cdots \ge \mu_n$. For each $k=1,\dots,n$ gives
	$$
	\mu_i(x_0) \le \max_{\partial\Omega} |D^2u^t|, \quad i=1,\dots,n. 
	$$
	On the other hand, by Lemma \ref{lem:yuan} (iv), since 
	$\sum_{i=1}^n \arctan(\mu_i + t) \ge (n-2)\pi/2$, we have
	$$
	\sum_{i=1}^n (\mu_i + t) \ge 0,
	$$
	i.e.
	$$
	\Delta u^t(x_0) + nt \ge 0.
	$$
	Since $0<t<1$, this implies $\Delta u^t(x_0) \ge -n$. Thus
	$$
	\mu_n(x_0) = \Delta u^t(x_0) - \sum_{i=1}^{n-1} \mu_i(x_0)
	\ge -n - (n-1)\max_{\partial\Omega} |D^2u^t|. 
	$$
	We obtain for all $i=1,\dots,n$,
	$$
	|\mu_i(x_0)| \le C\left(1 + \max_{\partial\Omega} |D^2u^t|\right).
	$$
	Since orthogonal transformations preserve the Frobenius norm,
	$$
	|D^2u^t(x_0)| = \left(\sum_{i=1}^n \mu_i(x_0)^2\right)^{1/2}
	\le C\left(1 + \max_{\partial\Omega} |D^2u^t|\right).
	$$
	Since $x_0 \in \Omega$ is arbitrary, the desired estimate follows.
\end{proof}

At any point $x_0 \in \partial\Omega$, choose coordinates $x_1,\ldots,x_n$ with origin at $x_0$
such that the positive $x_n$-axis is in the direction of the interior normal of
$\partial\Omega$ at $0$. Denote $x' = (x_1,\ldots,x_{n-1})$. Near $0$, we may represent $\partial\Omega$ as
a graph
\begin{equation}
	x_n = \rho(x').
\end{equation}
Since $u^t = \underline u = \varphi$ on $\partial\Omega$, we have
$$
(u^t - \underline u)(x', \rho(x')) = 0.
$$
Differentiating this identity twice with respect to $x_\alpha$, $x_\beta$ ($\alpha,\beta < n$) and evaluating at $0$ gives
\begin{equation}
	u^t_{\alpha\beta}(0) = \underline u_{\alpha\beta}(0) - (u^t_n(0)-\underline u_{n}(0))\rho_{\alpha\beta}(0), \qquad \alpha,\beta < n. 
\end{equation}
By the boundary gradient estimate (Theorem \ref{thm:gradient}), we have
$$
|u^t_n(0)| \le C,
$$
where $C$ is independent of $t$. Since $\rho \in C^\infty(\partial\Omega)$, the quantities $\rho_{\alpha\beta}(0)$ are bounded. Also, $\underline u \in C^\infty(\overline{\Omega})$, so $\underline u_{\alpha\beta}(0)$ is bounded. Therefore, 
\begin{equation}
	|u^t_{\alpha\beta}(0)| \le C, \qquad \alpha,\beta < n, 
\end{equation}
where $C$ is independent of $t$.

\begin{lemma}\label{lem:boundary_barrier}
	There exist uniform constants $\rho_0>0$, $\epsilon_0>0$, $N>0$ such that for all $0<t<1$, the function
$$
	v := (u^t - \underline u) + \epsilon_0 d - \frac{N}{2}d^2
	$$
	satisfies $v \ge 0$ on $\overline{\Omega}_{\rho_0}$ and
$$
	\mathcal{F}^{ij}\partial_{ij}v \le -\delta_0\sum_i F^{ii} \quad \text{in } \Omega_{\rho_0},
	$$
	where $\Omega_{\rho_0} := \{x \in \Omega : d(x) < \rho_0\}$.
\end{lemma}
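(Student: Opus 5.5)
The plan is to compute $L v=\mathcal F^{ij}\partial_{ij}v$ explicitly. We then split according to the two alternatives of Lemma \ref{lem:linearized_dichotomy}. The three parameters are fixed in the order $N$ (large), then $\epsilon_0$ (small), then $\rho_0$ (small relative to both).

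Nonnegativity is the easy part. By Theorem \ref{thm:C0} and the comparison principle (Lemma \ref{lem:subsolution_preserved}), $u^t\ge\underline u$ in $\Omega$. Moreover, $\epsilon_0 d-\frac N2 d^2=d(\epsilon_0-\frac N2 d)\ge0$ as long as $d\le 2\epsilon_0/N$. So $v\ge0$ on $\overline\Omega_{\rho_0}$ once $\rho_0\le 2\epsilon_0/N$, with $\rho_0$ also small enough that $d$ is smooth there.

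For the differential inequality, write $\mathcal F=\sum_i\mathcal F^{ii}$, and note $|D^2 d|\le C$, $|Dd|=1$ on $\Omega_{\rho_0}$. We have
\[
L v = L(u^t-\underline u)+(\epsilon_0-Nd)\,\mathcal F^{ij}d_{ij}-N\mathcal F^{ij}d_id_j ,
\]
and $|(\epsilon_0-Nd)\mathcal F^{ij}d_{ij}|\le C(\epsilon_0+N\rho_0)\mathcal F$.

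\textbf{Case 1:} $\mathcal F^{ij}(\underline u_{ij}-u^t_{ij})\ge\delta_0\mathcal F$. Drop the nonpositive term $-N\mathcal F^{ij}d_id_j$. This gives
\[
Lv\le(-\delta_0+C\epsilon_0+CN\rho_0)\mathcal F\le-\tfrac{\delta_0}{2}\mathcal F,
\]
provided $C\epsilon_0\le\delta_0/4$ and $CN\rho_0\le\delta_0/4$.

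\textbf{Case 2:} $\mathcal F_{\min}\ge\delta_0\mathcal F$. Then $\mathcal F^{ij}d_id_j\ge\mathcal F_{\min}\ge\delta_0\mathcal F$. Since $D^2\underline u$ is bounded, $-\mathcal F^{ij}\underline u_{ij}\le C\mathcal F$. The remaining task is to show
\[
\mathcal F^{ij}u^t_{ij}\le C\mathcal F
\]
with $C$ independent of $t$. Granting this, $Lv\le(C-N\delta_0+C\epsilon_0+CN\rho_0)\mathcal F$. Choosing first $N$ large, and then $\epsilon_0$ and $\rho_0$ as above, makes this $\le-\delta_0\mathcal F$ after renaming $\delta_0$.

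The bound $\mathcal F^{ij}u^t_{ij}\le C\mathcal F$ is where I expect the real difficulty. Here $f_i=1/(1+\lambda_i^2)$, $\lambda$ are the eigenvalues of $D^2u^t+tI$, and
\[
\mathcal F^{ij}u^t_{ij}=\sum_i\lambda_if_i-t\mathcal F .
\]
The quantity $\sum\lambda_if_i$ is bounded but not obviously controlled by $\mathcal F$ when the eigenvalues are large. I would use the equation together with the expansion
\[
\frac{\lambda}{1+\lambda^2}=\operatorname{sgn}(\lambda)\frac{\pi}{2}-\arctan\lambda+O\big(|\lambda|^{-3}\big),
\]
valid for $|\lambda|\ge1$. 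Eigenvalues with $|\lambda_i|\le C$ contribute at most $C f_i$, since then $f_i$ is bounded below. For the large ones, Lemma \ref{lem:yuan}(i) says at most $\lambda_n$ is negative. Summing the expansion over the large eigenvalues and using $\sum\arctan\lambda_i=(n-2)\frac\pi2+C_0t$, the $\pm\frac\pi2$ terms combine with the arctangents into $-C_0t$ plus contributions already bounded by $C\mathcal F$. The error terms satisfy $\sum|\lambda_i|^{-3}\le C\sum f_i^{3/2}\le C\mathcal F$. In the subcase where no eigenvalue is negative and large, the phase is near $n\frac\pi2$ or some eigenvalue is bounded. The comparability $f_j\ge\delta_0\mathcal F$ from Case 2 then forces $\mathcal F$ to be bounded below, so $\sum\lambda_if_i\le n/2\le C\mathcal F$ trivially.

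The bookkeeping of these subcases, namely which eigenvalues are bounded and whether $\lambda_n$ is large negative, is the step I would check most carefully.
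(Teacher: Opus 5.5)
Your proof is correct, and it departs from the paper only in the alternative where the $\mathcal F^{ii}$ are comparable.

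\textbf{What the paper does there.} It uses the concavity of $Q=-e^{-k\mathcal F}$ together with the subsolution inequality $F(D^2\underline u+tI)\ge\Theta_t$ of Lemma~\ref{lem:subsolution_preserved}. This gives the sign $\mathcal F^{ij}\partial_{ij}(u^t-\underline u)\le 0$, and then the term $-N\mathcal F^{ij}d_id_j\le -N\delta_0\sum_i\mathcal F^{ii}$ dominates. The paper also has an extra case $|\lambda|\le R$, which is redundant once the dichotomy holds at every point.

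\textbf{What you do instead.} You use nothing about $\underline u$ beyond $|D^2\underline u|\le C$. You prove only the weaker bound $\mathcal F^{ij}\partial_{ij}(u^t-\underline u)\le C\sum_i\mathcal F^{ii}$ and absorb it by fixing $N$ large before $\epsilon_0$ and $\rho_0$. That ordering also makes explicit why $N\rho_0\le 2\epsilon_0$ keeps the $d_{ij}$ term small, a point the paper glosses over.

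\textbf{Your step for $\sum_i\lambda_if_i\le C\sum_if_i$.} The expansion argument holds up.
\begin{itemize}
\item If some $|\lambda_i|\le K_0$, then $\sum_if_i$ is bounded below and the bound is trivial. This, not the comparability from the dichotomy, is what your last subcase actually needs.
\item If all $|\lambda_i|>K_0$, choose $K_0$ with $n\arctan K_0>\sup_t\Theta_t$. Then Lemma~\ref{lem:yuan}(i) leaves exactly one negative eigenvalue, and the sum equals $-C_0t+O(\sum_if_i)$.
\end{itemize}

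\textbf{A shortcut.} You can skip this bookkeeping entirely. By Lemma~\ref{lem:yuan}(iii), the set $\{\mu:\sum_i\arctan\mu_i\ge\Theta_t\}$ is convex. It contains $(K,\dots,K)$ for $K$ large, and $(f_i)$ is an inner normal at $\lambda$. Hence $\sum_if_i(K-\lambda_i)\ge 0$, which is your bound in one line.

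\textbf{Trade-off.} The paper's route is shorter and yields a sharper sign. Yours is more elementary and holds pointwise on the whole level set. It also avoids relying on the concavity of $Q$, whose constant $k$ is not obviously uniform as $t\to 0$ in the critical case.
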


\begin{proof}
	Let $d(x) := \operatorname{dist}(x,\partial\Omega)$ be the distance function, which is smooth in a sufficiently small neighborhood of $\partial\Omega$. $\rho_0\leq2\epsilon_0 /N$ after $\epsilon_0, N$ being fixed.
	
	We compute
	$$
	\mathcal F^{ij}\partial_{ij}v
	= \mathcal F^{ij}\partial_{ij}(u^t-\underline u)
	+ (\epsilon_0 - Nd)\mathcal F^{ij}\partial_{ij}d
	- N \mathcal F^{ij}\partial_i d\partial_j d. 
$$
\textbf{Case 1:} $|\lambda| \le R$. 
In this case, there exist uniform constants $0 < c_1 \le \mathcal F^{ij} \le C_1$, 
so $\mathcal F^{ij}\nabla_i d\nabla_j d \ge c_1$. Taking $N$ sufficiently large 
yields the desired inequality.

	\noindent\textbf{Case 2:} $|\lambda| > R$. 
	By Lemma \ref{lem:linearized_dichotomy}, one of the following holds:

	\noindent\textbf{Case 2a:} 
	$$
	\mathcal F^{ij}\nabla_{ij}(u-\underline u) \le -\delta_0 \sum_i \mathcal F^{ii}.
	$$
	Using $|\mathcal F^{ij}\nabla_{ij}d| \le C\sum_i \mathcal F^{ii}$, we have
	$$
	\begin{aligned}
		\mathcal F^{ij}\nabla_{ij}v
		&= \mathcal F^{ij}\nabla_{ij}(u-\underline u)
		+ (\epsilon_0 - Nd)\mathcal F^{ij}\nabla_{ij}d
		- N\mathcal F^{ij}\nabla_i d\nabla_j d \\
		&\le -\delta_0 \sum_i \mathcal F^{ii}
		+ C(\epsilon_0 + Nd)\sum_i \mathcal F^{ii}
		- N\mathcal F^{ij}\nabla_i d\nabla_j d \\
		&\le -\delta_0 \sum_i \mathcal F^{ii}
		+ C(\epsilon_0 + Nd)\sum_i \mathcal F^{ii}.
	\end{aligned}
	$$
	Choose $\epsilon_0>0$  sufficiently small  and  $N$  large such that
	$$
	C(\epsilon_0 + Nd) \le \frac{\delta_0}{2}.
	$$
	Then
	$$
	\mathcal F^{ij}\nabla_{ij}v \le -\frac{\delta_0}{2}\sum_i \mathcal F^{ii}.
	$$
	\noindent\textbf{Case 2b:} 
	$Q$ is concave. 
	Since $u^t$ is a solution and $\underline u$ is a subsolution of the approximating problem, 
	we have $Q(D^2u^t+tI) \le Q(D^2\underline u+tI)$. By concavity of $Q$,
$$
	0<Q(D^2\underline u+tI)-Q(D^2u^t+tI)\leq Q^{ij}(\underline u-u^t)_{ij}.
	$$
	Since $Q^{ij}=ke^{-kF}\mathcal F^{ij}$ and $ke^{-kF}>0$, we obtain
$$
	\mathcal F^{ij}\partial_{ij}(u^t-\underline u) \le 0.
	$$
	There exists $\delta_0>0$ such that
$$
	\mathcal F^{ii} \ge \delta_0 \sum_k \mathcal F^{kk} \quad \text{for all } i.
$$
	Since $|\nabla d|=1$,
$$
	\mathcal F^{ij}\partial_i d\partial_j d = \sum_i \mathcal F^{ii}(\partial_i d)^2 \ge \delta_0 \sum_k \mathcal F^{kk}.
$$
	Also, $|\mathcal F^{ij}\partial_{ij}d| \le C\sum_i \mathcal F^{ii}$. Hence, 
	$$
	\begin{aligned}
		\mathcal F^{ij}\partial_{ij}v
		&= \mathcal F^{ij}\partial_{ij}(u^t-\underline u) + (\epsilon_0 - Nd)\mathcal F^{ij}\partial_{ij}d - N \mathcal F^{ij}\partial_i d\partial_j d \\
		&\le 0 + C(\epsilon_0 + Nd)\sum_i \mathcal F^{ii} - N\delta_0 \sum_i \mathcal F^{ii}.
	\end{aligned}
	$$
	Taking $N$ sufficiently large and $\epsilon_0$ sufficiently small yields
$$
	\mathcal F^{ij}\partial_{ij}v \le - \delta_0 \sum_i \mathcal F^{ii}.
	$$
\end{proof}
	\begin{lemma}\label{lem:mixed_boundary}
		Under the assumptions of Theorem \ref{thm:main}, there exists a constant $C>0$,
		independent of $t$, such that for all $0<t<t_0$,
		\begin{equation}
			|D_{\alpha n}u^t|_{L^\infty(\partial\Omega)} \le C, \qquad 1\le \alpha \le n-1.
		\end{equation}
	\end{lemma}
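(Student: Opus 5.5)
We use the standard tangential-derivative barrier argument of Caffarelli--Nirenberg--Spruck and Guan, with the barrier function $v$ of Lemma~\ref{lem:boundary_barrier}. Fix $x_0\in\partial\Omega$, placed at the origin as above with $\partial\Omega=\{x_n=\rho(x')\}$ near $0$. For $\alpha<n$, introduce the tangential rotation field
\[
T_\alpha := \partial_\alpha + \sum_{\beta<n}\rho_{\alpha\beta}(0)\,(x_\beta\partial_n - x_n\partial_\beta),
\]
which is tangent to $\partial\Omega$ at $0$ to second order. Since $u^t-\underline u=0$ on $\partial\Omega$, the function $T_\alpha(u^t-\underline u)$ satisfies
\[
|T_\alpha(u^t-\underline u)|\le C|x|^2 \quad\text{on } \partial\Omega\cap B_{\rho_0},
\]
using only the $C^1$ bound of Theorem~\ref{thm:gradient} and the smoothness of $\underline u$ and $\partial\Omega$. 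In the interior, $|T_\alpha(u^t-\underline u)|\le C$ by the same gradient bound.

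The next step is to estimate $\mathcal F^{ij}\partial_{ij}$ applied to $T_\alpha(u^t-\underline u)$.

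\begin{itemize}
\item $T_\alpha$ is a combination of translations and infinitesimal rotations, and the operator $M\mapsto F(M+tI)$ is rotation invariant. Differentiating the equation as in Lemma~\ref{lem:linearized}(ii) therefore gives $\mathcal F^{ij}\partial_{ij}(T_\alpha u^t)=0$; the $tI$ term commutes with rotations.
\item The $\underline u$ part satisfies $|\mathcal F^{ij}\partial_{ij}T_\alpha\underline u|\le C\sum_i\mathcal F^{ii}$.
\end{itemize}

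Hence
\[
|\mathcal F^{ij}\partial_{ij}\,T_\alpha(u^t-\underline u)|\le C\sum_i\mathcal F^{ii}.
\]

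Now consider on $\Omega_{\rho_0}\cap B_{\rho_0}$ the comparison function
\[
\Psi := A_1 v + B_1|x|^2 \pm T_\alpha(u^t-\underline u).
\]
For the interior inequality, the terms behave as follows:
\begin{itemize}
\item by Lemma~\ref{lem:boundary_barrier}, $\mathcal F^{ij}\partial_{ij}(A_1v)\le -A_1\delta_0\sum\mathcal F^{ii}$;
\item $\mathcal F^{ij}\partial_{ij}|x|^2 = 2\sum\mathcal F^{ii}$;
\item the last term is bounded by $C\sum\mathcal F^{ii}$.
\end{itemize}
Choosing $A_1\gg B_1$ gives $\mathcal F^{ij}\partial_{ij}\Psi\le 0$. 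For the boundary inequality, $\Psi\ge0$ on $\partial\Omega\cap B_{\rho_0}$ once $B_1\ge C$, using $v=0$ there. On $\Omega\cap\partial B_{\rho_0}$ we have $\Psi\ge0$ once $B_1\rho_0^2\ge C$, using $v\ge0$. The maximum principle then yields $\Psi\ge0$ in the region. Since $\Psi(0)=0$, we get $\partial_n\Psi(0)\ge0$, i.e.
\[
|\partial_n T_\alpha(u^t-\underline u)(0)|\le A_1|\partial_n v(0)|\le C.
\]
At $0$ this gives $|u^t_{\alpha n}(0)|\le C$, because the remaining terms of $\partial_nT_\alpha u^t$ involve only first derivatives and $\rho_{\alpha\beta}(0)u^t_{\beta n}$-type terms that vanish at the origin.

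The main obstacle is the boundary-barrier input: every constant must be independent of $t$, including at points where $|\lambda|$ is large and the operator degenerates. That independence is exactly what Lemma~\ref{lem:boundary_barrier} (via the dichotomy in Lemma~\ref{lem:linearized_dichotomy}) supplies, giving the coercive term $-\delta_0\sum\mathcal F^{ii}$ that dominates the $C\sum\mathcal F^{ii}$ error. One must also verify that the rotation-invariance step truly produces no error term for $u^t$. Should that fail, I would instead absorb the commutator into the $C\sum\mathcal F^{ii}$ bound, using the $C^1$ estimate and $|\mathcal F^{ij}u^t_{ij}|\le C\sum\mathcal F^{ii}(1+|\lambda_i|)/(1+\lambda_i^2)\le C\sum\mathcal F^{ii}$.
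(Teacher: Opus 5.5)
Your proposal is correct and is essentially the paper's proof: the same field $T_\alpha$ built from a translation and infinitesimal rotations (so $\mathcal F^{ij}\partial_{ij}(T_\alpha u^t)=0$ holds exactly, the rotational terms cancelling because $(\mathcal F^{ij})$ commutes with $D^2u^t$), the same comparison function $A v+B|x|^2\pm T_\alpha(u^t-\underline u)$ with $v$ from Lemma~\ref{lem:boundary_barrier}, and the same normal-derivative conclusion at $x_0$. Your fallback is not needed, which is fortunate: the inequality $\sum_i(1+|\lambda_i|)/(1+\lambda_i^2)\le C\sum_i 1/(1+\lambda_i^2)$ fails when all $|\lambda_i|$ are large (e.g.\ $\lambda_1,\dots,\lambda_{n-1}\to+\infty$, $\lambda_n\to-\infty$), and such configurations are allowed for phase near $(n-2)\pi/2$ before any $C^2$ bound is known.
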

	
	\begin{proof}
		Fix a point $x_0 \in \partial\Omega$ and choose local coordinates with $x_0=0$,
		$x_n$ the interior normal, and $x_\alpha$ ($\alpha<n$) tangential directions.
		Near $0$, $\partial\Omega$ is given by $x_n=\rho(x')$, with $\rho(0)=0$, $\nabla_{x'}\rho(0)=0$.
		
		For $1\le \alpha \le n-1$, define the approximate tangential operator
	$$
		T_\alpha := \partial_\alpha + \sum_{\beta=1}^{n-1} \rho_{\alpha\beta}(0)
		\left(x_\beta\partial_n - x_n\partial_\beta\right).
	$$
		This operator agrees with the true tangential derivative $\partial_\alpha + \rho_\alpha\partial_n$ 
		on $\partial\Omega$ up to second order:
	$$
		T_\alpha = \partial_\alpha + \rho_\alpha\partial_n + O(|x'|^2)\partial_n 
		- \sum_\beta \rho_{\alpha\beta}(0)\rho(x')\partial_\beta.
	$$
		
		Since $u^t = \underline u = \varphi$ on $\partial\Omega$, we have
		\begin{equation}
			|T_\alpha(u^t - \underline u)| \le C|x'|^2 \quad \text{on } \partial\Omega. 
		\end{equation}
		
		Also, since $T_\alpha$ is an infinitesimal rotation up to lower-order terms and 
		$F$ is rotationally invariant, applying $T_\alpha$ to the approximating equation 
		$F(D^2u^t+tI)=\Theta_t$ yields
	$$
		L(T_\alpha u^t) = 0.
	$$
		Using the subsolution property of $\underline u$, we obtain
		\begin{equation}
			|L(T_\alpha(u^t - \underline u))| \le C\sum_i \mathcal F^{ii} \quad \text{in } \Omega. 
		\end{equation}
		
		Let $v$ be the barrier function from Lemma \ref{lem:boundary_barrier}, and set
		$\Lambda := \Omega \cap B_\mu(0)$ with $\mu>0$ sufficiently small.
		
		Choose constants $A>0$ and $B>0$, independent of $t$, such that
	$$
		L\left(A v + B|x|^2 \pm T_\alpha(u^t - \underline u)\right) \le 0 \quad \text{in } \Lambda,
	$$
		and
	$$
		A v + B|x|^2 \pm T_\alpha(u^t - \underline u) \ge 0 \quad \text{on } \partial\Lambda.
	$$

		By the maximum principle,
	$$
		A v + B|x|^2 \pm T_\alpha(u^t - \underline u) \ge 0 \quad \text{in } \Lambda.
	$$
		Since equality holds at $x=0$, we have
	$$
		\partial_n\left(A v + B|x|^2 \pm T_\alpha(u^t - \underline u)\right)(0) \ge 0.
	$$
		Using the definition of $v$ and the boundary gradient estimate (Theorem \ref{thm:gradient}), 
		we obtain
	$$
		|D_{\alpha n}u^t(0) - D_{\alpha n}\underline u(0)| \le C.
	$$
		Hence
$$
		|D_{\alpha n}u^t(0)| \le C.
	$$
		Since $0\in\partial\Omega$ is arbitrary, the desired estimate follows.
	\end{proof}
	\subsection{Boundary normal estimate}
	
	We first establish the lower bound. Since \(\Delta u^t \ge -n\) (by Lemma \ref{lem:yuan} (iv) and \(0<t<1\)) and \(|D_{\alpha\beta}u^t|\le C\) for \(\alpha,\beta<n\), we have
	\begin{equation}
		u^t_{nn} = \Delta u^t - \sum_{\alpha=1}^{n-1} u^t_{\alpha\alpha} \ge -n - (n-1)C \ge -C.
	\end{equation}
	Thus
	\begin{equation}
		u^t_{nn} \ge -C \quad \text{on } \partial\Omega. \label{eq:unn_lower}
	\end{equation}
	
	Now we establish the upper bound. Let \(\lambda'(u^t_{\alpha\beta})\) denote the eigenvalues of the \((n-1)\times(n-1)\) matrix \(u^t_{\alpha\beta}\).
	
	Let \(x_0 \in \partial\Omega\) be a point where the function
$$
	x \mapsto \sum_{\alpha=1}^{n-1} \arctan \lambda'_\alpha(D^2u^t(x))
$$
	attains its minimum on \(\partial\Omega\).
	
	Define the reduced operator, for a fixed large constant \(R>0\),
$$
	\tilde F(M) := \sum_{\alpha=1}^{n-1} \arctan \lambda_\alpha(M) + \arctan R.
$$
	Define
$$
	\tilde F_0^{\alpha\beta} := \frac{\partial \tilde F}{\partial M_{\alpha\beta}}(u^t_{\alpha\beta}(x_0)).
$$
	
	\begin{lemma}\label{lem:unn_relation}
		For any \(0<t<1\) and any \(x\in\partial\Omega\),
$$
		\sum_{\alpha=1}^{n-1} \arctan \lambda'_\alpha(u^t_{\alpha\beta}(x))
		\ge
		\sum_{i=1}^n \arctan \lambda_i(D^2u^t(x))
		-
		\arctan u^t_{nn}(x).
$$
	\end{lemma}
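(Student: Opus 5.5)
The plan is to read the inequality as a statement about \emph{pinching}: replace the Hessian by its block-diagonal part, with the tangential block $(u^t_{\alpha\beta})$ and the normal entry $u^t_{nn}$. Then use the convexity of the superlevel sets of the phase operator (Lemma \ref{lem:convex}, or Lemma \ref{lem:yuan}(iii) in closed form). No eigenvalue perturbation or interlacing estimate is needed. Cauchy interlacing alone cannot work: it gives $\sum_\alpha\arctan\lambda'_\alpha\ge\sum_{i\ge2}\arctan\lambda_i$, and to finish one would need $u^t_{nn}\ge\lambda_1$, which is false in general. The inequality also fails for arbitrary symmetric matrices; for $n=2$ and $M=\begin{pmatrix}-1&1\\1&-1\end{pmatrix}$ the pinched phase is smaller than the original. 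So the phase constraint from the equation must enter.

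\textbf{Step 1 (pinching identity).} Fix $x\in\partial\Omega$ and the boundary coordinates above, and let $M$ be a symmetric matrix. Put $S=\mathrm{diag}(1,\dots,1,-1)$, which is orthogonal. Then
$$
P(M):=\tfrac12\big(M+SMS\big)=\begin{pmatrix}(M_{\alpha\beta})_{\alpha,\beta<n}&0\\0&M_{nn}\end{pmatrix}.
$$
Since $SMS$ is orthogonally conjugate to $M$, we have $F(SMS)=F(M)$. The eigenvalues of $P(M)$ are $\lambda'(M_{\alpha\beta})$ together with $M_{nn}$, so
$$
F(P(M))=\sum_{\alpha=1}^{n-1}\arctan\lambda'_\alpha(M_{\alpha\beta})+\arctan M_{nn}.
$$

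\textbf{Step 2 (convexity).} Suppose $F(M)=s\ge (n-2)\pi/2$. By Lemma \ref{lem:convex} with $\Theta=s$, the set $\{\lambda:\sum\arctan\lambda_i\ge s\}$ is convex. It is also symmetric in the $\lambda_i$, so the matrix set $\{N\in\mathrm{Sym}(n):F(N)\ge s\}$ is convex; this is the standard fact already used in Lemma \ref{max-sub}. Both $M$ and $SMS$ lie in this set, hence so does their midpoint $P(M)$. Therefore $F(P(M))\ge F(M)$, which is exactly
$$
\sum_{\alpha}\arctan\lambda'_\alpha(M_{\alpha\beta})\ge F(M)-\arctan M_{nn}.
$$

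\textbf{Step 3 (application, and the main obstacle).} Take $M=D^2u^t(x)+tI$. Its phase equals $\Theta_t=(n-2)\pi/2+C_0t\ge (n-2)\pi/2$ by \eqref{approx}, so Step 2 applies. Since $M_{\alpha\beta}=u^t_{\alpha\beta}+t\delta_{\alpha\beta}$ and $M_{nn}=u^t_{nn}+t$, this yields the stated inequality with every eigenvalue and $u^t_{nn}$ shifted by $t$. The shifted version is the one that matters for the normal estimate, because $\tilde F$ and the equation \eqref{approx} are both in terms of $D^2u^t+tI$.

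The delicate point is the literal unshifted form. If one applies Step 2 directly to $M=D^2u^t$, one needs $F(D^2u^t)\ge(n-2)\pi/2$, and this can fail since $F(D^2u^t)<\Theta_t$. I would handle this by interpreting the lemma in the paper's convention: all phases are those of the operator in \eqref{approx}, that is, of $D^2u^t+tI$, and Step 2 for that matrix is then the full proof. If a genuinely unshifted statement is needed, I would try to derive it from the shifted one. One route is to let $t\to0$ using the uniform bounds. Another is to note that shifting by $t$ changes each side by $O(t)\sum(1+\lambda^2)^{-1}$, which is at most $nt$ on each side. I expect this bookkeeping, rather than the convexity argument, to be the only obstacle.
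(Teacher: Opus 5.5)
Your proof is correct, and it establishes exactly what the paper's proof establishes. The mechanism is the same in both: the pinched data are an average of orthogonal conjugates of the Hessian, and a superlevel set of the phase at a level $\ge (n-2)\pi/2$ is convex.

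The packaging differs:
\begin{itemize}
\item \emph{The paper} first rotates so the tangential block is diagonal and invokes Schur--Horn. Only Schur's half is used: the diagonal of a symmetric matrix lies in the convex hull of the permutations of its eigenvalue vector. It then applies convexity and permutation symmetry of $\{\lambda:\sum_i\arctan\lambda_i\ge\Theta_t\}$ directly in $\mathbb{R}^n$, i.e.\ Lemma \ref{lem:yuan}(iii) or Lemma \ref{lem:convex} exactly as stated.
\item \emph{You} write the block-diagonal part as the midpoint $\frac12(M+SMS)$ of two conjugate matrices, so no majorization theorem is needed at that step. The price is that you need convexity of the matrix set $\{N\in\mathrm{Sym}(n):F(N)\ge s\}$. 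Deriving this from the eigenvalue statement is itself a Schur/Ky~Fan-type majorization fact. However, the paper already takes it for granted in Lemma \ref{max-sub}, so within this paper your route is the shorter one. It also makes transparent that the phase constraint enters only through convexity of one superlevel set, at level $s=F(M)$.
\end{itemize}

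On the $t$-shift, your caution is justified, and the paper's proof has the same issue unstated. Its claims that $(\lambda_1,\dots,\lambda_n)\in\Gamma$ and $\Theta_t=\sum_i\arctan\lambda_i$ hold only for the eigenvalues of $D^2u^t+tI$. So it too proves only the shifted inequality.

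You should drop both of your fallbacks. Letting $t\to0$ says nothing at a fixed $t$. The $O(t)$ comparison leaves an additive error of order $nt$, not an exact inequality.

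In fact, no argument can derive the unshifted form from the equation alone. Take $n=3$:
\begin{itemize}
\item Let the tangential--normal $2\times2$ block of $D^2u^t$ be the $45^\circ$ rotation of $\mathrm{diag}(-t,(k-1)t)$, with $C_0<k<2$. This is possible because $C_0<2$ when $n=3$: by Lemma \ref{lem:yuan}(i), $\underline\lambda_1,\underline\lambda_2>0$.
\item Complete it with a large tangential eigenvalue chosen so that $F(D^2u^t+tI)=\Theta_t$.
\end{itemize}
A Taylor expansion shows that the pinched phase $\sum_\alpha\arctan\lambda'_\alpha+\arctan u^t_{nn}$ is smaller than $F(D^2u^t)$ by $\frac{t^3}{4}k^2(2-k)+O(t^5)>0$. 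Such a Hessian is compatible with the equation but violates the unshifted inequality.

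The lemma should therefore be stated, and used in the normal estimate, for $D^2u^t+tI$, i.e.\ for $u^t+\frac t2|x|^2$.
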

	
	\begin{proof}
		Fix \(x\in\partial\Omega\). Using an orthogonal transformation on the tangential directions, we may diagonalize the \((n-1)\times(n-1)\) upper block of \(D^2u^t(x)\), yielding
$$
		\begin{pmatrix}
			\lambda_1' & & & * \\
			& \ddots & & \vdots \\
			& & \lambda_{n-1}' & * \\
			* & \cdots & * & u_{nn}
		\end{pmatrix}.
$$
		By the Schur-Horn theorem, the diagonal vector
$$
		(\lambda_1',\ldots,\lambda_{n-1}',u_{nn})
$$
		is in the convex hull of the vectors that are permutations of the eigenvalues
$$
		(\lambda_1,\ldots,\lambda_n)
$$
		of \(D^2u^t(x)\). Since the level set
$$
		\Gamma := \left\{ \mu \in \mathbb{R}^n : \sum_{i=1}^n \arctan \mu_i \ge \Theta_t \right\}
$$
		is convex for \(\Theta_t = (n-2)\pi/2 + C_0 t \ge (n-2)\pi/2\) (Lemma \ref{lem:yuan}), any convex combination of permutations of \((\lambda_1,\ldots,\lambda_n)\) lies in \(\Gamma\). Therefore,
$$
		(\lambda_1',\ldots,\lambda_{n-1}',u^t_{nn}) \in \Gamma.
$$
		Thus
$$
		\sum_{\alpha=1}^{n-1} \arctan \lambda'_\alpha + \arctan u^t_{nn}
		\ge \Theta_t
		= \sum_{i=1}^n \arctan \lambda_i.
$$
		This proves the lemma.
	\end{proof}
	
	\begin{lemma}\label{lem:unn_tau}
		For any \(0<t<1\), if \(u^t_{nn}(x_0)\) is sufficiently large, then there exists \(\tau>0\), independent of \(t\), such that
$$
		\tilde F_0^{\alpha\beta}\left(\underline u_{\alpha\beta}(x_0)-u^t_{\alpha\beta}(x_0)\right) \ge \tau.
$$
	\end{lemma}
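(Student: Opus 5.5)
Throughout, $x_0$ is the boundary minimum point of $\sum_{\alpha}\arctan\lambda'_\alpha$ and $\tilde F$ is the reduced operator on $(n-1)\times(n-1)$ matrices fixed above. The plan is to combine three ingredients: concavity of the level set of $\tilde F$, a gap between the reduced phase of $\underline u$ and that of $u^t$ at $x_0$, and uniform bounds for the matrices involved.

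\textbf{Step 1: the reduced phase of $u^t$ at $x_0$ is nearly critical.} By Lemma \ref{lem:unn_relation} and the equation,
$$
\sum_{\alpha=1}^{n-1}\arctan\lambda'_\alpha(u^t_{\alpha\beta}(x_0)) \ge \Theta_t-\arctan u^t_{nn}(x_0).
$$
Suppose $u^t_{nn}(x_0)$ is large, say larger than $R$. Then $\arctan u^t_{nn}(x_0)$ is close to $\pi/2$, so the reduced phase $\sum_\alpha\arctan\lambda'_\alpha(u^t_{\alpha\beta}(x_0))$ is at least about $(n-3)\pi/2$. Hence $\tilde F(u^t_{\alpha\beta}(x_0))$ is close to $(n-2)\pi/2$.

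\textbf{Step 2: the subsolution has a uniform gap.} The tangential Hessians agree on $\partial\Omega$ up to a bounded curvature term,
$$
u^t_{\alpha\beta}=\underline u_{\alpha\beta}-(u^t-\underline u)_n\rho_{\alpha\beta}.
$$
Apply Lemma \ref{lem:subsolution} to $\underline u$, together with the Cauchy interlacing inequality $\lambda'_\alpha(\underline u_{\alpha\beta})\ge\underline\lambda_{\alpha+1}$. This gives
$$
\sum_{\alpha=1}^{n-1}\arctan\lambda'_\alpha(\underline u_{\alpha\beta})\ge\sum_{i\ge2}\arctan\underline\lambda_i>A_0>(n-3)\tfrac{\pi}{2}.
$$
Now choose $R$ so large that $\pi/2-\arctan R<\tfrac12\bigl(A_0-(n-3)\pi/2\bigr)$. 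Then, at least in the borderline regime where the reduced phase of $u^t$ is close to $(n-3)\pi/2$, we get
$$
\tilde F(\underline u_{\alpha\beta}(x_0))\ge\tilde F(u^t_{\alpha\beta}(x_0))+\sigma
$$
with a uniform $\sigma>0$. The other regime is handled differently: if the reduced phase of $u^t$ is already large, then by the minimality of $x_0$ it is large at every boundary point, and I would argue via a barrier at $x_0$ instead. I expect some of this case analysis to be absorbed into the subsequent lemma of the paper. In both regimes, the level $\tilde F\ge \tilde F(u^t_{\alpha\beta}(x_0))$ lies at or above $(n-2)\pi/2$ once $\arctan R$ is added. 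Therefore the reduced level set is convex by Lemma \ref{lem:yuan}(iii) applied in dimension $n$ with the last entry $R$ fixed.

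\textbf{Step 3: convert the gap into a linear inequality.} The superlevel set of $\tilde F$ is convex, so its supporting hyperplane at $u^t_{\alpha\beta}(x_0)$ has normal $\tilde F_0^{\alpha\beta}$, and $\underline u_{\alpha\beta}(x_0)$ lies strictly inside that set. Since $\tilde F_0^{\alpha\beta}(M-u^t_{\alpha\beta}(x_0))\ge0$ for every $M$ in the set, I apply this to the point $M=\underline u_{\alpha\beta}(x_0)-sI$ for a small $s>0$. This point still lies in the set because of the gap $\sigma$ and the uniform bound on $|\underline u_{\alpha\beta}|$. The result is
$$
\tilde F_0^{\alpha\beta}(\underline u-u^t)_{\alpha\beta}(x_0)\ge s\operatorname{tr}\tilde F_0.
$$

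\textbf{Main obstacle.} The hard point is making $\operatorname{tr}\tilde F_0=\sum_\alpha 1/(1+\lambda'^2_\alpha)$ uniformly positive. This needs the tangential eigenvalues $\lambda'_\alpha(u^t(x_0))$ to be uniformly bounded, which I would get from $|u^t_{\alpha\beta}|\le C$ (established above). The same bound controls how much the $\arctan$ terms can move, so $s$, and hence $\tau=s\operatorname{tr}\tilde F_0$, depend only on $C$, $\sigma$ and $\underline u$, independent of $t$.
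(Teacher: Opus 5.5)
\textbf{There is a genuine gap in Step 2.} To get $\tilde F(\underline u_{\alpha\beta}(x_0))\ge\tilde F(u^t_{\alpha\beta}(x_0))+\sigma$ you need an \emph{upper} bound on $\sum_\alpha\arctan\lambda'_\alpha(u^t_{\alpha\beta}(x_0))$. Lemma \ref{lem:unn_relation} supplies only the lower bound $\Theta_t-\arctan u^t_{nn}(x_0)$, so your claim in Step 1 that $\tilde F(u^t_{\alpha\beta}(x_0))$ is close to $(n-2)\pi/2$ does not follow.

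You then split into a borderline regime and an ``other regime'', and in the latter you prove nothing. Deferring that case to minimality of $x_0$ plus a barrier is circular: the paper's barrier argument bounding $u^t_{nn}(x_0)$ uses this very lemma, through $\eta\ge\tau/C$. Interlacing does not help on the $u^t$ side either. It gives $\lambda'_\alpha\le\lambda_\alpha$, hence only $\sum_\alpha\arctan\lambda'_\alpha\le\Theta_t-\arctan\lambda_{\min}(D^2u^t)$, which is useless because $\lambda_{\min}$ may be negative.

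The missing idea, which is exactly what the paper uses, is the Caffarelli--Nirenberg--Spruck eigenvalue lemma. At $x_0$ you have $|u^t_{\alpha\beta}|\le C$ from the tangential estimate and $|u^t_{\alpha n}|\le C$ from Lemma \ref{lem:mixed_boundary}. So as $u^t_{nn}(x_0)\to\infty$, the eigenvalues of $D^2u^t(x_0)$ are $\lambda'_\alpha+o(1)$ and $u^t_{nn}(x_0)+O(1)$, uniformly in $t$. The equation then gives
\[
\sum_{\alpha=1}^{n-1}\arctan\lambda'_\alpha(u^t_{\alpha\beta}(x_0))\le\Theta_t-\arctan u^t_{nn}(x_0)+o(1)=(n-3)\tfrac{\pi}{2}+C_0t+o(1).
\]
Hence your ``other regime'' is empty once $u^t_{nn}(x_0)$ is large. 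Combined with your interlacing bound $\sum_\alpha\arctan\lambda'_\alpha(\underline u_{\alpha\beta}(x_0))>A_0$, which is a valid substitute for the paper's application of Lemma \ref{lem:unn_relation} to $\underline u$, this yields a uniform $\sigma>0$. You need $C_0t$ small relative to $A_0-(n-3)\pi/2$, but that is harmless, since Lemma \ref{lem:mixed_boundary} already restricts to $t<t_0$.

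With this inserted, the rest of your plan works and is more careful than the paper's bare appeal to strict convexity:
\begin{itemize}
\item The reduced superlevel set is convex by Lemma \ref{lem:yuan} applied in dimension $n-1$, because its level exceeds $(n-3)\pi/2$. Your ``dimension $n$ with last entry $R$'' justification would need $\tilde F(u^t_{\alpha\beta}(x_0))\ge(n-2)\pi/2$, which is not guaranteed when $u^t_{nn}(x_0)>R$.
\item The shift $\underline u_{\alpha\beta}(x_0)-sI$ with $s=\sigma/(n-1)$, together with $|u^t_{\alpha\beta}(x_0)|\le C$, gives an explicit $t$-independent $\tau=s\sum_\alpha\bigl(1+(\lambda'_\alpha)^2\bigr)^{-1}$.
\end{itemize}
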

	
	\begin{proof}
		Applying Lemma \ref{lem:unn_relation} to the subsolution \(\underline u\), we have
		$$
		\sum_{\alpha=1}^{n-1} \arctan \underline\lambda'_\alpha
		\ge
		\sum_{i=1}^n \arctan \underline\lambda_i
		-
		\arctan \underline u_{nn}.
		$$
		
		By the Caffarelli-Nirenberg-Spruck lemma, as \(u^t_{nn}\to\infty\),
	$$
		\lambda_\alpha = \lambda'_\alpha + o(1) \quad (1\le \alpha \le n-1), \qquad
		\lambda_n = u^t_{nn} + O(1).
		$$
		Hence, for any \(\gamma>0\), if \(u^t_{nn}\) is sufficiently large,
	$$
		\sum_{\alpha=1}^{n-1} \arctan \lambda'_\alpha
		\le
		\sum_{\alpha=1}^{n-1} \arctan \lambda_\alpha
		+ \frac{\gamma}{2}.
		$$
		Therefore,
		\begin{align*}
			\sum_{\alpha=1}^{n-1} \arctan \underline\lambda'_\alpha
			-
			\sum_{\alpha=1}^{n-1} \arctan \lambda'_\alpha
			&\ge
			\sum_{i=1}^n \arctan \underline\lambda_i
			-
			\arctan \underline u_{nn}
			-
			\sum_{\alpha=1}^{n-1} \arctan \lambda_\alpha
			-
			\frac{\gamma}{2} \\
			&\ge
			\Theta_t
			-
			\arctan \underline u_{nn}
			-
			\left(\Theta_t - \arctan \lambda_n\right)
			-
			\frac{\gamma}{2} \\
			&\ge
			\arctan u^t_{nn}
			-
			\arctan \underline u_{nn}
			-
			\gamma.
		\end{align*}
		Since \(u^t_{nn}\) is large, there exists \(c>0\), independent of \(t\), such that
		$$
		\sum_{\alpha=1}^{n-1} \arctan \underline\lambda'_\alpha
		\ge
		\sum_{\alpha=1}^{n-1} \arctan \lambda'_\alpha
		+ c.
	$$
		Thus \(\underline u_{\alpha\beta}(x_0)\) lies strictly inside the level set
	$$
		\Gamma := \left\{ M \in \mathrm{Sym}(n-1) : \tilde F(M) \ge \tilde F(u^t_{\alpha\beta}(x_0)) \right\},
	$$
		while \(u^t_{\alpha\beta}(x_0)\) lies on its boundary. Since \(\tilde F_0^{\alpha\beta}\) is the interior normal at \(u^t_{\alpha\beta}(x_0)\), and the level set is strictly convex, we obtain
	$$
		\tilde F_0^{\alpha\beta}\left(\underline u_{\alpha\beta}(x_0)-u^t_{\alpha\beta}(x_0)\right) \ge \tau > 0.
	$$
		This completes the proof.
	\end{proof}

	Now define \(\eta := \tilde F_0^{\alpha\beta}\sigma_{\alpha\beta}(x_0)\), where \(\sigma_{\alpha\beta}\) is the second fundamental form of \(\partial\Omega\). Since
	$$
	u^t_{\alpha\beta}-\underline u_{\alpha\beta} = -(u^t-\underline u)_n \sigma_{\alpha\beta} \quad \text{on } \partial\Omega,
$$
	we have
$$
	\underline u_{\alpha\beta}(x_0)-u^t_{\alpha\beta}(x_0) = (u^t-\underline u)_n(x_0)\sigma_{\alpha\beta}(x_0).
$$
	Contracting with \(\tilde F_0^{\alpha\beta}\) gives
$$
	\tilde F_0^{\alpha\beta}\left(\underline u_{\alpha\beta}(x_0)-u^t_{\alpha\beta}(x_0)\right)
	=
	\nabla_n(u^t-\underline u)(x_0)\,\eta.
$$
	By Lemma \ref{lem:unn_tau}, \(\tau \le \nabla_n(u^t-\underline u)(x_0)\eta \le C\eta\), so
$$
	\eta \ge \frac{\tau}{C} > 0.
$$
	Define in \(\Lambda := B_r(x_0)\cap \Omega\)
$$
	\Phi := -\nabla_n(u^t-\underline u)
	+ \frac{1}{\eta}\tilde F_0^{\alpha\beta}\left(\underline u_{\alpha\beta}(x)-u^t_{\alpha\beta}(x_0)\right).
$$
	 On \(\partial\Omega\cap B_r(x_0)\),
$$
	\Phi
	= \frac{1}{\eta}\tilde F_0^{\alpha\beta}\left(u^t_{\alpha\beta}(x)-u^t_{\alpha\beta}(x_0)\right) \ge 0,
$$
	since \(x_0\) is the minimizer of \(\tilde F\) on \(\partial\Omega\).
	In the interior, differentiating \(\Phi\) and using the linearized operator \(L=\mathcal F^{ij}D_{ij}\), we have
$$
	|L\Phi| \le C\sum_i \mathcal F^{ii}.
$$
	Let \(v\) be the barrier function from Lemma \ref{lem:boundary_barrier}. Choose \(A,B>0\) independent of \(t\) such that
$$
	L(Av+B|x|^2+\Phi) \le 0 \quad \text{in } \Lambda,
$$
	and
$$
	Av+B|x|^2+\Phi \ge 0 \quad \text{on } \partial\Lambda.
$$
	By the maximum principle,
$$
	Av+B|x|^2+\Phi \ge 0 \quad \text{in } \Lambda.
$$
	Since equality holds at \(x=0\),
$$
	\partial_n(Av+B|x|^2+\Phi)(0) \ge 0.
$$
	Using the definition of \(v\), this gives
$$
	\Phi_n(0) \ge -C,
$$
	and hence
$$
	u^t_{nn}(x_0) \le C. \label{eq:unn_upper_x0}
$$
	
	Now we propagate this bound to all boundary points. By Lemma \ref{lem:unn_relation} applied at \(x_0\),
	$$
	\sum_{\alpha=1}^{n-1} \arctan \lambda'_\alpha(x_0)
	\ge \Theta_t - \arctan u^t_{nn}(x_0)
	\ge \Theta_t - \arctan C.
	$$
	Since \(x_0\) is a minimum point of \(\sum_{\alpha=1}^{n-1} \arctan \lambda'_\alpha\) on \(\partial\Omega\), for any \(x\in\partial\Omega\),
	$$
	\sum_{\alpha=1}^{n-1} \arctan \lambda'_\alpha(x)
	\ge
	\sum_{\alpha=1}^{n-1} \arctan \lambda'_\alpha(x_0)
	\ge \Theta_t - \arctan C.
	$$
	Adding \(\arctan R\) and taking \(R\) sufficiently large such that \(\arctan R - \arctan C \ge c_0 > 0\), we obtain
	\begin{equation}
		\sum_{\alpha=1}^{n-1} \arctan \lambda'_\alpha(x) + \arctan R
		\ge \Theta_t + c_0 \quad \text{for all } x\in\partial\Omega. \label{eq:prop4}
	\end{equation}
	
	Finally, suppose there exists \(y\in\partial\Omega\) such that \(u^t_{nn}(y) > R_1\) for some large \(R_1 \ge R\). then
	$$
	\sum_{i=1}^n \arctan \lambda_i(y)
	\ge
	\sum_{\alpha=1}^{n-1} \arctan \lambda'_\alpha(y) + \arctan u^t_{nn}(y) - \frac{c_0}{2}.
	$$
	
	Using \eqref{eq:prop4} with \(R=R_1\) and \(u^t_{nn}(y) > R_1\),
	
	$$\sum_{\alpha=1}^{n-1} \arctan \lambda'_\alpha(y) + \arctan u^t_{nn}(y)
	\ge \Theta_t + c_0.
	$$
	Thus
$$
	\sum_{i=1}^n \arctan \lambda_i(y) \ge \Theta_t + \frac{c_0}{2},
	$$
	
	which contradicts the fact that \(u^t\) solves the approximating equation with phase \(\Theta_t\). Therefore,
	
	$$
	u^t_{nn}(x) \le R_1 \quad \text{for all } x\in\partial\Omega.
	$$
	
	Combining with the lower bound \eqref{eq:unn_lower}, we obtain
	\begin{equation}
		|u^t_{nn}|_{L^\infty(\partial\Omega)} \le C. \label{eq:unn_boundary}
	\end{equation}
	Combining \eqref{eq:unn_boundary} with the tangential estimate and the mixed estimate (Lemma \ref{lem:mixed_boundary}), we have
	\begin{equation}
		|D^2u^t|_{L^\infty(\partial\Omega)} \le C. \label{eq:hessian_boundary}
	\end{equation}
	Finally, by Theorem \ref{prop:interior_hessian},
$$
	\sup_{\Omega} |D^2u^t| \le C\left(1 + \max_{\partial\Omega} |D^2u^t|\right) \le C.
$$
	Combining this with the \(C^0\) estimate (Theorem \ref{thm:C0}) and the gradient estimate (Theorem \ref{thm:gradient}), we obtain the uniform \(C^2\) estimate
$$
	\|u^t\|_{C^2(\overline{\Omega})} \le C,
$$
	where \(C\) is independent of \(t\).

\section{The annular problem}

\begin{lemma}\label{lem:inner_boundary}
	Let $u_S$ be the solution of the annular problem \eqref{bounded}. 
	There exists a constant $C>0$, independent of $S$, such that
	\begin{equation}
		|Du_S| + |D^2u_S| \le C \quad \text{on } \partial \Omega.
	\end{equation}
\end{lemma}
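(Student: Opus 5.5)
The estimate is local to the fixed inner boundary $\partial\Omega$, so the plan is to trap $u_S$ between two barriers that are independent of $S$ and agree with $\varphi$ on $\partial\Omega$. Then we rerun the boundary $C^2$ argument of Section \ref{sectionbp} with these barriers. For the lower barrier we use the smooth global subsolution $\underline u_c$ of \eqref{eq:underline-u}. It satisfies $\underline u_c=\varphi$ on $\partial\Omega$, $F(D^2\underline u_c)\ge\Theta$, and $\underline u_c=u_\infty^c$ on $\partial E_S$ for $S>2R_c$. By Lemma \ref{lem:comparison} on $\Omega_S$, we get $u_S\ge\underline u_c$. The important point is that $\underline u_c$ does not depend on $S$, and near $\partial\Omega$ it equals $v_0$ from Lemma \ref{lem:local-subsolution}.

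For the upper barrier, Lemma \ref{lem:yuan}(iv) gives $\Delta u_S\ge0$ when $\Theta\ge(n-2)\pi/2$; in the supercritical case one could alternatively compare with $Q_c$. We want a superharmonic-type barrier $h$ with $h=\varphi$ on $\partial\Omega$ and $h\ge u_S$ on $\partial E_S$. The natural choice is the harmonic function $h$ on $\mathbb R^n\setminus\overline\Omega$ with $h=\varphi$ on $\partial\Omega$, plus a large multiple of $\Phi_0:=Q_c$ minus its harmonic extension. More simply, we take $h:=H+Q_c$, where $H$ is the bounded harmonic function in $\mathbb R^n\setminus\overline\Omega$ with $H=\varphi-Q_c$ on $\partial\Omega$ and $H\to0$ at infinity. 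Then $\Delta h=\operatorname{tr}A$. To be safe we instead use $u_S\le Q_c+(\text{harmonic correction})$, obtained from $\Delta(u_S-Q_c)\ge-\operatorname{tr}A$ when $\operatorname{tr}A>0$; if this sign fails, we use comparison with the supersolution $Q_c+H$ restricted appropriately. We also use $u_S\le u_\infty^+$-type bounds on $\partial E_S$. Both barriers are smooth up to $\partial\Omega$ with $S$-independent $C^2$ norms, so $\partial_\nu\underline u_c\le\partial_\nu u_S\le\partial_\nu h$ on $\partial\Omega$. Combined with the tangential derivative, which equals that of $\varphi$, this gives $|Du_S|\le C$ on $\partial\Omega$.

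For the second derivatives on $\partial\Omega$ we localize the arguments of Section \ref{sectionbp} to a collar $\Omega_{\rho_0}=\{x\in\Omega_S:d(x)<\rho_0\}$ of fixed width, with $\rho_0$ smaller than the distance from $\partial\Omega$ to $\partial E_{r_1}$. The tangential bound $|D_{\alpha\beta}u_S|\le C$ follows from the identity $u_{\alpha\beta}=\underline u_{\alpha\beta}-(u-\underline u)_n\rho_{\alpha\beta}$ together with the gradient bound. For the mixed derivatives we repeat Lemma \ref{lem:mixed_boundary} with the barrier $v$ of Lemma \ref{lem:boundary_barrier}, built from $u_S-\underline u_c$. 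The lemmas still apply because we have the dichotomy from Lemma \ref{lem:linearized_dichotomy}, which uses only that $\underline u_c$ is a subsolution with bounded Hessian on the collar. In the critical case we use the same $t$-perturbation, and in the supercritical case we use strictness of the subsolution directly. The maximum principle is applied on $\Omega_{\rho_0}\cap B_\mu$. Its boundary data on the inner face $\{d=\rho_0\}$ is controlled only by $|u_S|,|Du_S|$ there, and these are bounded independently of $S$. The $C^0$ bound comes from the two barriers. The interior gradient bound on the collar is the step I expect to be the main obstacle. The cleanest route is to use an interior gradient estimate for special Lagrangian equations in the critical and supercritical phase, such as those of Warren--Yuan, on unit balls where $u_S$ is $C^0$-bounded. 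Alternatively, one could use the convexity-type bounds from Lemma \ref{lem:yuan}(ii), which yield semiconvexity-like control. Finally, the double normal estimate proceeds exactly as in the boundary normal subsection: take the minimum point of $\sum\arctan\lambda'_\alpha$ on $\partial\Omega$ only (not on $\partial E_S$), apply Lemma \ref{lem:unn_tau}, and use the barrier $Av+B|x|^2+\Phi$ in $B_r(x_0)\cap\Omega_S$. All the constants depend only on $\Omega,\varphi,\underline u_c$ near $\partial\Omega$ and the $S$-independent bounds above.
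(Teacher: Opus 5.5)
Your approach is the paper's. Its proof of this lemma only observes that $\underline u_c=v_0$ near $\partial\Omega$ and asserts that the boundary estimates of Section~\ref{sectionbp} carry over. Your upper barrier for the boundary gradient, however, fails as written.

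\textbf{The upper barrier.} Your two explicit candidates, $h_0+K(Q_c-\tilde Q_c)$ and $H+Q_c$, satisfy $\Delta h=K\operatorname{tr}A$ and $\Delta h=\operatorname{tr}A$ respectively. Since $\operatorname{tr}A>0$ for $\Theta\ge(n-2)\pi/2$, $h$ is subharmonic, and $\Delta u_S\ge0$ then gives no comparison $u_S\le h$. Nor is $Q_c+H$ a supersolution of the special Lagrangian equation: $D^2(Q_c+H)=A+D^2H$, and already the linearization $\operatorname{tr}(G\,D^2H)$ has no sign.

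More fundamentally, take any superharmonic $h$ on $\Omega_S$ with $h=\varphi$ on $\partial\Omega$ and $h\ge u_\infty^c$ on $\partial E_S$. It lies above the harmonic function with these data, whose normal derivative on $\partial\Omega$ is of order $S^2$. So no global barrier can be $S$-independent. This is also why the harmonic barrier $w$ of Section~\ref{sectionbp} cannot simply be transplanted, as the paper's proof implicitly does.

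The barrier must be localized, and you already have the ingredients.
\begin{itemize}
\item Comparison with the exact solution $Q_c$, using $\varphi\le Q_c$ on $\partial\Omega$ and $u_\infty^c\le Q_c$ on $\partial E_S$, gives $u_S\le Q_c\le M_0$ on a fixed collar $N:=\{x\in\Omega_S:\operatorname{dist}(x,\partial\Omega)<\rho_0\}$.
\item Let $h$ be harmonic in $N$ with $h=\varphi$ on $\partial\Omega$ and $h=M_0$ on $\{\operatorname{dist}(x,\partial\Omega)=\rho_0\}$.
\item Then $\Delta u_S\ge0$ yields $u_S\le h$ in $N$, with $h$ independent of $S$ and smooth up to $\partial\Omega$. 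The sandwich $\underline u_c\le u_S\le h$ then bounds $\partial_\nu u_S$.
\end{itemize}

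\textbf{The gradient bound on the collar.} You are right that an $S$-independent gradient bound on all of $N$ is needed, and the paper's proof passes over this. The bound enters the boundary values, on $\partial B_\mu\cap\Omega_S$, of the barriers used for the mixed and double normal estimates. Section~\ref{sectionbp} does not supply it: the maximum principle for $|Du_S|^2$ on $\Omega_S$ only gives $\sup|Du_S|\le CS$, because of Lemma~\ref{lem:outer_boundary}.

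Your interior-estimate route works once combined with $L|Du_S|^2\ge0$ on $N$. Interior gradient estimates on balls of radius $\rho_0/2$, with oscillation controlled by $\underline u_c\le u_S\le Q_c$, bound $|Du_S|$ on $\{\operatorname{dist}(x,\partial\Omega)=\rho_0\}$. The boundary gradient bound handles $\partial\Omega$.

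A self-contained alternative uses the fact that $D_e(u_S-Q_c)$ is $L$-harmonic. On $\partial E_S$ one has the sandwich $u_\infty^c\le u_S\le Q_c-B_cS^{2-n}+\delta_cS^{-n}$. Both bounds agree with $u_S$ on $\partial E_S$, and the upper one is an exact solution lying above $\varphi\le Q_c-2\mu$ on $\partial\Omega$ for large $S$. The sandwich forces $|D(u_S-Q_c)|=O(S^{1-n})$ on $\partial E_S$. The maximum principle then gives $|Du_S|\le C+|A||x|$ in all of $\Omega_S$, with $C$ independent of $S$.

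With these two repairs, your tangential, mixed and double normal steps go through as you describe. This includes taking the minimum of $\sum\arctan\lambda'_\alpha$ over $\partial\Omega$ only.
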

\begin{proof}
	The argument is local near $\partial \Omega$ and is independent of the outer radius $S$. In a fixed bounded neighborhood of $\partial \Omega$, the global smooth subsolution $\underline u_c$ agrees with the near-boundary subsolution $v_0$ (Lemma \ref{lem:local-subsolution}). Therefore, the boundary gradient estimate and the tangential, mixed, and normal second derivative estimates are precisely those established in Section \ref{sectionbp} and \cite{CPW17}.
\end{proof}

\begin{lemma}\label{lem:outer_boundary}
	Let $u_S$ be the solution of the annular problem \eqref{bounded}. 
	There exists a constant $C>0$, independent of $S$, such that
	\begin{equation}
		|D^2u_S| \le C \quad \text{on } \partial E_S, 
	\end{equation}
	and
	\begin{equation}
		|Du_S| \le C S \quad \text{on } \partial E_S. 
	\end{equation}
\end{lemma}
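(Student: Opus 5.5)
Following the approach indicated before Section \ref{sectionbp} (cf.\ \cite{BJ26} Lemma 5.4, \cite{LX26} Section 3.3.2), I will rescale so that the outer boundary becomes fixed, and then apply the bounded-domain boundary estimates on the rescaled problem. Set
$$
\tilde u(y) := S^{-2} u_S(Sy), \qquad y \in E_1\setminus S^{-1}\overline\Omega .
$$
Then $D^2_y\tilde u(y) = D^2u_S(Sy)$, so $\tilde u$ solves the same special Lagrangian equation with phase $\Theta$. Its boundary data on $\partial E_1$ is $\tilde u_\infty(y) := S^{-2}u_\infty^c(Sy) = Q_0(y) + cS^{-2} - B_cS^{-n}|y|_\rho^{2-n} + \delta_c S^{-n-2}|y|_\rho^{-n}$, where $|y|_\rho=\rho(y)$. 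Since $S>2R_c$, this data is bounded in $C^k(\partial E_1)$ uniformly in $S$. The same scaling applied to $\underline u_c$ and to $Q_c$ gives a lower barrier $S^{-2}\underline u_c(Sy)$ and an upper barrier $S^{-2}Q_c(Sy)$. Both coincide with $\tilde u$ on $\partial E_1$ up to $O(S^{-n})$ in $C^2$, and $\underline u_c\le u_S\le Q_c+CS^{2-n}$ by comparison (Lemma \ref{lem:comparison}).

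\textbf{Outer gradient bound.} This comes from the barriers: the scaled barriers have uniformly bounded gradients near $\partial E_1$, so $|D_y\tilde u|\le C$ on $\partial E_1$. Since $D_y\tilde u(y)=S^{-1}Du_S(Sy)$, this gives $|Du_S|\le CS$ on $\partial E_S$. To get the normal derivative sandwich I would use the upper barrier $Q_c + B_c\rho^{2-n}-\delta_c\rho^{-n}$ (Lemma \ref{lem:farfield}), which matches $u_\infty^c$ up to a controlled amount on $\partial E_S$. Alternatively I would use $Q_c + \kappa(S^{2-n}\cdots)$, possibly corrected by a small multiple of a harmonic function of $L_A$ vanishing on $\partial E_S$.

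\textbf{Outer second-order bound.} For $|D^2_y\tilde u|\le C$ on $\partial E_1$, I will rerun the boundary $C^2$ argument of Section \ref{sectionbp} (tangential, mixed, double-normal), or that of \cite{CPW17} in the supercritical case, near $\partial E_1$ only. The ingredients are as follows. First, $\partial E_1$ is a fixed smooth strictly convex ellipsoid. Second, the data $\tilde u_\infty$ is uniformly smooth. Third, near $\partial E_1$ we have a uniform strict smooth subsolution: the scaled far-field function $S^{-2}u^c_\infty(Sy)$, whose Hessian is $A+O(S^{-n}R_c^{n})$. Its strictness margin is only of order $\delta_c\rho^{-n-2}$, so I would instead take $\tilde u_\infty + \epsilon(\rho^2-1)$ with a fixed small $\epsilon$. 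This has Hessian $A+2\epsilon G^{-1}+o(1)$ and phase $\Theta+2n\epsilon+O(\epsilon^2)$, so it is uniformly strict. It equals the data on $\partial E_1$ and lies below $\tilde u$ near $\partial E_1$, which follows from comparison on a thin shell against $\tilde u$ after subtracting a term vanishing on $\partial E_1$. With these in place, the tangential estimate follows from the boundary identity with the bounded gradient, and the mixed estimate uses the barrier of Lemma \ref{lem:boundary_barrier} built from this subsolution. The double-normal estimate uses the lower bound from Lemma \ref{lem:yuan}(iv) together with the minimum-point argument of Lemmas \ref{lem:unn_relation}--\ref{lem:unn_tau}. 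Scaling back gives $|D^2u_S|=|D^2_y\tilde u|\le C$ on $\partial E_S$.

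\textbf{Main obstacle.} The hard part is making the localized subsolution and barrier uniform in $S$. In the scaled picture the near-$\partial E_1$ subsolution must satisfy $\underline w\le \tilde u$ only in a fixed neighbourhood, which requires a local comparison on $\{1-r<\rho<1\}$. On the inner side of that shell I need $\underline w\le\tilde u$. I would arrange this by adding $-K(1-\rho)$ to the subsolution, which is convex-compatible, with $K$ large, using the uniform $C^0$ control $|\tilde u-Q_0|\le C$ there.
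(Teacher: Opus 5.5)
Your proposal is correct and follows the same route as the paper. You rescale to $S^{-2}u_S(Sy)$ so that the outer boundary becomes the fixed ellipsoid $\partial E_1$ with uniformly smooth data, run the local boundary estimates there, and scale back via $D_y^2U_S=D_x^2u_S$ and $D_yU_S=S^{-1}D_xu_S$. You also supply ingredients the paper leaves implicit. The most important is the uniformly strict local subsolution $\tilde u_\infty+\epsilon(\rho^2-1)$, which is genuinely needed because the scaled far-field subsolution has phase margin only $O(\delta_c S^{-n-2})\to 0$.

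Two small simplifications are available. First, for $S$ large the touching upper barrier can simply be the exact solution $Q_0+c-B_cS^{2-n}+\delta_cS^{-n}$, which equals $u_\infty^c$ on $\partial E_S$ because $\rho\equiv S$ there. Your first choice, $Q_c+B_c\rho^{2-n}-\delta_c\rho^{-n}$, lies strictly above the data on $\partial E_S$ and so gives no normal-derivative bound. Second, $\underline w\le\tilde u_\infty\le\tilde u$ on the collar already follows from $u_S\ge\underline u_c=u_\infty^c$ for $\rho\ge 2R_c$, so the $-K(1-\rho)$ correction is not needed.
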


\begin{proof}
	The estimate is reduced to a fixed outer boundary by scaling. Set
	\begin{equation}
		U_S(y) := S^{-2}u_S(Sy), \qquad y \in E_1 \setminus S^{-1}\overline \Omega.
	\end{equation}
	Then
	$$
	D_y U_S = S^{-1}D_x u_S, \qquad D_y^2 U_S = D_x^2 u_S,
	$$
	and $U_S$ satisfies the same Hessian equation in $E_1 \setminus S^{-1}\Omega$. 
	For $S$ sufficiently large, the scaled domain $S^{-1}\Omega$ lies strictly inside $E_1$. 
	Hence a collar neighborhood of the outer boundary $\partial E_1$, with width 
	independent of $S$, is contained in $E_1 \setminus S^{-1}\Omega$. The boundary 
	estimates near $\partial E_1$ therefore depend only on the fixed local geometry 
	of $\partial E_1$ and the scaled boundary data, not on the inner boundary.
	
	Near $\partial E_1$, the boundary value is
	$$
	g_S(y) := S^{-2}u_\infty^c(Sy)
	= \frac{1}{2}y^T A y + S^{-1}b \cdot y + S^{-2}c
	- B_c S^{-n} \rho(y)^{2-n} + \delta_c S^{-n-2} \rho(y)^{-n}.
	$$
	For $S$ sufficiently large, $g_S$ converges uniformly in $C^2$ to 
	$\frac{1}{2}y^T A y$ on $\partial E_1$. Therefore the boundary estimates 
	for $U_S$ are uniform in $S$, and hence
	$$
	|D_y^2 U_S| \le C \quad \text{on } \partial E_1,
	$$
	which gives $|D^2u_S| \le C$ on $\partial E_S$. 
	
	For the gradient estimate, since $D_y U_S = S^{-1}D_x u_S$, we obtain
$$
	|Du_S| = S|D_y U_S| \le C S \quad \text{on } \partial E_S.
$$
	This completes the proof.
\end{proof}

The inner boundary estimates (Lemma \ref{lem:inner_boundary}) and the outer boundary estimates (Lemma \ref{lem:outer_boundary}) give
$$
|D^2u_S| \le C \quad \text{on } \partial \Omega_S.
$$
Combining this with the interior Hessian estimate (similar to Theorem \ref{prop:interior_hessian}), we obtain the uniform global second derivative bound.

\begin{proposition}\label{prop:uniform_C2_annular}
	There exists a constant $C>0$, independent of $S$, such that
	\begin{equation}
		|D^2u_S| \le C \quad \text{in } \Omega_S. 
	\end{equation}
\end{proposition}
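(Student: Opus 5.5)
The plan is to reduce the global bound to the boundary bounds of Lemmas \ref{lem:inner_boundary} and \ref{lem:outer_boundary}, using a maximum principle argument modelled on Theorem \ref{prop:interior_hessian}. The key point is that the interior argument in Theorem \ref{prop:interior_hessian} never uses the size of the domain. It uses only three ingredients:
\begin{itemize}
\item[(a)] the concavity of $Q=-e^{-kF}$ on the relevant level set;
\item[(b)] the maximum principle for the pure second derivatives $D_{kk}u_S$, or more generally $D_{ee}u_S$ for any unit vector $e$;
\item[(c)] the lower bound $\Delta u_S\ge 0$ from Lemma \ref{lem:yuan}(iv).
\end{itemize}
These remain valid on the annulus $\Omega_S$ with constants independent of $S$.

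\textbf{Step 1 (upper bound on pure second derivatives).} Fix a unit vector $e$. First I need $u_S$ smooth enough to differentiate the equation. For $\Theta>(n-2)\pi/2$ this comes from \cite{CPW17}. In the critical case it comes from Theorem \ref{thm:main}, or by working with the approximations $u_S^t$ and passing to the limit. In either case the subsolution $\underline u_c$ of Section 4 restricted to $\Omega_S$ supplies the required subsolution. Differentiating $Q(D^2u_S)=\mathrm{const}$ twice in the direction $e$ gives
\[
Q^{ij}\partial_{ij}(D_{ee}u_S)=-Q^{ij,rs}u_{eij}u_{ers}\ge0 .
\]
The matrix $(Q^{ij})$ is positive definite, so the weak maximum principle on the bounded domain $\Omega_S$ yields
\[
D_{ee}u_S\le\max_{\partial\Omega_S}D_{ee}u_S\le\max_{\partial\Omega_S}|D^2u_S|\le C .
\]
The last inequality combines Lemma \ref{lem:inner_boundary} on $\partial\Omega$ with Lemma \ref{lem:outer_boundary} on $\partial E_S$, and $C$ is independent of $S$. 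Taking $e$ to be an eigenvector of $D^2u_S(x_0)$ at an arbitrary point $x_0$ then gives $\lambda_{\max}(D^2u_S)\le C$ throughout $\Omega_S$.

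\textbf{Step 2 (lower bound on eigenvalues).} Since $F(D^2u_S)=\Theta\ge(n-2)\pi/2$, Lemma \ref{lem:yuan}(iv) gives $\sum_i\lambda_i\ge0$. Combined with Step 1 this yields
\[
\lambda_{\min}\ge-(n-1)C .
\]
Alternatively, Lemma \ref{lem:yuan}(i) gives $\lambda_n\ge-\lambda_{n-1}\ge-C$ directly. Hence $|D^2u_S|\le C$ in $\Omega_S$, with the Frobenius norm controlled by the eigenvalues as before.

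\textbf{Main obstacle.} The hard step is the uniformity in $S$ of the boundary input, since everything else is structural. For the outer boundary, Lemma \ref{lem:outer_boundary} rests on the scaled problem on $E_1\setminus S^{-1}\overline\Omega$. There I must make sure the boundary estimates for $U_S$ near $\partial E_1$ are genuinely uniform. This needs a subsolution for the scaled problem near $\partial E_1$ with uniformly controlled $C^2$ norm, and the natural candidate is $S^{-2}\underline u_c(Sy)$, which near $\partial E_1$ equals the rescaled far-field barrier and converges in $C^2$ to $\tfrac12 y^TAy$. I also need a uniform bound on $|D_yU_S|$ there, which follows from the sandwich $\underline u_c\le u_S\le Q_c$ (by comparison) after rescaling.

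A further subtlety is the critical case. There the concavity in (a) is only available for the approximating operator with phase $\Theta_t$. I would therefore run Steps 1--2 for $u^t_S$ with bounds uniform in both $t$ and $S$, and then let $t\to0$.
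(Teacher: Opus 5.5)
Your proposal is correct and follows essentially the same route as the paper, which combines the boundary Hessian bounds of Lemmas \ref{lem:inner_boundary} and \ref{lem:outer_boundary} with the maximum-principle argument of Theorem \ref{prop:interior_hessian}: an upper bound on pure second derivatives, then $\sum_i\lambda_i\ge0$ for the lower bound. One simplification: you do not need the detour through $u^t_S$ in the critical case, because $F^{ij}\partial_{ij}(D_eu_S)=0$ makes $D^2(D_eu_S)$ tangent to the level set, so the level-set convexity of Lemma \ref{lem:convex} (valid also at $\Theta=(n-2)\pi/2$) already gives $F^{ij,rs}u_{eij}u_{ers}\le0$.
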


\begin{proposition}\label{prop:large_constant_exterior}
	Under the hypotheses of Theorem \ref{thm:smooth}, there exists a constant $c_0 \in \mathbb{R}$ such that for every $c > c_0$, the exterior Dirichlet problem \eqref{eqsle} admits a unique smooth solution $u \in C^\infty(\overline{\Omega})$ satisfying the quadratic asymptotic \eqref{2-n}.
\end{proposition}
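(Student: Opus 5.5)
The plan is the standard annular exhaustion: solve the bounded problems \eqref{bounded} on $\Omega_S$, obtain estimates independent of $S$, pass to the limit $S\to\infty$, and identify the asymptotics using barriers.

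\textbf{Step 1 (solvability on $\Omega_S$).} Fix $c>c_0$, with $c_0$ at least as large as required in Lemma \ref{far-gluing} and in the construction \eqref{eq:underline-u}. Then $\underline u_c$ is a smooth subsolution on $\overline{\Omega_S}$ with $\underline u_c=\varphi$ on $\partial\Omega$. On $\partial E_S$ with $S>2R_c$ we have $\underline u_c=u_\infty^c$. For $\Theta>(n-2)\pi/2$, Collins--Picard--Wu \cite{CPW17} gives a unique smooth solution $u_S$ of \eqref{bounded}. For $\Theta=(n-2)\pi/2$, Theorem \ref{thm:main} gives the same conclusion.

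\textbf{Step 2 (uniform two-sided bounds).} The comparison principle (Lemma \ref{lem:comparison} on the bounded domain $\Omega_S$) gives $\underline u_c\le u_S$. For an upper barrier, I would take $\min\{Q_c,\ \overline w\}$, where $\overline w$ near $\partial\Omega$ is a supersolution attaining $\varphi$. The simplest choice is a harmonic-type function in the $L_A$ metric, $\overline w = Q_c + B\rho^{2-n}$ cut off appropriately. Alternatively, use the viscosity solution $u$ of Theorem \ref{thm:viscosity}: it satisfies $u=\varphi$ on $\partial\Omega$ and $u\ge \underline u_c$. Since $u_\infty^c\le u$ is not guaranteed, I would instead use $u_\infty^{c,+}$ from Lemma \ref{lem:farfield}. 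This yields
\[
\underline u_c\le u_S\le \overline u \quad\text{in }\Omega_S,
\qquad
\overline u-\underline u_c=O(|x|^{2-n}),
\]
uniformly in $S$. Because $\Theta\ge(n-2)\pi/2$, $\Delta u_S\ge0$ by Lemma \ref{lem:yuan}(iv). This subharmonicity also controls $u_S$ from above through harmonic comparison in the $\rho$ variables.

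\textbf{Step 3 (local estimates and limit).} Proposition \ref{prop:uniform_C2_annular} gives $|D^2u_S|\le C$ independent of $S$. The equation is concave, in the $Q=-e^{-kF}$ form of Lemma \ref{lem:linearized}, and uniformly elliptic once $D^2u$ is bounded. Evans--Krylov and Schauder bootstrapping therefore give local $C^{k,\alpha}$ bounds on compact subsets of $\mathbb{R}^n\setminus\Omega$, including up to $\partial\Omega$ via boundary Krylov, all independent of $S$. A diagonal subsequence then converges in $C^\infty_{loc}(\mathbb{R}^n\setminus\Omega)$ to a smooth solution $u$ with $u=\varphi$ on $\partial\Omega$ and $\underline u_c\le u\le\overline u$. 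Consequently
\[
u=Q_c+O(|x|^{2-n}).
\]
Uniqueness follows from Lemma \ref{lem:comparison}, since two such solutions agree on $\partial\Omega$ and their difference tends to $0$ at infinity.

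\textbf{Main obstacle.} The delicate step is the $S$-independent upper barrier in Step 2. It must simultaneously equal or exceed $\varphi$ on $\partial\Omega$, exceed $u_\infty^c$ on $\partial E_S$, and differ from $Q_c$ by $O(\rho^{2-n})$. I would first try
\[
\overline u := Q_c + B\rho^{2-n}-\delta\rho^{-n} + K
\]
near $\Omega$, glued to $Q_c + B'\rho^{2-n}$. This is a supersolution by the Lemma \ref{lem:farfield} computation for $\rho$ large. Near $\partial\Omega$, I would simply use $Q_{c'}$ with $c'$ large. If the gluing is not directly available, I would fall back on the viscosity solution of Theorem \ref{thm:viscosity} as the comparison function, after checking that it dominates $u_\infty^c$ on $\partial E_S$ by the asymptotic estimates.
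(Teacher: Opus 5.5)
Your overall route matches the paper's proof. You solve the annular problems \eqref{bounded} with $\underline u_c$ as subsolution, use the $S$-independent Hessian bound of Proposition \ref{prop:uniform_C2_annular} with Evans--Krylov and Schauder to extract a locally convergent subsequence, sandwich the limit, and get uniqueness from Lemma \ref{lem:comparison}. The one step you leave open is Step~2, which you call the main obstacle. It is not an obstacle: $Q_c$ itself is the upper barrier, and this is what the paper uses.

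Indeed, $Q_c$ meets all three requirements you list.
\begin{itemize}
\item It is a classical solution, since $D^2Q_c=A\in\mathcal A_\Theta$.
\item On $\partial\Omega$ it satisfies $Q_c\ge\underline u_c=\varphi$. This was verified at the end of the construction of $\underline u_c$, and in any case it holds once $c>\max_{\partial\Omega}(\varphi-Q_0)$.
\item On $\partial E_S$ it satisfies $Q_c\ge u_\infty^c=u_S$ by Lemma \ref{lem:farfield}.
\end{itemize}
Comparison on the bounded domain $\Omega_S$ then gives $\underline u_c\le u_S\le Q_c$, and this passes to the limit. Since $\underline u_c=u_\infty^c$ for $\rho\ge 2R_c$,
\[
0\le Q_c-u\le Q_c-u_\infty^c=B_c\rho^{2-n}-\delta_c\rho^{-n}=O(|x|^{2-n}),
\]
which is exactly the paper's argument.

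Several of your tentative alternatives would not work as stated, so drop them.
\begin{itemize}
\item $Q_c+B'\rho^{2-n}$ is not a supersolution by the computation of Lemma \ref{lem:farfield}. Since $L_A\rho^{2-n}=0$, the second-order Taylor remainder has no sign. It is the $-\delta\rho^{-n}$ correction that makes $u_\infty^+$ a supersolution, and only for $\rho\ge R_*$, so that function cannot be used near $\Omega$.
\item The inequality $\Delta u_S\ge 0$ is subharmonicity in the $x$-variables, not with respect to $L_A$. Harmonic comparison with the quadratically growing data $u_\infty^c$ on $\partial E_S$ gives no $S$-independent $O(|x|^{2-n})$ control.
\item The viscosity solution of Theorem \ref{thm:viscosity} is built from a different subsolution. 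Its ordering relative to $u_\infty^c$ on $\partial E_S$ is not available without extra work, as you yourself noted.
\end{itemize}
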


\begin{proof}
	Let $c$ be so large that the smooth global subsolution $\underline u_c$ exists.
	Let $S_j \to \infty$ with $S_j > 2R_c$. By the Evans--Krylov theorem and Schauder estimates, together with the inner boundary estimates (Lemma \ref{lem:inner_boundary}) and the outer boundary estimates (Lemma \ref{lem:outer_boundary}), there exist a subsequence, still denoted $u_{S_j}$, and a function
	$$
	u \in C^\infty(\mathbb{R}^n\setminus\Omega)
	$$
	such that $u_{S_j} \to u$ in $C^m(K)$ for every compact set $K \Subset \mathbb{R}^n\setminus\Omega$ and every $m \ge 0$. Passing to the limit gives
	$$
	\sum_{i=1}^n \arctan \lambda_i(D^2u) = \Theta \quad \text{in } \mathbb{R}^n\setminus\Omega,
$$
	and
	$$
	u = \varphi \quad \text{on } \partial \Omega.
	$$
	Moreover, the inequality $\underline u_c \le u_S \le Q_c$ passes to the limit:
	$$
	\underline u_c \le u \le Q_c \quad \text{in }\mathbb{R}^n\setminus\Omega.
	$$
	Since $\underline u_c = u_\infty^c$ for $\rho \ge 2R_c$, we obtain for large $|x|$,
$$
	0 \le Q_c(x) - u(x) \le Q_c(x) - u_\infty^c(x)
	= B_c \rho(x)^{2-n} - \delta_c \rho(x)^{-n}.
	$$
	Because $\rho \sim |x|$, this gives
$$
	u(x) = Q_c(x) + O(|x|^{2-n}), \qquad |x| \to \infty.
$$
	Existence follows from the limiting argument above. Uniqueness follows from the comparison principle (Lemma \ref{lem:comparison}).
\end{proof}

\section{Applications to quadratic Hessian equations in dimension three}

We shall use the following algebraic form of the special Lagrangian equation: 
\begin{equation}
	\label{eq:algebraic-SLE}
	\cos\Theta
	\sum_{0\le 2k+1\le n}
	(-1)^k \sigma_{2k+1}(\lambda(D^2u))
	-
	\sin\Theta
	\sum_{0\le 2k\le n}
	(-1)^k \sigma_{2k}(\lambda(D^2u))
	=
	0.
\end{equation}
In dimension $3$, this becomes
$$
\cos\Theta
(\sigma_1-\sigma_3)
-
\sin\Theta
(1-\sigma_2)
=
0.
$$
Therefore, if $\Theta=\pi/2$, then
$$
\sigma_2=1,
$$
while if $\Theta=\pi$, then
$$
\sigma_1=\sigma_3.
$$
In dimension $4$, the algebraic form is
$$
\cos\Theta
(\sigma_1-\sigma_3)
-
\sin\Theta
(1-\sigma_2+\sigma_4)
=
0.
$$
Thus, if $\Theta=\pi$, then
$$
\sigma_3=\sigma_1.
$$

	\begin{corollary}
		\label{cor:3d-sigma2}
		Let $\Omega\subset\mathbb R^3$ be a bounded uniformly convex $C^{1,1}$ domain, and let
		$\varphi\in C^0(\partial \Omega)$ be uniformly semiconvex with respect to $\partial \Omega$.
		Let $A\in\mathrm{Sym}(3)$ satisfy
		$$
		\sigma_2(\lambda(A))=1,
		$$
		and let $b\in\mathbb R^3$. Then there exists $c_*>0$ such that for every
		$c>c_*$, the exterior Dirichlet problem
		$$
		\begin{cases}
			\sigma_2(D^2u)=1, & x\in\mathbb{R}^n\setminus\overline\Omega,\\[1ex]
			u=\varphi, & x\in\partial \Omega,
		\end{cases}
		$$
		admits a unique viscosity solution $u\in C^0(\mathbb{R}^n\setminus\Omega)$ satisfying
		$$
		u(x)=\frac12 x^{T}Ax+b\cdot x+c+O(|x|^{-1}),\qquad |x|\to\infty.
		$$
		If $\Omega$ is smooth and strictly convex and
		$\varphi\in C^{\infty}(\partial \Omega)$, then the solution $u\in C^{\infty} (\mathbb{R}^n\setminus\Omega)$
	.
	\end{corollary}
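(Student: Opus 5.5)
The plan is to reduce the corollary to Theorems \ref{thm:viscosity} and \ref{thm:smooth} with $n=3$ through the algebraic form \eqref{eq:algebraic-SLE}. For $n=3$ it reads
$$
\tan\Big(\sum_{i=1}^3\arctan\lambda_i\Big)=\frac{\sigma_1-\sigma_3}{1-\sigma_2}.
$$
Hence $\sigma_2(\lambda)=1$ holds exactly when $\sum_i\arctan\lambda_i\equiv \pi/2 \pmod \pi$. Since the phase lies in $(-3\pi/2,3\pi/2)$, this means $\sum_i\arctan\lambda_i\in\{-\pi/2,\pi/2\}$. Also $\sigma_1^2\ge 3\sigma_2=3$, so $\sigma_1\neq 0$. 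The level set $\{\sigma_2=1\}$ therefore splits into two disjoint components, $\{\sigma_2=1\}\cap\Gamma_2$ and $\{\sigma_2=1\}\cap(-\Gamma_2)$. I will show that these are exactly the phase sets $\{\Theta=\pi/2\}$ and $\{\Theta=-\pi/2\}$. This follows by continuity of the phase along each connected component together with the test points $\lambda=(1,1,0)$ and $-(1,1,0)$. Consequently a given $A$ with $\sigma_2(\lambda(A))=1$ lies in $\mathcal A_{\pi/2}$ or in $\mathcal A_{-\pi/2}$, according to the sign of $\operatorname{tr}A$.

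For existence I fix this $\Theta=\pm\pi/2$, which lies in $(-3\pi/2,3\pi/2)$, and apply Theorem \ref{thm:viscosity} with $n=3$, so that $2-n=-1$. This gives $c_*$ and, for $c>c_*$, a unique viscosity solution $u$ of the special Lagrangian equation with phase $\Theta$ and the required asymptotics. I then check that $u$ is a viscosity solution of $\sigma_2=1$ (without admissibility, $\sigma_2$ is read through $\lambda(D^2\cdot)$ of smooth test functions). If a $C^2$ test function $P$ touches $u$ from above at $x_0$, then $\sum_i\arctan\lambda_i(D^2P)\le\Theta$ there. I want to deduce $\sigma_2(D^2P)\ge 1$ or the appropriate one-sided inequality. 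Here I would use that near the relevant branch the map $\lambda\mapsto \sigma_2$ is monotone along the positive cone. Concretely, on $\Gamma_2$ the SLE operator with $\Theta=\pi/2$ and $\sigma_2$ define the same sub/super-level sets in a neighbourhood of the level set. I will show this by writing $\tan$ of the phase minus $\pi/2$ in terms of $(\sigma_2-1)/(\sigma_1-\sigma_3)$ and controlling the sign of $\sigma_1-\sigma_3$ on the branch. The smooth statement in the case $\Theta=\pi/2$ is exactly the critical case $(n-2)\pi/2$ of Theorem \ref{thm:smooth}, and a classical solution of the SLE there satisfies $\sigma_2(D^2u)=1$ pointwise.

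The case $\Theta=-\pi/2$ for the smooth part is not covered by Theorem \ref{thm:smooth}. The substitution $u\mapsto -u$ maps it to $\Theta=\pi/2$ with data $-\varphi$, but it reverses the sign of $c$, so it only produces solutions for $c<-c_*$. I would therefore either state the smooth conclusion for the $\Gamma_2$ branch only, or use $u\mapsto -u$ and record the resulting range of $c$ explicitly.

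The main obstacle is uniqueness among all viscosity solutions of $\sigma_2=1$ when no admissibility is assumed, because $\sigma_2$ is not degenerate elliptic on all of $\mathrm{Sym}(3)$. My approach is to show that any viscosity solution with the asymptotics of $A$ has its Hessian confined, in the viscosity sense, to the same component ($\Gamma_2$ or $-\Gamma_2$) as $A$. This should follow because the two components are separated ($\sigma_1\ne0$) and the sign is fixed at infinity by $A$. Such a solution is then a viscosity solution of the SLE with phase $\Theta$, and Lemma \ref{lem:comparison} gives uniqueness.
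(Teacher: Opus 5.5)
For $\operatorname{tr}A>0$ your argument coincides with the paper's. The paper's proof is just the substitution $\Theta=\pi/2$ into the algebraic form, so that $\sigma_2=1$ and $\pi/2=(n-2)\pi/2$ is the critical phase for $n=3$, followed by Theorems \ref{thm:viscosity} and \ref{thm:smooth} with decay $|x|^{2-n}=|x|^{-1}$. Your branch analysis is correct: $\sigma_2(\lambda(A))=1$ iff $A\in\mathcal A_{\pi/2}\cup\mathcal A_{-\pi/2}$, according to the sign of $\operatorname{tr}A$. It makes explicit what the paper tacitly assumes, namely $\operatorname{tr}A>0$.

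The genuine gap is the branch $\operatorname{tr}A<0$. There you take viscosity existence for every $c>c_*$ from Theorem \ref{thm:viscosity} at $\Theta=-\pi/2$. Your own remark that $u\mapsto -u$ reverses the sign of $c$ already shows this cannot be right. The negative branch is the critical problem for $v=-u$ with constant $-c$, and that problem has a threshold from \emph{below} in $-c$.

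Concretely, take $\Omega=B_1$, $\varphi=0$, $b=0$, $A=-3^{-1/2}I$. By uniqueness (Lemma \ref{lem:comparison}) any solution is radial. Then $v=-u=f(|x|)$ solves $\sigma_2=1$ on the $\Gamma_2$ branch, and a radial viscosity solution reduces to the ODE. This forces $f'>0$ and $(rf'^2)'=r^2$, i.e.\ $f'=\sqrt{r^2/3+\beta/r}$ with $\beta\ge-1/3$. Hence the asymptotic constant of $v$ satisfies
\[
-c\;\ge\;c_{\min}:=-\frac{1}{2\sqrt3}+\int_1^\infty\Big(\sqrt{s^2/3-1/(3s)}-\frac{s}{\sqrt3}\Big)\,ds>-\infty ,
\]
so for $c>-c_{\min}$ there is no solution at all, smooth or viscosity.

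So the corollary as stated is false on the $-\Gamma_2$ branch, and Theorem \ref{thm:viscosity} cannot be used at $\Theta=-\pi/2$. Its Perron step (Lemma \ref{lem:perron}) attains $\varphi$ only from below. An upper barrier at $\partial\Omega$ (a harmonic majorant) needs subsolutions to be subharmonic, and Lemma \ref{lem:yuan}(iv) gives that only for $\Theta\ge(n-2)\pi/2$. You should restrict the statement to $\operatorname{tr}A>0$, or record $c<-c_*$ on the other branch via $u\mapsto -u$, rather than claim the same conclusion on both branches.

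Two smaller points.

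First, matching sub- and super-level sets ``in a neighbourhood of the level set'' is not enough for viscosity tests, since test Hessians are arbitrary. What you need is the global identity $\{\sum_i\arctan\lambda_i\ge\pi/2\}=\{\lambda\in\Gamma_2:\sigma_2(\lambda)\ge1\}$ together with its strict version. This says exactly that viscosity solutions of the SLE at $\pi/2$ are the viscosity solutions of $\sigma_2=1$ in the standard $\Gamma_2$-elliptic sense.

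Second, with that definition uniqueness is just Lemma \ref{lem:comparison}. Your ``confinement of the Hessian to one component'' step, for which you give no mechanism, is then unnecessary. Without a branch convention it is not even meaningful. A classical solution $u$ already fails the naive supersolution test with $P=u-\frac t2|x-x_0|^2$, because $\sigma_2(D^2u-tI)=\sigma_2(D^2u)-2t\,\sigma_1(D^2u)+3t^2\to\infty$.
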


\begin{corollary}
	\label{cor:3d-bounded-sigma2}
	Let $\Omega\subset\mathbb R^3$ be a bounded smooth domain, and let
	$\varphi\in C^\infty(\partial\Omega)$. Assume that there exists a smooth
	subsolution $\underline u\in C^\infty(\overline\Omega)$ satisfying
$$
	\begin{cases}
		\sigma_2(D^2\underline u)\ge 1, & x\in\Omega,\\[1ex]
		\underline u=\varphi, & x\in\partial\Omega.
	\end{cases}
$$
	Then the Dirichlet problem
$$
	\begin{cases}
		\sigma_2(D^2u)=1, & x\in\Omega,\\[1ex]
		u=\varphi, & x\in\partial\Omega,
	\end{cases}
$$
	admits a unique smooth solution $u\in C^\infty(\overline\Omega)$.
\end{corollary}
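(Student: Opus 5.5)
The plan is to reduce the corollary to Theorem \ref{thm:main} with $n=3$, whose critical phase is $(n-2)\pi/2=\pi/2$. The bridge is the algebraic form \eqref{eq:algebraic-SLE}: for $n=3$ and $\Theta=\pi/2$ it reads $\sigma_2=1$. Two things must be checked. First, the subsolution condition $\sigma_2(D^2\underline u)\ge 1$ must be turned into the phase inequality \eqref{sub}, possibly after a sign change. Second, a smooth solution of the critical special Lagrangian equation must solve $\sigma_2=1$, and conversely for uniqueness.

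\textbf{Step 1: the branches of $\{\sigma_2\ge1\}$.} For $\lambda\in\mathbb R^3$ we have $\sigma_1^2\ge 3\sigma_2$, so $\sigma_2\ge1$ forces $|\sigma_1|\ge\sqrt3$. Thus $\{\sigma_2\ge 1\}$ splits into the two disjoint closed convex pieces $\Gamma_+=\{\sigma_2\ge1,\sigma_1>0\}$ and $\Gamma_-=-\Gamma_+$. The first is a convex subset of the Garding cone $\Gamma_2$.

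Along $\Gamma_+$, write $\Psi(\lambda)=\sum\arctan\lambda_i\in(-3\pi/2,3\pi/2)$. Since $\cos\Psi\cdot\prod_i(1+\lambda_i^2)^{1/2}=1-\sigma_2$, the interior $\{\sigma_2>1,\sigma_1>0\}$ contains no point with $\Psi\equiv\pi/2 \pmod \pi$. The interior is connected and contains points with $\Psi$ close to $3\pi/2$ (take $\lambda=s(1,1,1)$ with $s\to\infty$). Hence $\Psi\in(\pi/2,3\pi/2)$ there, and by continuity $\Psi\ge\pi/2$ on $\Gamma_+$. At $\lambda=3^{-1/2}(1,1,1)$ we get $\Psi=\pi/2$ exactly, and the same argument on the sheet $\{\sigma_2=1,\sigma_1>0\}$ gives $\Psi\equiv\pi/2$ there. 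Conversely, $\Psi\ge\pi/2$ gives $\sigma_1\ge0$ by Lemma \ref{lem:yuan}(iv), and $\cos\Psi\le 0$ gives $\sigma_2\ge 1$. So
$$
\{\Psi\ge\pi/2\}=\Gamma_+,\qquad \{\Psi=\pi/2\}=\{\sigma_2=1,\ \sigma_1>0\}.
$$

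\textbf{Step 2: normalize and apply Theorem \ref{thm:main}.} Since $\lambda(D^2\underline u)$ is continuous on the connected set $\overline\Omega$ and never leaves $\Gamma_+\cup\Gamma_-$, it stays in one piece. If it lies in $\Gamma_-$, replace $(\underline u,\varphi)$ by $(-\underline u,-\varphi)$; this is harmless because $\sigma_2$ is even under $\lambda\mapsto-\lambda$. Then Step 1 shows that $\underline u$ satisfies \eqref{sub} with $n=3$. Theorem \ref{thm:main} then yields a unique $u\in C^\infty(\overline\Omega)$ with $\sum\arctan\lambda_i(D^2u)=\pi/2$ and $u=\varphi$ on $\partial\Omega$, and Step 1 gives $\sigma_2(D^2u)=1$. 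Undoing the sign change if it was used gives the desired solution.

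\textbf{Step 3: uniqueness, the main obstacle.} Let $v$ be any smooth solution of $\sigma_2=1$ with the same data. As in Step 2, $\sigma_1(D^2v)$ has a constant sign on $\overline\Omega$. If that sign matches the branch of $\underline u$, then $v$ solves the critical special Lagrangian equation, and uniqueness in Theorem \ref{thm:main} (or Lemma \ref{lem:comparison}) gives $v=u$.

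The delicate case is a solution $v$ on the opposite branch. I would first try to exclude it by comparison. In the normalized situation, $\sigma_1(D^2v)<0$ makes $v$ superharmonic, while $\Delta\underline u>0$; both facts are to be combined with the common boundary values. If this fails to give a contradiction, the natural reading of the corollary is uniqueness within the admissible branch singled out by $\underline u$. That is exactly what the reduction to the special Lagrangian equation provides, and I would state it that way.
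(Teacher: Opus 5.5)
Your Steps 1--2 are correct and follow the route the paper intends. The paper gives no separate proof of this corollary; it only records that for $n=3$, $\Theta=\pi/2$ the algebraic form reduces to $\sigma_2=1$, which amounts to invoking Theorem \ref{thm:main}. Your branch analysis supplies two things that one-line reduction leaves out. First, $\sigma_2(D^2\underline u)\ge 1$ yields the phase inequality \eqref{sub} only after normalizing the sign of $\Delta\underline u$. Second, $\{\sum\arctan\lambda_i=\pi/2\}$ is exactly the sheet $\{\sigma_2=1,\ \sigma_1>0\}$, while the other sheet of $\{\sigma_2=1\}$ carries phase $-\pi/2$. This gives existence.

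In Step 3, drop the attempt to rule out the opposite branch by comparison. That branch cannot be excluded, and the uniqueness claim of the corollary is false as literally stated. Take $\Omega=B_r\subset\mathbb R^3$, $\varphi\equiv 0$, and $u_\pm(x)=\pm\frac{1}{2\sqrt3}(|x|^2-r^2)$. Then $D^2u_\pm=\pm 3^{-1/2}I$, so $\sigma_2(D^2u_\pm)=1$, and both functions vanish on $\partial B_r$. Moreover $\underline u:=u_+$ satisfies the hypothesis, so this problem has two distinct smooth solutions. More generally, $u\mapsto -u$ sends solutions with data $\varphi$ to solutions with data $-\varphi$, since $\sigma_2$ is even. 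Your proposed comparison only gives $u_+\le h\le u_-$ for the harmonic extension $h$ of $\varphi$, which is no contradiction.

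The correct statement is uniqueness among smooth solutions $u$ with $\Delta u$ of the same sign as $\Delta\underline u$, equivalently with $\lambda(D^2u)$ in the same component of $\{\sigma_2\ge1\}$ as $\lambda(D^2\underline u)$. That is precisely what your reduction to Theorem \ref{thm:main} proves. State this restriction outright, and note that it is needed, rather than presenting it as a fallback.
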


	\subsection*{Acknowledgments}
This work was partially supported by NSFC $12171389$ and NSFC $11801015$.

\end{document}